\crefname{enumi}{}{}
\crefname{equation}{}{}
\def\@tocline#1#2#3#4#5#6#7{\relax
\ifnum #1>\c@tocdepth 
\else
\par \addpenalty\@secpenalty\addvspace{#2}%
\begingroup \hyphenpenalty\@M
\@ifempty{#4}{%
\@tempdima\csname r@tocindent\number#1\endcsname\relax
}{%
\@tempdima#4\relax
}%
\parindent\z@ \biglskip#3\relax \advance\biglskip\@tempdima\relax
\bigrskip\@pnumwidth plus4em \parfillskip-\@pnumwidth
#5\leavevmode\hskip-\@tempdima
\ifcase #1
\or\or \hskip 1em \or \hskip 2em \else \hskip 3em \fi%
#6\nobreak\relax
\dotfill\hbox to\@pnumwidth{\@tocpagenum{#7}}\par
\nobreak
\endgroup
\fi}
\newtheorem{theorem}{Theorem}
\newtheorem*{theorem*}{Theorem}
\newtheorem{proposition}{Proposition}[section]
\newtheorem{lemma}[proposition]{Lemma}
\newtheorem{corollary}[proposition]{Corollary}
\theoremstyle{definition}
\newtheorem{definition}[proposition]{Definition}
\newtheorem{remark}[proposition]{Remark}
\numberwithin{equation}{section}
\newcommand{\R}{\mathbb{R}}
\newcommand{\B}{\mathbf}
\renewcommand{\L}{\mathsf{L}}
\newcommand{\ud}{\, \mathrm{d}}
\newcommand{\diag}{\mathrm{diag}}
\def \R {\mathbb {R}}
\def \N {\mathbb {N}}
\def \Z {\mathbb {Z}}
\def \T {\mathbb{T}}
\def\supp{\operatorname{supp}}
\renewcommand{\tilde}{\widetilde}
\DeclarePairedDelimiter\abs{\lvert}{\rvert}
\DeclarePairedDelimiter\norm{\lVert}{\rVert}
\begin{document}
\title[Galerkin, Hermite and kinetic Fokker Planck equations]{
A Galerkin type method for kinetic
\\
Fokker Planck equations based
\\
on Hermite expansions}

\address{Benny Avelin \\Department of Mathematics, Uppsala University\\
S-751 06 Uppsala, Sweden}
\email{benny.avelin@math.uu.se}

\address{Mingyi Hou\\Department of Mathematics, Uppsala University\\
S-751 06 Uppsala, Sweden}
\email{mingyi.hou@math.uu.se}

\address{Kaj Nystr\"{o}m\\Department of Mathematics, Uppsala University\\
S-751 06 Uppsala, Sweden}
\email{kaj.nystrom@math.uu.se}

\author{Benny Avelin, Mingyi Hou and Kaj Nystr{\"o}m}

\begin{abstract}
  In this paper, we develop a Galerkin-type approximation, with quantitative error estimates, for weak solutions to the Cauchy problem for kinetic Fokker-Planck equations in the domain $(0, T) \times D \times \R^d$, where $D$ is either $\T^d$ or $\R^d$. Our approach is based on a Hermite expansion in the velocity variable only, with a hyperbolic system that appears as the truncation of the Brinkman hierarchy, as well as ideas from \cite{albrittonVariationalMethodsKinetic2021a} and additional energy-type estimates that we have developed. We also establish the regularity of the solution based on the regularity of the initial data and the source term.


  \medskip

  \noindent
  {\it Keywords and phrases: Kinetic Fokker-Planck operator, Hermite expansion, Galerkin approximation, hyperbolic system, Brinkman hierarchy, energy estimates, weak solution, Cauchy problem, regularity.}
\end{abstract}

\maketitle

\setcounter{equation}{0} \setcounter{theorem}{0}


\section{Introduction and statement of main results}
The kinetic Fokker-Planck equation is
\begin{align}\label{eq:kfp}
  \partial_t p = \nabla_v\cdot(\nabla_v p + vp) - v\cdot\nabla_x p + \nabla_x U(x)\cdot \nabla_v p,
\end{align}
where $x\in\R^d$ represents position, $v\in\R^d$ represents velocity, $t$ is time, and $U(x)$ is a potential that depends only on $x$. Subject to smoothness and growth conditions on $U(x)$, this equation describes the probability density $p(t,x,v)$ of the stochastic process $(x, v) = (x(t), v(t))$  where
\begin{align*}
  \begin{cases}
    \ud x = v \ud t, \\
    \ud v = - (v +\nabla_x U(x))\ud t + \sqrt{2}\ud B_t,
  \end{cases}
\end{align*}
and where  and $B_t$ is a standard Brownian motion in $\R^d$. It is well-known that
\begin{equation*}
  \rho(x, v) := e^{-\abs{v}^2/2-U(x)}
\end{equation*}
is the stationary solution of \cref{eq:kfp}. Defining
\begin{equation*}
  u(t,x,v):=p(t,x,v)/\rho(x, v),
\end{equation*}
it follows that $u$ satisfies the  conjugated form of \cref{eq:kfp}, i.e.
\begin{align}\label{eq:kfpconj}
  \partial_t u = \mathcal{L} u, \, \mathcal{L}:= (\Delta_v -v\cdot \nabla_v ) + (\nabla_x U\cdot\nabla_v - v\cdot \nabla_x ).
\end{align}
The operators in \cref{eq:kfp,eq:kfpconj} are sometimes referred to as Kramers-Smolu{-}chowski-Fokker-Planck operators.
The kinetic Fokker-Planck operator appears naturally in various fields of kinetic theory and statistical physics, including plasma physics, condensed matter physics, and more recently, in machine learning and in the optimization of deep neural networks using the method of stochastic gradient descent with momentum.

The purpose of this paper is to contribute to the study of Cauchy problems for the kinetic Fokker-Planck equation by constructing Galerkin-type approximations of weak solutions to the problem in \cref{ivp1} with quantitative error estimates. More precisely, we study the Cauchy problem
\begin{equation}\label{ivp1}
  \begin{cases}
    \partial_t u - \mathcal{L}u = f  &\mbox{in } (0,T)\times\Omega,\\
    \hfill u = g  &\mbox{on } \{t=0\}\times\Omega,
  \end{cases}
\end{equation}
where $u=u(t,x,v)$, $T\in (0,\infty)$, $\Omega = D\times\R^d$, and either $D=\T^d$, the $d$ dimensional torus, or $D=\R^d$.
In \cref{ivp1}, $g(x,v)$ is the initial data, and $f(t,x,v)$ is a given source term.
As we will explain, we restrict our attention to $D=\T^d$ and $D=\R^d$ for several reasons. One reason is the current lack of understanding regarding the existence and, in particular, uniqueness of weak solutions to the Cauchy-Dirichlet problem for the operator in \cref{eq:kfpconj}.

Various versions of the problem in \cref{ivp1} have been extensively studied from both theoretical and numerical perspectives over the past few decades. Notably, in  \cite{villaniHypocoercivity2009},  Villani established the existence and exponential decay to equilibrium, under assumptions on the potential $U=U(x)$. In  \cite{villaniHypocoercivity2009}, Villani also introduced the notion of {hypocoercivity} which essentially amounts to redeeming the initial lack of coercivity of the hypoelliptic operators at hand, by introducing auxiliary norms in which solutions are proved to dissipate and quantitative decay estimates can be deduced. 
An extension of the hypercoercivity framework for superlinear growth was studied by Dolbeault, Mouhot and Schmeiser in \cite{dolbeault2015hypocoercivity}.
In \cite{herauIsotropicHypoellipticityTrend2004a},  Herau and Nier gave explicit estimates of the rate of decay to the equilibrium using hypoelliptic techniques and by making the connection to the Witten Laplacian and its spectrum and spectral gaps.
Spectral gaps were further studied in a series of papers by Herau et al., including \cite{herauSemiclassicalAnalysisKramersFokkerPlanck2005,herauTunnelEffectKramersFokkerPlanck2008,herauTunnelEffectSymmetries2011a}.
In these papers the authors used semiclassical analysis to study the relationship between small eigenvalues and minimums of the potential $U$.
More recently, Bony et al.~\cite{bony2022eyringkramers} generalized the results from \cite{herauTunnelEffectSymmetries2011a} to general Fokker-Planck type operators admitting a Gibbs stationary distribution and established an Eyring-Kramers formula for the eigenvalues of the operator.
Different from the approaches mentioned above, Albritton et al.~\cite{albrittonVariationalMethodsKinetic2021a} proved the existence of weak solutions in $\T^d$ and exponential decay in an $\L^2$-setting using a variational approach based on energy methods.
Cao et al.~\cite{caoExplicitConvergenceRate2023} extended the results of \cite{albrittonVariationalMethodsKinetic2021a} to the whole space and developed a new method for determining the rate of decay to equilibrium in the $\L^2$ norm. Their approach involves expressing the estimate in terms of the Poincaré constant of the underlying space.

The work presented in this paper is inspired by the approach developed in \cite{albrittonVariationalMethodsKinetic2021a}, and our idea has been to develop Galerkin-type approximations, with quantitative error estimates, of weak solutions to the problem in \cref{ivp1} in the framework of \cite{albrittonVariationalMethodsKinetic2021a}.
In the case $D=\T^d$, our starting point is an ansatz for the solution to the problem in \cref{ivp1} in the form of a spectral expansion using Hermite polynomials in the velocity variable only, an approach which in the one-dimensional case was introduced by Risken in \cite{riskenFokkerplanckEquationMethods1996}. Through this ansatz we are led to an infinite system of equations, originally discovered in \cite{brinkmanBrownianMotionField1956}, in the  $(t,x)$ variables only.
By truncating this system or hierarchy of equations, a method that dates back to Grad \cite{gradKineticTheoryRarefied1949} and which is often referred to as the Brinkman hierarchy, we obtain a hyperbolic system  \cref{eq:hyperbolic_intro+} which plays a central role in our paper.
A byproduct of our approach and energy estimates, is that they yield regularity estimates for the solution that depend on the regularity of the initial data and the source term.
It is worth noting that a well-posedness result for the hyperbolic system was given in \cite{meyerCommentsGradProcedure1983b} and hypocoercivity estimates in \cite{blaustein2023discrete}. 
However, a novelty in our approach is that we use a vanishing viscosity method to prove the well-posedness of the hyperbolic system and to obtain the corresponding energy estimates. In addition, our approach gives an alternative proof of the existence and uniqueness of weak solutions to \cref{ivp1} established in \cite{albrittonVariationalMethodsKinetic2021a} in the case $D=\T^d$. Furthermore, compare to \cite{meyerCommentsGradProcedure1983b} and the numerical papers mentioned below,  we are able to quantify to what extent the (truncated) hyperbolic system approximates the Cauchy problem \cref{ivp1}  and we deduce error estimates depending essentially only on dimension $d$, the level of truncation, the potential $U$, the initial data and the source term, see \cref{Thm1}. In the case $D=\R^d$, we prove, under additional restrictions on the potential $U(x)$, that we can approximate the solution in  $\R^d\times\R^d$, by a series of bounded periodic solutions which each can be computed by the Galerkin method developed, see \cref{Thm2}. Furthermore,  also in this case we prove  the existence and uniqueness of weak solutions to \cref{ivp1}.

Concerning the literature devoted to numerical schemes to the problem in \cref{ivp1}, 
Risken developed a matrix continued-fraction method in \cite{riskenFokkerplanckEquationMethods1996} based on the spectral expansion.
In \cite{fokCombinedHermiteSpectralfinite2002a}, Fok et al.~gave detailed explicit and implicit discretization schemes for the hyperbolic system \cref{eq:hyperbolic_intro+} in the case $d=1$.
A similar numerical scheme for the periodic case can be found in \cite{chaiMixedGeneralizedHermiteFourier2018}.
In addition, for the Fokker-Planck-Landau equation, the nonlinear version of equation \cref{ivp1}, a numerical scheme based on Hermite expansion can be found in \cite{liHermiteSpectralMethod2021a}.
Numerical analysis for various other types of kinetic equations, e.g.~Boltzmann equation, using the Galerkin method can be found in e.g.~\cite{sarnaConvergenceAnalysisGrad2020,schmeiserConvergenceMomentMethods1998,gambaGalerkinPetrovApproach2018} and references therein.
In particular, in \cite{sarnaConvergenceAnalysisGrad2020}, the authors analyze strong solutions to linear kinetic equations in spatially bounded domains using Hermite polynomials and obtain $\L^2$ error estimates for the approximation which depends on the regularity of the solution. 
However, in the current paper, we obtain explicit $\mathrm{H}^k$ error estimates for Galerkin approximations of weak solutions to the kinetic Fokker-Planck equation that only depend on the regularity of the initial data and source term, see \cref{thm1:error}. 

Finally, we also note that different spectral methods may be used to solve the kinetic Fokker-Planck equations, for example, a generalized Laguerre-Legendre spectral method can be found in \cite{guoCompositeGeneralizedLaguerreLegendre2009a}. In addition, we refer to \cite{pavliotisStochasticProcessesApplications2014a} for a good introduction to Monte Carlo methods and the simulation of probability densities. 

\subsection{Function spaces and weak solutions} 
Points in $\R\times\R^d\times\R^d$ will be denoted by $(t,x,v)$ where  $x\in \R^d$ is position, $v\in\R^d$ is velocity, and $t$ is time. We introduce  Gibbs measures $\mu$ and $\eta$ on $\R^d$,
\begin{align}\label{measures}
  \ud \mu = \ud\mu(v) = e^{-\abs{v}^2/2}\ud v, \quad \ud \eta=\ud\eta(x) = e^{-U(x)}\ud x,
\end{align}
and we will consistently assume that
$e^{-{U(x)}}\in \mathrm{L}^1(\R^d,\ud x)$.

Our function spaces are defined with respect to the measures $\mu$ and $\eta$ introduced in \cref{measures}. Given $D \subset \R^d$, and $\zeta\in\{\mu,\eta\}$,
we let $\L^2_\zeta(D)$ be the Hilbert space with inner product
\begin{equation*}
  (u,w)_{\L^2_\zeta}:=\int_D u w \, \ud \zeta,
\end{equation*}
and norm
\begin{equation*}
  \|u\|_{\L^2_\zeta} := (u,u)_{\L^2_\zeta}^{1/2}=\bigl ( \int_D u^2 \, \ud \zeta \bigr )^{1/2}.
\end{equation*}
We let $\mathrm{H}^1_\zeta(D)$ be the first order Sobolev space with inner product
\begin{equation*}
  ( u,w )_{\mathrm{H}^1_\zeta}:=(u,w)_{\L^2_\zeta}+(\nabla_v u ,\nabla_v w )_{\L^2_\zeta},
\end{equation*}
and with norm
\begin{equation*}
  \|u\|_{\mathrm{H}^1_\zeta} := \bigl(\|u\|^2_{\L^2_\zeta} + \|\nabla u\|^2_{\L^2_\zeta}\bigr)^{1/2}.
\end{equation*}
We also let $(\cdot,\cdot)_{\mathrm{H}^{-1}_\zeta, \mathrm{H}^1_\zeta}$ denote the pairing between $\mathrm{H}^{-1}_\zeta $ and $\mathrm{H}^1_\zeta$ where $\mathrm{H}^{-1}_\zeta$ is the dual space of $\mathrm{H}^1_\zeta$.
It is well known that under rather weak conditions on the tail of the measure $\eta$, the spaces $\L^2_\zeta(\R^d)$, $\mathrm{H}^1_\zeta(\R^d)$, $\L^2_\zeta(\T^d)$, $\mathrm{H}^1_\zeta(\T^d)$ are the closures of the space of compactly supported smooth functions, $C_0^\infty(\R^d)$ and $C_0^\infty(\T^d)$ in the corresponding norms, see
e.g.~\cite{adamsSobolevSpaces1975a,tolleUniquenessWeightedSobolev2012b}.
The spaces $\mathrm{H}^k_\mu(\R^d)$ and $\mathrm{H}^k_\eta(\T^d)$, for $k\geq 2$, are defined analogously. 

Given an arbitrary Banach space $X$ we define the Bochner space $\L^2_\zeta(D;X)$ to be the set of all measurable functions $f:D \to X$ such that
\begin{equation*}
  \|f\|_{\L^2_\zeta(D;X)} := \bigl ( \int_D \|f\|_X^2 \, \ud\zeta \bigr )^{1/2} < \infty.
\end{equation*}
The space, $\L^2((0,T);X)$ is defined analogously, and hence, for instance, the space $\L^2((0,T);\L^2_\eta(\T^d;\L^2_\mu(\R^d)))$ has a well-defined meaning.
Given $D\subset\R^d$, and $T>0$, we will, if unambigous, drop either some or all of the domains of definition in the Bochner spaces $\L^2(\L^2_\eta(\mathrm{H}^{1}_\mu))=\L^2((0,T);\L^2_\eta (D;\mathrm{H}^{1}_\mu(\R^d)))$ and
$\L^2(\L^2_\eta(\mathrm{H}^{-1}_\mu))=\L^2((0,T);\L^2_\eta (D;\mathrm{H}^{-1}_\mu(\R^d)))$, here the subscripts $\eta$ and $\mu$ indicated the measure with respect to which integration is performed. 

Furthermore, as in \cite{albrittonVariationalMethodsKinetic2021a}, given $D \subset \R^d$ and $T>0$, we define $\mathrm{H}^1_{\text{kin}}:=\mathrm{H}^1_{\text{kin}}((0,T)\times D\times \R^d)$ as the set
\begin{align*}
  \left \{u:u \in \L^2(\L^2_\eta(\mathrm{H}^{1}_\mu)), \partial_t u + v\cdot\nabla_x u -\nabla_x U\cdot \nabla_v u \in \L^2(\L^2_\eta(\mathrm{H}^{-1}_\mu)) \right \}.
\end{align*}
We remark that, equipped with the norm
\begin{align*}
  \|u\|_{\mathrm{H}^1_{\text{kin}}}:= \|u\|_{\L^2(\L^2_\eta(\mathrm{H}^1_\mu))}
  +
  \|\partial_t u + v\cdot\nabla_x u -\nabla_x U\cdot \nabla_v u\|_{\L^2(\L^2_\eta (\mathrm{H}^{-1}_\mu))},
\end{align*}
$\mathrm{H}^1_{\text{kin}}$ is a Banach space.

\begin{definition}\label{def:weak}
  Given a domain $D \subset \R^d$ and $T > 0$, if $g\in \L^2_\eta(D;\allowbreak \L^2_\mu)$ and $f\in \L^2((0,T);\L^2_\eta(D;\mathrm H^{-1}_\mu))$, we say that a function $u\in \mathrm{H}^1_{\mathrm{kin}}$ is a weak solution to \cref{ivp1} if
  \begin{equation*}
    \underset{(0,T) \times D}{\iint} (\nabla_v u, \nabla_v \varphi)_{\L^2_\mu}\ud\eta\ud t
    =
    \underset{(0,T) \times D}{\iint} (f-\partial_t u- v\cdot\nabla_x u +\nabla_x U\cdot\nabla_v u, \varphi )_{\mathrm{H}^{-1}_\mu,\mathrm{H}^1_\mu}\ud\eta\ud t,
  \end{equation*}
  holds for all $\varphi\in \L^2((0,T);\L^2_\eta(D;\mathrm{H}^1_\mu))$ and if
  \begin{equation*}
    u(0,x,v) = g(x,v)\mbox{ for }(x,v)\in D\times\R^d.
  \end{equation*}
\end{definition}

\begin{remark}
  The initial condition in \cref{def:weak} is well-defined as any function $u\in  \mathrm{H}^1_{\mathrm{kin}}((0,T)\times D\times \R^d)$ can be identified with an element of
  $C([0,T];\allowbreak \L^2_\eta(D;\allowbreak \L^2_\mu(\R^d)))$, see \cite[Lemma 6.12]{albrittonVariationalMethodsKinetic2021a}.
\end{remark}
To formulate our main results we also introduce, for $k\geq 1$, $D \subset \R^d$, and $T > 0$, the function space $\mathrm{H}^k_{t,x,v}:=\mathrm{H}^k_{t,x,v}((0,T)\times D\times\R^d) $
which
is defined as
\begin{multline*}
  \mathrm{H}^k_{t,x,v}((0,T)\times D\times\R^d):=\big\{u: \partial_t^i\nabla_x^j\nabla_v^{2l} u \in \L^2(L^2_\eta(\L^2_\mu)), \\
  \forall i,j,k \in \N \text{ s.t. } i+j+l \leq k \big\}.
\end{multline*}
We equip this space with the norm
\begin{equation*}
  \norm{ u }^2_{\mathrm{H}^k_{t,x,v}}:= \sum_{i+j+l = 0}^k \norm{\partial_t^i \nabla_x^j \nabla_v^{2l} u}^2_{\L^2(\L^2_\eta(\L^2_\mu))}.
\end{equation*}
The spaces $\mathrm{H}^k_{t,x}:=\mathrm{H}^k_{t,x}((0,T)\times D)$ and $\mathrm{H}^k_{x,v}:=\mathrm{H}^k_{x,v}(D\times\R^d)$ are defined analogously.

We finally remark that, if $U(x) \in C^\infty(\T^d)$, then
\begin{equation*}
  \mathrm{H}^k_{t,x,v}((0,T)\times\T^d\times\R^d)\subset \mathrm{H}^1_{\text{kin}}((0,T)\times \T^d\times \R^d),
\end{equation*}
for all $k\geq 1$, but this is not the case if $\T^d$ is replaced by $\R^d$.

\subsection{Statement of main results} 
Our main results concern the geometric situations $D=\T^d$ and $D=\R^d$.

In the case $D=\T^d$, we consider the problem in \cref{ivp1} and the ansatz
\begin{align} \label{eq:ansatz}
  u_m(t,x,v) = \sum_{\abs{\alpha}=0}^m c_m^\alpha(t,x)\Psi_\alpha (v),
\end{align}
where $\{\Psi_\alpha(v)\}$ are normalized Hermite polynomials in $\R^d$ introduced in \cref{eq:hermite}, with the coefficients $\{c_m^\alpha\}$ satisfying the hyperbolic system stated in \cref{eq:hyperbolic_intro+}. In this case our main result can be stated as follows.
\begin{theorem}\label{Thm1}
  Let $D=\T^d$, $T > 0$, and let $k \in \Z$ be such that $k\geq2$. If we assume that
  \begin{equation}\label{eq:assump:um1}
    \begin{cases}
      U(x) \in C^\infty(\T^d),\ g\in \mathrm{H}^{k}_{x,v}(\T^d\times\R^d),\ f\in \mathrm{H}^{k}_{t,x,v}((0,T)\times\T^d\times\R^d). \\
      \nabla_x^{k+1} g\in \L^2_\eta(\T^d;\L^2_\mu(\R^d)), \\
      \partial_t^i \nabla_x^{j} f \in \L^2((0,T); \L^2_\eta(\T^d;\L^2_\mu(\R^d))), \text{ whenever } i+j =k+1,
    \end{cases}
  \end{equation}
  then there exists, for each $m\in\Z_+$, a unique function
  \begin{equation*}
    u_m\in\mathrm{H}^k_{t,x,v}((0,T)\times\T^d\times\R^d)
  \end{equation*}
  of the form \cref{eq:ansatz} with $\{c_m^\alpha\}$ satisfying the hyperbolic system \cref{eq:hyperbolic_intro+}. Also, there exists a constant
  $C=C(d,T,\norm{\nabla_x^2 U}_\infty,\dots, \norm{\nabla_x^{k+1}U}_\infty,k) \geq 1$, such that
  \begin{equation*}
    \norm{u_m}^2_{\mathrm{H}^k_{t,x,v}} \leq C (\norm{g}^2_{\mathrm{H}^k_{x,v}} + \norm{f}^2_{\mathrm{H}^k_{t,x,v}}).
  \end{equation*}
  Furthermore, after passing to a subsequence,
  \begin{equation*}
    u_m\rightharpoonup u\in \mathrm{H}^{k}_{t,x,v}((0,T)\times\T^d\times\R^d),
  \end{equation*}
  where $u$ is the unique weak solution to \cref{ivp1} in the sense of \cref{def:weak}, and 
  {
    for all $0\leq l\leq k-2$,
    \begin{equation}\label{thm1:error}
      \norm{u-u_m}^2_{\mathrm{H}^l_{t,x,v}} \leq C \left(\frac{1}{(1+m)^{(k-l-2)+\frac{1}{2}}} \right)^2 \bigl( \norm{g}^2_{\mathrm{H}^k_{x,v}} + \norm{f}^2_{\mathrm{H}^k_{t,x,v}}\bigr),
    \end{equation}
    where $C = C(d,T,\norm{\nabla_x^2 U}_\infty,\dots, \norm{\nabla_x^{k+1}U}_\infty,k) \geq 1$.
    }
\end{theorem}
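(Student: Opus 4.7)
My plan is to work first at the level of the Hermite coefficients and reduce the problem to a first-order hyperbolic system in $(t,x)$ on $\T^d$. Substituting the ansatz \cref{eq:ansatz} into \cref{ivp1} and using the standard recursion relations for the normalized Hermite polynomials (so that $v_i$ and $\partial_{v_i}$ act as raising/lowering shifts on $\alpha$, up to explicit $\sqrt{\alpha_i}$-type factors), the operator $\mathcal{L}$ decomposes into a diagonal damping of order $|\alpha|$ from the $v$-Fokker--Planck part, a transport coupling between levels $|\alpha|$ and $|\alpha|\pm 1$ from $v\cdot\nabla_x$, and a coupling between $|\alpha|$ and $|\alpha|\pm 1$ from $\nabla_x U\cdot\nabla_v$. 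After truncation at $|\alpha|\leq m$, the higher-level couplings are dropped and one recovers the hyperbolic system \cref{eq:hyperbolic_intro+} with source terms $(f,\Psi_\alpha)_{\L^2_\mu}$ and initial data $(g,\Psi_\alpha)_{\L^2_\mu}$.

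\textbf{Well-posedness and regularity via vanishing viscosity.} To solve this first-order system I would add $\varepsilon\Delta_x c_m^\alpha$ to each equation and observe that the resulting strictly parabolic system is well-posed for $\varepsilon>0$ by standard semigroup/Galerkin methods on the torus. The crucial step is deriving $\varepsilon$-uniform a priori estimates in $\mathrm{H}^k_{t,x,v}$. Testing the equation for $c_m^\alpha$ against itself and summing over $|\alpha|\leq m$, the off-diagonal transport couplings cancel pairwise by antisymmetry, the $\nabla_x U$-couplings are absorbed by the diagonal damping via Cauchy--Schwarz and $U\in C^\infty(\T^d)$, and the diagonal part produces the natural Hermite dissipation. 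Differentiating the truncated system $i$ times in $t$ and $j$ times in $x$ with $i+j\leq k$, and iterating, gives control of all $\partial_t^i\nabla_x^j$ norms; the $\nabla_v^{2l}$-components with $l\geq 1$ in the definition of $\mathrm{H}^k_{t,x,v}$ are recovered by converting $v$-derivatives into Hermite shifts on $\alpha$, using that $|\alpha|\leq m$ enters as a weight bounded by the diagonal damping. The $(k+1)$-order data assumption supplies exactly the extra regularity needed to close the top-level estimate, since the truncation residual at $|\alpha|=m$ loses one $x$-derivative. Sending $\varepsilon\to 0$ yields existence; uniqueness follows from the same energy estimate applied to the difference of two solutions.

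\textbf{Passage to the weak limit and identification.} The $m$-uniform bound above, together with the identity obtained by rewriting the truncated system as $\partial_t u_m + v\cdot\nabla_x u_m - \nabla_x U\cdot\nabla_v u_m = \Delta_v u_m - v\cdot\nabla_v u_m + \Pi_m f + R_m$, where $R_m$ is the top-level truncation residual, controls $u_m$ in $\mathrm{H}^1_{\text{kin}}$. Extracting a weakly convergent subsequence $u_m\rightharpoonup u$ in $\mathrm{H}^k_{t,x,v}$, I would identify $u$ as a weak solution in the sense of \cref{def:weak} by testing against functions of the form $\Psi_\beta(v)\varphi(t,x)$: for $m\geq |\beta|+1$ the corresponding projected equation holds exactly up to a residual that vanishes in the limit, and density of Hermite-polynomial test functions in $\L^2((0,T);\L^2_\eta(\mathrm{H}^1_\mu))$ lets me pass to the limit. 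Uniqueness of the limit in $\mathrm{H}^1_{\text{kin}}$ is the standard energy identity for \cref{ivp1}, and implies convergence of the whole sequence.

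\textbf{Error estimate --- the main obstacle.} The delicate point is the rate $(1+m)^{-(2k-3)}$. I would split $u-u_m = (u-\Pi_m u) + (\Pi_m u - u_m)$, where $\Pi_m$ is the Hermite projection onto levels $|\alpha|\leq m$. The truncation error $u-\Pi_m u$ is controlled by Hermite-coefficient decay: the regularity bound on $u$ gives effective $v$-smoothness of order roughly $2k$ (the factor $2$ reflecting the hypoelliptic scaling in the definition of $\mathrm{H}^k_{t,x,v}$), which translates via the spectral characterization of $v$-Sobolev norms relative to $\mu$ into $\sum_{|\alpha|>m}\|c^\alpha\|^2 \leq C m^{-2k}(\|g\|^2_{\mathrm{H}^k_{x,v}}+\|f\|^2_{\mathrm{H}^k_{t,x,v}})$. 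The Galerkin residual $\Pi_m u - u_m$ solves the truncated hyperbolic system driven only by the coupling from the dropped level $|\alpha|=m+1$ into $|\alpha|=m$; integrating this by the basic energy estimate introduces an additional loss that accounts for the drop from $2k$ to $2k-3$ in the exponent, arising from a factor of $m^{1/2}$ built into the Hermite shift, the $\mathrm{H}^{-1}_\mu$ versus $\L^2_\mu$ trade at the top level, and the time integration. Carefully bookkeeping these Hermite index weights --- in particular commuting $v\cdot\nabla_x$ with $\Pi_m$ and tracking how $t,x$-derivatives trade against $v$-regularity under the hypoelliptic scaling --- is what I expect to be the main technical obstacle; executed properly, it produces \cref{thm1:error}.
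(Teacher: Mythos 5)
Your proposal follows the paper's route almost exactly through the construction of the hyperbolic system, the vanishing-viscosity well-posedness, the $m$-uniform $\mathrm{H}^k_{t,x,v}$ energy estimates (including the key points: skew-symmetric cancellation of the transport couplings, the lower-order commutator with the Hermite weight $(I+A)^s$, and the need for $(k+1)$-order data because the top-level residual costs one $x$-derivative), and the identification of the weak limit. The only genuine divergence is in the error estimate. You split $u-u_m=(u-\Pi_m u)+(\Pi_m u-u_m)$ and propose to estimate the Galerkin residual $\Pi_m u-u_m$ as a solution of the truncated system forced by the dropped level-$(m+1)$ coupling; the paper instead works directly with $w_m=u-u_m$, which satisfies $(\partial_t-\mathcal{L})w_m=-E_m+(f-f_m)$, tests against $w_m$, and applies Gr\"onwall. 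The two are mirror images (your forcing term is the level-$(m+1)$ coupling seen from $\Pi_m u$, the paper's $E_m$ is the same coupling seen from $u_m$), and your version does deliver the same rate; the paper's version is slightly cleaner because $E_m$ is controlled by the \emph{already established} regularity of $u_m$ via \cref{lem:Em}, with no need to commute $v\cdot\nabla_x$ with $\Pi_m$ or to invoke an $\mathrm{H}^{-1}_\mu$/$\L^2_\mu$ trade. On that point your bookkeeping of the exponent is a bit off: the drop from $2k$ to $2k-3$ has nothing to do with time integration or a duality trade; it is exactly one power of $(1+m)$ from the squared Hermite shift factor $\alpha_i+1\sim m$ at the top level, plus two powers because the residual involves $\nabla_x c_m^\alpha$, so only $\mathrm{H}^{2(k-1)}_\mu$ of $v$-regularity (rather than $\mathrm{H}^{2k}_\mu$) is available from the $\mathrm{H}^k_{t,x,v}$ bound once one $x$-derivative is spent.
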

\begin{remark}
In the case where $l=0$ and $k=2$, we observe that the decay of the error $\|u-u_m\|_{\L^2}$ is $1/\sqrt{m}$. To understand why this is the case, note first that the natural regularity required to achieve boundedness of the difference is to have control over $\|\nabla_v \nabla_x u\|_{\L^2(\L_\eta^2(\L^2_\mu))}$, as $(\partial_t - \mathcal{L}) (u-u_m)$ contains only the $m+1$-th term in the Hermite expansion of the mixed derivative terms $\nabla_v \nabla_x u$. From this, we can see that estimating the error is equivalent to estimating the decay of the expansion coefficients. We can bound the decay of the tail by the $\mathrm{H}^2_{t,x,v}$ norm of $u$ with a rate of $1/\sqrt{m}$, which is, in this sense, sharp. See \cref{rmk:sharp} for a more detailed discussion.

We also remark that the proof can be modified to obtain fractional regularity, i.e. $k$ can be fractions s.t. $k-l > 3/2$, and the definition of $\mathrm{H}^k_{t,x,v}$ should be changed correspondingly.
\end{remark}

Using \cref{Thm1} and an approximation argument, we can also deduce the following well-posedness result.
\begin{corollary}\label{cor:l2initial}
  Let $D=\T^d$, $T > 0$ and assume that
  \begin{equation*}
    U(x)\in C^\infty(\T^d),\ g\in \L^2_\eta(\T^d; \L^2_\mu(\R^d)),\ f \in \L^2((0,T); \L^2_\eta(\T^d;\L^2_\mu(\R^d))).
  \end{equation*}
  Then there exists a unique weak solution $u\in \mathrm{H}^1_{\mathrm{kin}}((0,T)\times\T^d\times\R^d)$ to the initial value problem in \cref{ivp1}, and there exists a constant $C = C(d,\allowbreak T,\allowbreak \norm{\nabla_x^2 U}_\infty,\allowbreak \norm{\nabla_x^{3}U}_\infty) \geq 1$, such that
  \begin{align*}
    \norm{u}_{\mathrm{H}^1_{\mathrm{kin}}} &\leq C\bigl( \norm{g}_{\L^2_\eta(\L^2_\mu)} +
    \norm{f}_{\L^2(\L^2_\eta(\L^2_\mu))}\bigr).
  \end{align*}
\end{corollary}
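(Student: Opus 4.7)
The plan is to combine \cref{Thm1} (applied with $k=2$) with a density approximation in the data, together with the natural dissipation identity for the equation, which provides an $\mathrm{H}^1_{\mathrm{kin}}$ bound for smooth solutions depending only on the $\L^2$ norms of the data. This reduces the corollary to an a priori estimate and a compactness argument.

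First I would pick $g_n\in C^\infty(\T^d\times\R^d)$ and $f_n \in C^\infty([0,T]\times\T^d\times\R^d)$, each with rapid decay in $v$, such that the hypotheses \cref{eq:assump:um1} are verified with $k=2$, and with $g_n \to g$ in $\L^2_\eta(\L^2_\mu)$ and $f_n \to f$ in $\L^2(\L^2_\eta(\L^2_\mu))$. A standard mollification followed by a Gaussian-compatible velocity cutoff accomplishes this. \cref{Thm1} then produces $u_n \in \mathrm{H}^2_{t,x,v}$ solving \cref{ivp1} with data $(g_n,f_n)$.

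For the key uniform bound I would multiply the equation for $u_n$ by $u_n$ and integrate against $\ud\eta\ud\mu$: the transport piece $(v\cdot\nx u_n - \nx U\cdot\nv u_n, u_n)_{\L^2_\eta(\L^2_\mu)}$ vanishes because the transport vector field preserves the Gibbs measure $e^{-U-\abs{v}^2/2}$, yielding
\begin{align*}
  \frac{1}{2}\frac{d}{dt}\nrm{u_n(t)}^2_{\L^2_\eta(\L^2_\mu)} + \nrm{\nv u_n(t)}^2_{\L^2_\eta(\L^2_\mu)} = (f_n(t), u_n(t))_{\L^2_\eta(\L^2_\mu)}.
\end{align*}
Integration in time with Young's inequality controls $\sup_{t}\nrm{u_n(t)}_{\L^2_\eta(\L^2_\mu)} + \nrm{\nv u_n}_{\L^2(\L^2_\eta(\L^2_\mu))}$ by the $\L^2$ norms of $g_n,f_n$. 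Reading the equation as $\dt u_n + v\cdot\nx u_n - \nx U\cdot\nv u_n = \Delta_v u_n - v\cdot\nv u_n + f_n$, the right-hand side is bounded in $\L^2(\L^2_\eta(\mathrm{H}^{-1}_\mu))$ by $\nrm{\nv u_n}_{\L^2(\L^2_\eta(\L^2_\mu))} + \nrm{f_n}_{\L^2(\L^2_\eta(\L^2_\mu))}$, closing the uniform $\mathrm{H}^1_{\mathrm{kin}}$ estimate.

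Weak compactness extracts $u_n\rightharpoonup u$ in $\mathrm{H}^1_{\mathrm{kin}}$ along a subsequence; linearity lets us pass to the limit in the weak formulation of \cref{def:weak}; the initial datum is recovered via the continuous embedding $\mathrm{H}^1_{\mathrm{kin}} \hookrightarrow C([0,T]; \L^2_\eta(\L^2_\mu))$ combined with $g_n\to g$; and weak lower semi-continuity yields the claimed bound on $u$. Uniqueness follows by applying the same energy identity to the difference of two solutions with $g=f=0$. The main obstacle I expect is the rigorous justification of the energy identity and of the initial-condition matching at the level of $\mathrm{H}^1_{\mathrm{kin}}$ solutions, since a generic weak solution is not smooth in $t$; this is handled either by a Steklov-in-time regularisation exploiting $\dt u + v\cdot\nx u - \nx U\cdot\nv u \in \L^2(\L^2_\eta(\mathrm{H}^{-1}_\mu))$, or, more in line with the paper's philosophy, by performing the identity on the smooth Galerkin truncations $u_{n,m}$ first and passing to the limits $m\to\infty$ and then $n\to\infty$. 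The auxiliary construction of $(g_n,f_n)$ meeting the strong hypothesis \cref{eq:assump:um1} while preserving $\L^2$-convergence is routine but requires care with the Gaussian weight.
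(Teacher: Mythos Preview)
Your proposal is correct and follows essentially the same route as the paper: approximate $(g,f)$ by smooth data, apply \cref{Thm1} to obtain smooth solutions, derive the $\mathrm{H}^1_{\mathrm{kin}}$ a priori bound from the energy identity (transport term vanishes, then read off the $\mathrm{H}^{-1}_\mu$ bound on the kinetic derivative from the equation), and pass to a weak limit, with uniqueness from the same energy estimate applied to the difference. If anything, you are more explicit than the paper about the justification of the energy identity at the $\mathrm{H}^1_{\mathrm{kin}}$ level for the uniqueness step.
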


{Let now $Q_R$ denote the cube with sidelength $2R$ centered at the origin. We consider the following family of truncated problems with periodic boundary conditions
\begin{align}\label{eq:ivp4intro}
  \begin{cases}
    (\partial_t - \mathcal{L}_R)u_R = f & \text{ in } (0,T)\times Q_R\times\R^d,\\
    \hfill u_R = g & \text{ on } \{t=0\}\times Q_R\times \R^d,
  \end{cases}
\end{align}
where $\mathcal{L}_R$ is the operator $\mathcal{L}$ from \cref{eq:kfpconj} with the potential $U$ replaced by $U_R (x) := U(x)\varphi_R(x)$, where $\chi_{Q_{R/2}} \leq \varphi_R \leq \chi_{Q_R}$ is a smooth cutoff function. Further details can be found in \cref{sec:global}.
We now state our next main result valid for $D = \R^d$.
}
\begin{theorem}\label{Thm2}
  Let $D=\R^d$ and $T > 0$, assume that $U(x)$ has the form
  \begin{equation}\label{assump:global:potential}
    U(x) = a |x|^2 + P(x),\ a\in \R_+,\ P(x)\in C_0^\infty(\R^d),
  \end{equation}
  and  that
  \begin{equation}\label{assump:global:potential+}
    g\in C_0^\infty(\R^{2d}),\ f\in C^\infty_0([0,T]\times\R^{2d}),
  \end{equation}
  where $C_0^\infty(X)$ denotes the space of all compactly supported smooth functions on $X$.
  Then there exists an $R_0 = R_0(U, g, f)$ such that for $R>R_0$,
  and after passing to a subsequence,
  \begin{equation*}
    u_R\rightharpoonup u\in \mathrm{H}^1_{\mathrm{kin}}((0,T)\times\R^d\times\R^d) \textrm{ as } R\to \infty,
  \end{equation*}
  where 
  $u_R$ solves \cref{eq:ivp4intro} and
  $u$ is the unique weak solution to \cref{ivp1}.
\end{theorem}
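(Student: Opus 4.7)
The plan is to prove \cref{Thm2} by a periodization argument, reducing the whole-space Cauchy problem to a sequence of torus problems of growing period, each of which is handled by \cref{Thm1,cor:l2initial}. Fix $R_0$ so that $\supp g$, $\supp f$, and $\supp P$ all lie in the ball $B_{R_0}$. For each $R>2R_0$, construct via a smooth cutoff $\psi_R$ equal to $1$ on $B_R$ and supported in $B_{2R}$ a smooth $(4R)$-periodic potential $U_R$ on the rescaled torus $\mathbb{T}^d_{4R}$ that coincides with $U(x)=a|x|^2+P(x)$ on $B_R$. A direct computation shows that $\|\nabla_x^k U_R\|_{L^\infty}$ is bounded \emph{independently} of $R$ for every $k\geq 2$: the quadratic part $a|x|^2$ has bounded second derivatives and vanishing derivatives of order $\geq 3$, while cutoff cross-terms are of size $R^{2-k}$, which remains bounded for $k\geq 2$. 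Since $g$ and $f$ are compactly supported, they embed in the fundamental domain of $\mathbb{T}^d_{4R}$ without modification.

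Next, apply \cref{Thm1} on $(0,T)\times\mathbb{T}^d_{4R}\times\R^d$ with potential $U_R$ to obtain, for each $m$, an explicit Galerkin solution $u_m^R$ of the form \cref{eq:ansatz}. Let $v_R$ denote the weak limit as $m\to\infty$; by \cref{cor:l2initial} this $v_R$ is the unique weak solution on the torus. Use the error bound \cref{thm1:error} to pick $m=m_R\to\infty$ so that $u_{m_R}^R - v_R \to 0$ in $\L^2(\L^2_\eta(\L^2_\mu))$, and set $u_R:=u_{m_R}^R$; these are the explicit smooth periodic functions of the statement. The uniform control on $\nabla_x^k U_R$ combined with \cref{cor:l2initial} yields a constant $C$, independent of $R$, such that $\|v_R\|_{\mathrm{H}^1_{\mathrm{kin}}} \leq C(\|g\|_{\L^2_\eta(\L^2_\mu)}+\|f\|_{\L^2(\L^2_\eta(\L^2_\mu))})$.

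Then pass to the limit. View each $u_R$ as a function on $(0,T)\times\R^d\times\R^d$: any compact set $K$ sits inside the fundamental domain of $\mathbb{T}^d_{4R}$ for $R$ large. A diagonal extraction and weak compactness in $\mathrm{H}^1_{\mathrm{kin}}$ yield a subsequence with $u_R\rightharpoonup u$. For any test function $\varphi\in C^\infty_c$, once $R$ is large enough $\supp\varphi$ lies where $U_R\equiv U$; passing to the limit in the weak formulation of \cref{def:weak} shows that $u$ is a weak solution with data $(g,f)$.

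The main obstacle is uniqueness on $\R^d$. Given two weak solutions $u,u'$ with identical data, the difference $w$ formally satisfies
\[
\tfrac{d}{dt}\|w\|^2_{\L^2_\eta(\L^2_\mu)} = -2\|\nabla_v w\|^2_{\L^2_\eta(\L^2_\mu)},
\]
but justifying this on $\R^d$ requires testing with $w\chi_R(x)$ for a spatial cutoff $\chi_R$ and controlling the cross-term $\int v\cdot\nabla_x\chi_R\,w^2\,d\eta\,d\mu$. Here the assumption $U(x)=a|x|^2+P(x)$ is essential: the Gaussian decay of $d\eta=e^{-U(x)}\,dx$, together with the $\mathrm{H}^1_{\mathrm{kin}}$ membership of $w$, controls the tail integrand by an integrable majorant, so the cutoff error vanishes as $R\to\infty$. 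A standard Gronwall argument then gives $w\equiv 0$, completing the proof.
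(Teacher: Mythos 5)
Your overall architecture (periodize the potential, solve on tori of growing period via \cref{Thm1}, extract a weak limit) matches the paper's, but you have skipped the step that is actually hard, and it is a genuine gap. The uniform bounds you obtain from \cref{cor:l2initial} on the torus control $\partial_t u_R + v\cdot\nabla_x u_R - \nabla_x U_R\cdot\nabla_v u_R$, i.e.\ the transport operator built from the \emph{truncated} potential $U_R$. To conclude that the limit $u$ lies in $\mathrm{H}^1_{\mathrm{kin}}((0,T)\times\R^d\times\R^d)$ — which \cref{def:weak} requires — and that $u_R$ converges to a weak solution of \cref{ivp1}, you must control the discrepancy $\nabla_x(U-U_R)\cdot\nabla_v u_R$ in $\L^2((0,T);\L^2_\eta(\L^2_\mu))$ uniformly in $R$. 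Since $\nabla_x U\sim 2ax$ is unbounded, this is a weighted estimate of the form $\| |x|\,|\nabla_v u_R| \|_{\L^2_\eta(\L^2_\mu)}$, and it does \emph{not} follow from the $\mathrm{H}^2_{t,x,v}$ bounds on $u_R$; the paper states this obstruction explicitly. Your claim of ``weak compactness in $\mathrm{H}^1_{\mathrm{kin}}$'' therefore has no justification as written, and testing only against compactly supported $\varphi$ does not rescue it, because membership of $u$ in $\mathrm{H}^1_{\mathrm{kin}}$ (with the full potential $U$) is part of the definition of weak solution. The paper closes this gap with \cref{prop:global:entropy}: the entropy inequality $ab\leq a\log a + a + e^b$ combined with the logarithmic Sobolev inequality for the Gaussian-type measure $\ud\zeta = Z_\zeta^{-1}\ud\eta\ud\mu$ yields
\begin{align*}
  \iint |\nabla_v u_R|^2\,|\nabla_x(U-U_R)|^2 \,\ud\zeta \lesssim \frac{1}{\delta}\bigl(\|\nabla_x\nabla_v u_R\|^2_{\L^2_\zeta}+\|\nabla_v\nabla_v u_R\|^2_{\L^2_\zeta}+\|\nabla_v u_R\|^2_{\L^2_\zeta}\bigr)\iint e^{\delta|\nabla_x(U-U_R)|^2}\,\ud\zeta,
\end{align*}
and the exponential moment is bounded uniformly in $R$ for small $\delta$ because $|\nabla_x(U-U_R)|\lesssim|\nabla_x U|$ and $e^{\delta C|x|^2}$ is $\zeta$-integrable. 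This is the key idea your proposal is missing; without it (or a substitute weighted estimate) the proof does not close.

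Two smaller points. First, you also need the analogue of the paper's \cref{lem:global:energy} to transfer $\L^2_{\eta_R}(Q_R)$ bounds on periodic functions to $\L^2_\eta(\R^d)$ bounds with an $R$-independent constant; you use this implicitly but never state or prove it. Second, you identify uniqueness on $\R^d$ as ``the main obstacle,'' but within the $\mathrm{H}^1_{\mathrm{kin}}$ framework of \cite{albrittonVariationalMethodsKinetic2021a} the energy identity is available for weak solutions directly (elements of $\mathrm{H}^1_{\mathrm{kin}}$ are admissible test functions and lie in $C([0,T];\L^2_\eta(\L^2_\mu))$), so uniqueness is the routine part; the difficulty you should have flagged is the one above.
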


Using \cref{Thm2} and an approximation argument, we can also deduce the following  well-posedness result.
\begin{corollary}\label{cor:global:l2initial}
  Let $D$, $U$, and $T$ be as in \cref{Thm2}. If $g\in \L^2_\eta(\R^d;\L^2_\mu(\R^d))$, and $f \in \L^2((0,T); \L^2_\eta(\R^d;\L^2_\mu(\R^d)))$,
  then there exists a unique weak solution $u\in \mathrm{H}^1_{\mathrm{kin}}((0,T)\times\R^d\times\R^d)$ to the initial value problem \cref{ivp1}.
\end{corollary}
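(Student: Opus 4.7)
The plan is to obtain the corollary from \cref{Thm2} by density, paralleling the derivation of \cref{cor:l2initial} from \cref{Thm1} in the periodic case. I would first choose sequences $\{g_n\}\subset C_0^\infty(\R^{2d})$ and $\{f_n\}\subset C_0^\infty([0,T]\times\R^{2d})$ with $g_n\to g$ in $\L^2_\eta(\L^2_\mu)$ and $f_n\to f$ in $\L^2(\L^2_\eta(\L^2_\mu))$. Since $U$ satisfies \cref{assump:global:potential}, \cref{Thm2} applies and supplies, for each $n$, a weak solution $u_n\in\mathrm{H}^1_{\mathrm{kin}}$ of \cref{ivp1} with data $(g_n,f_n)$.

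The heart of the argument is the a priori estimate
\[
\|u\|_{\mathrm{H}^1_{\mathrm{kin}}}\le C\bigl(\|g\|_{\L^2_\eta(\L^2_\mu)}+\|f\|_{\L^2(\L^2_\eta(\L^2_\mu))}\bigr)
\]
valid for any weak solution $u$ of \cref{ivp1}. To prove it, I would test the weak formulation of \cref{def:weak} with $\varphi=u$. Using the identification of $\mathrm{H}^1_{\mathrm{kin}}$-functions with elements of $C([0,T];\L^2_\eta(\L^2_\mu))$ (the remark after \cref{def:weak}), together with the antisymmetry of the Hamiltonian transport $v\cdot\nabla_x-\nabla_x U\cdot\nabla_v$ for the Gibbs measure $\eta\otimes\mu$, I would derive the energy identity
\[
\tfrac{1}{2}\tfrac{d}{dt}\|u(t)\|_{\L^2_\eta(\L^2_\mu)}^2+\|\nabla_v u(t)\|_{\L^2_\eta(\L^2_\mu)}^2=\langle f(t),u(t)\rangle_{\mathrm H^{-1}_\mu,\mathrm H^1_\mu}.
\]
Cauchy--Schwarz, Young and Gronwall then control $u$ in $\L^\infty_t(\L^2_\eta(\L^2_\mu))\cap\L^2(\L^2_\eta(\mathrm H^1_\mu))$, and the transport part $\partial_t u+v\cdot\nabla_x u-\nabla_x U\cdot\nabla_v u$ is controlled in $\L^2(\L^2_\eta(\mathrm H^{-1}_\mu))$ by reading the equation and using the $\nabla_v u$ bound, producing the claimed $\mathrm{H}^1_{\mathrm{kin}}$ estimate.

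By linearity, applying this estimate to $u_n-u_m$ (a weak solution with data $(g_n-g_m,f_n-f_m)$) shows that $\{u_n\}$ is Cauchy in $\mathrm{H}^1_{\mathrm{kin}}$, hence converges to some $u\in\mathrm{H}^1_{\mathrm{kin}}$; passing to the limit in the weak formulation and in the $t=0$ trace (again via the continuous embedding into $C([0,T];\L^2_\eta(\L^2_\mu))$) shows that $u$ is a weak solution with data $(g,f)$. Uniqueness is immediate from the same a priori estimate applied to the difference of two weak solutions. The main obstacle will be justifying the energy identity rigorously in the whole-space setting: the antisymmetry of the Hamiltonian transport is formal and requires that integration by parts against the weights produces no boundary contribution. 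The quadratic growth of $U$ in \cref{assump:global:potential} gives Gaussian decay of $\eta$, and I would make the computation rigorous by running it first on the smooth periodic approximants $\{u_R\}$ produced inside \cref{Thm2}, where periodicity eliminates boundary terms, and then passing to the weak limit.
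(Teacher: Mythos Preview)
Your proposal is correct and matches what the paper intends: the paper does not spell out a proof of this corollary, only remarking that it follows from \cref{Thm2} ``and an approximation argument,'' and your argument is the natural analogue of the paper's detailed proof of \cref{cor:l2initial} in the periodic case. The only minor deviation is that you pass to the limit via a Cauchy-sequence argument in $\mathrm{H}^1_{\mathrm{kin}}$, whereas the paper (for \cref{cor:l2initial}) extracts a weakly convergent subsequence from the uniform $\mathrm{H}^1_{\mathrm{kin}}$ bound; both routes work, and your Cauchy approach has the advantage of giving strong convergence directly, at the cost of needing the energy identity for weak solutions rather than just for the smooth approximants---which, as you note, is exactly the point requiring care in the whole-space setting and is handled by the $\mathrm{H}^1_{\mathrm{kin}}$ trace lemma cited after \cref{def:weak}.
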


\subsection{Outline of the proofs} \label{sec:outline}
To prove \cref{Thm1} we deduce that
\begin{equation*}
  (\partial_t - \mathcal{L}) u_m = f_m + {E}_m
\end{equation*}
where $f_m$ is a finite dimensional projection of $f$ and ${E}_m$ is considered as an error term, see \cref{eq:Lum}.
We then prove that $\{u_m\}$ and $\{\partial_t u_m + v\cdot\nabla_x u_m -\nabla_x U\cdot \nabla_v u_m\}$ are uniformly bounded sequences in $\L^2((0,T);\L^2_\eta(\T^d;\mathrm{H}^1_\mu))$ and $\L^2((0,T);\allowbreak \L^2_\eta(\T^d;\allowbreak  \L^2_\mu))$ respectively, see \cref{thm:regularity}.
Furthermore, we prove that ${E}_m \to 0$ strongly in $\L^2((0,T);\L^2_\eta(\T^d; \L^2_\mu))$, see \cref{lem:Em}. Together, this allows us to conclude that
\begin{equation*}
  u_m \rightharpoonup u
\end{equation*}
in $\L^2((0,T);\L^2_\eta(\T^d; \mathrm{H}^1_\mu))$,  that
\begin{equation*}
  \partial_t u_m + v\cdot\nabla_x u_m -\nabla_x U\cdot \nabla_v u_m \rightharpoonup \partial_t u + v\cdot\nabla_x u -\nabla_x U\cdot \nabla_v u
\end{equation*}
in $\L^2((0,T);\allowbreak \L^2_\eta(\T^d;\allowbreak  \mathrm{H}^{-1}_\mu))$, and that
\begin{equation*}
  {E}_m \rightharpoonup 0
\end{equation*}
in $\L^2((0,T);\L^2_\eta(\T^d; \mathrm{H}^{-1}_\mu))$.
The error estimate in the theorem follows from the energy estimates for $u_m$, which are equivalent to the energy estimates for the coefficients $\{c_m^\alpha\}_\alpha$, and a delicate bootstrapping argument (see \cref{thm:regularity}).

To prove \cref{Thm2}, we construct a sequence of bounded periodic problems \cref{eq:ivp4intro} indexed by the truncation scale $R$, using certain cutoff functions. We denote the solutions to these problems by ${u_R}$, and after passing to a subsequence, we show that these solutions converge to a solution $u\in\mathrm{H}^{1}_\mathrm{kin}((0,T)\times\R^d\times\R^d)$ for the problem \cref{ivp1}. The proof involves using the logarithmic Sobolev inequality for the measure $\ud\eta\ud\mu$ and an entropy-type inequality from \cite{ledouxConcentrationMeasureLogarithmic1999a}.

\subsection{Organization of the paper.}
In \cref{sec:pre}, we introduce essential notations and cover preliminaries.

In \cref{sec:hsys}, we establish well-posedness and regularity estimates of the hyperbolic system that appears as the truncation of the Brinkman hierarchy.

In \cref{sec:uniformestimate}, we establish the essential energy estimates for the Galerkin approximations ${u_m}$.

In \cref{sec:bddresult}, we prove both \cref{Thm1} and \cref{cor:l2initial}.

Finally, in \cref{sec:global}, we prove \cref{Thm2} and \cref{cor:global:l2initial}.

\section*{Acknowledgments}
B.~Avelin and M.~Hou were supported by [Swedish Research Council dnr: 2019-04098]. K.~Nystr\"{o}m was supported by [Swedish Research Council dnr: 2022-03106]. The authors wish to thank the referees for their careful reading of the original manuscript which helped us to improve the presentation.

\section{Preliminaries}\label{sec:pre}

\subsection{Operators and their adjoints}  
Consider, for $i=1,...,d$, the operators $\partial_{v_i},\partial_{x_i}$. The formal adjoints of these operators in $\L^2_\mu$, $\L^2_\eta$ respectively, are
\begin{align}\label{eq:formaldual}
  \partial_{v_i}^* = -\partial_{v_i} + {v_i}, \quad \partial_{x_i}^* = -\partial_{x_i} + {\partial_{x_i}U}.
\end{align}
The corresponding nabla operators $\nabla_{v},\nabla_{v}^\ast,\nabla_{x},\nabla_{x}^\ast$ are defined in the canonical way.
Using this notation can rewrite the operator $\mathcal{L}$ in \cref{eq:kfpconj} as
\begin{equation*}
  \mathcal{L} = - \nabla_v^*\cdot\nabla_v -   \bigl( \nabla_v^*\cdot\nabla_x -\nabla_v\cdot\nabla_x^*\bigr).
\end{equation*}

\subsection{Hermite basis} 
In the case $d=1$, then the so-called Hermite polynomials form a basis for $\L^2_\mu(\R)$ as well as for $\mathrm{H}^1_\mu(\R)$. We will deal extensively with this basis in this paper and we here record some of its useful properties. The $k$-th Hermite function or polynomial is defined, see \cite{wienerFourierIntegralCertain1988},
\begin{equation*}
  h_k(v) := (-1)^k e^{v^2/2} \frac{\ud^k}{\ud v^k} e^{-v^2/2}.
\end{equation*}
We introduce the normalized Hermite functions as
\begin{equation*}
  \psi_k(v) := \frac{1}{Z_k} h_k(v),\quad Z_k = \bigl(\int_\R h_k^2(v) e^{-\frac{v^2}{2}}\ud v \bigr)^{1/2}.
\end{equation*}
We also introduce the Ornstein-Uhlenbeck operator as
\begin{align}\label{eq:ornuhl}
  K = \partial_{v}^* \partial_v = -\partial_v^2 + v\partial_v.
\end{align}
Then
\begin{equation*}
  K\psi_k = k\psi_k,
\end{equation*}
i.e.~$\{\psi_k\}$ are eigenfunctions to the  one-dimension Ornstein-Uhlenbeck operator $K$. The methodology outlined in this paper rests on the following elementary and well-known result.
We list the proof here for completeness.
\begin{lemma} \label{lem:simultaneous}
  The normalized Hermite functions $\{\psi_k\}_{k=0}^\infty$ is an \textit{orthonormal basis} in $\L^2_\mu(\R)$. Furthermore, $\{\psi_k\}_{k=0}^\infty$ is an \textit{orthogonal basis}  in $\mathrm{H}^1_\mu(\R)$.
\end{lemma}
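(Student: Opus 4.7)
My plan is to handle the four assertions (orthonormality and completeness in $\L^2_\mu(\R)$; then orthogonality and completeness in $\mathrm{H}^1_\mu(\R)$) in turn, using throughout the Rodrigues formula defining $h_k$, the adjoint relations \cref{eq:formaldual}, and the eigenfunction identity $K\psi_k = k\psi_k$. For orthonormality in $\L^2_\mu$, I would rewrite the inner product via the Rodrigues formula and integrate by parts $k$ times (assuming $j\leq k$):
\[
\int_\R h_j h_k\, e^{-v^2/2}\,\ud v = (-1)^k \int_\R h_j(v)\,\tfrac{\ud^k}{\ud v^k} e^{-v^2/2}\,\ud v = \int_\R h_j^{(k)}(v)\, e^{-v^2/2}\,\ud v,
\]
with no boundary terms, since $h_j^{(i)}$ grows polynomially while $e^{-v^2/2}$ decays super-exponentially. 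Because $h_j$ is a polynomial of degree $j$, the right-hand side vanishes when $j<k$ and equals $Z_k^2$ when $j=k$, and division by $Z_k$ gives the orthonormality of $\{\psi_k\}$.

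Completeness in $\L^2_\mu$ then reduces to density of polynomials. If $f \in \L^2_\mu$ is orthogonal to every $\psi_k$, then $f$ is orthogonal in $\L^2_\mu$ to every polynomial. Expanding $e^{i\xi v}$ as a power series and using Cauchy--Schwarz together with $f(v)e^{-v^2/2} \in \L^1(\R,\ud v)$ to justify termwise integration, one concludes that the ordinary Fourier transform of $f(v)e^{-v^2/2}$ vanishes identically, hence $f = 0$ almost everywhere.

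For orthogonality in $\mathrm{H}^1_\mu$, I compute
\[
(\psi_j,\psi_k)_{\mathrm{H}^1_\mu} = (\psi_j,\psi_k)_{\L^2_\mu} + (\partial_v \psi_j, \partial_v \psi_k)_{\L^2_\mu} = (\psi_j,\psi_k)_{\L^2_\mu} + (K\psi_j,\psi_k)_{\L^2_\mu} = (1+j)\,\delta_{jk},
\]
using $\partial_v^* \partial_v = K$ from \cref{eq:ornuhl} together with \cref{eq:formaldual} and $K\psi_k = k\psi_k$. For completeness in $\mathrm{H}^1_\mu$, given $f \in \mathrm{H}^1_\mu$ set $a_k := (f,\psi_k)_{\L^2_\mu}$, so $\sum_k a_k \psi_k \to f$ in $\L^2_\mu$. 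A direct consequence of the Rodrigues formula is the ladder identity $\partial_v \psi_k = \sqrt{k}\,\psi_{k-1}$ together with the three-term recurrence $v\psi_k = \sqrt{k+1}\,\psi_{k+1} + \sqrt{k}\,\psi_{k-1}$; combining these yields $\partial_v^* \psi_k = \sqrt{k+1}\,\psi_{k+1}$, hence
\[
(\partial_v f, \psi_k)_{\L^2_\mu} = (f,\partial_v^* \psi_k)_{\L^2_\mu} = \sqrt{k+1}\,a_{k+1}.
\]
Parseval applied to $\partial_v f \in \L^2_\mu$ then gives $\sum_k (k+1)\,a_{k+1}^2 < \infty$, so the tails $\sum_{k>N}(1+k)\,a_k^2$ vanish, the partial sums are Cauchy in $\mathrm{H}^1_\mu$, and their limit must agree with the $\L^2_\mu$ limit $f$, establishing the claim.

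The main obstacle among these steps is the density argument underlying completeness in $\L^2_\mu$: once one knows the Hermite polynomials span all polynomials, passing from ``orthogonal to every polynomial'' to ``vanishing almost everywhere'' relies on the determinacy of the Gaussian moment problem, most cleanly handled via the characteristic-function/power-series argument above. Every other piece is an essentially algebraic consequence of the Rodrigues formula and the adjoint structure in \cref{eq:formaldual}.
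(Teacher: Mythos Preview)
Your argument is correct. The computation for $\mathrm{H}^1_\mu$ orthogonality matches the paper exactly, and your self-contained treatment of $\L^2_\mu$ orthonormality and completeness simply unpacks what the paper delegates to a reference (Wiener).

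The one place you genuinely diverge from the paper is completeness in $\mathrm{H}^1_\mu$. The paper argues by orthogonal complement: if $u\in\mathrm{H}^1_\mu$ satisfies $(u,\psi_k)_{\mathrm{H}^1_\mu}=0$ for all $k$, then the same eigenfunction identity gives $(1+k)(u,\psi_k)_{\L^2_\mu}=0$, whence $u=0$ by $\L^2_\mu$ completeness. This is a two-line reduction. You instead prove convergence of the Hermite expansion directly in the $\mathrm{H}^1_\mu$ norm, using the ladder relation $\partial_v^*\psi_k=\sqrt{k+1}\,\psi_{k+1}$ and Parseval for $\partial_v f$ to obtain $\sum_k(1+k)a_k^2<\infty$. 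Your route is longer but constructive, and in fact it yields the norm identity $\|f\|_{\mathrm{H}^1_\mu}^2=\sum_k(1+k)a_k^2$ as a byproduct, which is exactly what the paper needs and states separately a few lines later. So the paper's argument is cleaner for the lemma as stated, while yours packages the lemma together with the spectral norm formula that follows it.
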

\begin{proof}
For the first conclusion, it is sufficient to note that it is well known that $\{\psi_k\}_{k=0}^\infty$ is dense in $\L^2_\mu(\R)$, see \cite[Chapter 8, Theorem 1]{wienerFourierIntegralCertain1988}, and that the functions in $\{\psi_k\}_{k=0}^\infty$ are normalized and pairwise orthogonal in $\L^2_\mu(\R)$. For the second conclusion, we note that 
\begin{align}\label{calco}
  ( \psi_k, \psi_l )_{\mathrm{H}^1_\mu} &= (\psi_k,\psi_l)_{\L^2_\mu} + (\psi_k',  \psi_l')_{\L^2_\mu}\notag \\
  &= (\psi_k,\psi_l)_{\L^2_\mu} + (\psi_k, K \psi_l)_{\L^2_\mu} = (\psi_k, (1+l)\psi_l)_{\L^2_\mu} = (1+l)\delta_{kl},
\end{align}
i.e.~$\{\psi_k\}_{k=0}^\infty$ is orthogonal in $\mathrm{H}^1_\mu(\R)$. To prove that $\{\psi_k\}_{k=0}^\infty$ is dense in $\mathrm{H}^1_\mu(\R)$, suppose that there exists $u\in \mathrm{H}^1_\mu(\R)$, not identically zero, such that  $(u,\psi_k )_{\mathrm{H}^1_\mu} = 0$ for all $k=0,1,2,\dots$.
As in \cref{calco} this implies that
\begin{equation*}
  (u, (1+k)\psi_k)_{\L^2_\mu}=0,
\end{equation*}
for all $k=0,1,2,\dots$. Therefore $u$ must be identically zero in $\L^2_\mu(\R)$ since $\{\psi_k\}_{k=0}^\infty$ is dense in $\L^2_\mu(\R)$. Hence $\{\psi_k\}_{k=0}^\infty$ is simultaneous basis for $\L^2_\mu(\R)$ and $\mathrm{H}^1_\mu(\R)$.
\end{proof}

Given $d\in\Z$, $d\geq 1$, and a multi-index $\alpha=(\alpha_1, \dots, \alpha_d)$, we introduce the Hermite basis for $\R^d$,
\begin{align}\label{eq:hermite}
  \Psi_\alpha(v) = \prod_{i=1}^d \psi_{\alpha_i}(v_i).
\end{align}
Then by construction $\norm{\Psi_\alpha}_{\L^2_\mu(\R^d)} = 1$. Furthermore, for any multi-indices $\alpha,\alpha'$, we have
\begin{equation*}
  (\Psi_\alpha,\Psi_{\alpha'})_{\L^2_\mu} = \delta_{\alpha\alpha'},
\end{equation*}
and, for each $i=1,\dots,d$,
\begin{equation*}
  \partial_{v_i}^\ast\partial_{v_i} \Psi_\alpha = \alpha_i \Psi_\alpha.
\end{equation*}
It is immediate by \cref{lem:simultaneous} that $\{\Psi_\alpha\}_{|\alpha|=0}^\infty$, where $\abs{\alpha} = \sum_{i=1}^d \alpha_i$, forms an orthonormal basis for $\L^2_\mu(\R^d)$, and an orthogonal basis for $\mathrm{H}^1_\mu(\R^d)$.

Let, for each $i=1,\dots,d$,  $\mathbf{e}_i$ be the multi-index with a $1$ in position $i$ and zero elsewhere. Then the following useful and well-known recurrence relations can easily be verified
\begin{align}\label{eq:Hermiterelations}
  \begin{split}
    & \partial_{v_i} \Psi_\alpha (v)= \sqrt{\alpha_i}\Psi_{\alpha-\mathbf{e}_i}(v), \quad \alpha_i > 0,\\
    & \partial_{v_i}^\ast \Psi_\alpha (v) = \sqrt{\alpha_i + 1}\Psi_{\alpha+\mathbf{e}_i}(v),\\
    & v_i \Psi_\alpha = \sqrt{\alpha_i + 1}\Psi_{\alpha+\B{e}_i} + \sqrt{\alpha_i}\Psi_{\alpha-\B{e}_i}.
  \end{split}
\end{align}
Furthermore, using the above relations and orthogonality, we can write
\begin{equation*}
  \|\Psi_\alpha\|^2_{\mathrm{H}^1_\mu(\R^d)} = (\Psi_\alpha,\Psi_\alpha)_{\L^2_\mu}+ (\nabla_v \Psi_\alpha,\nabla_v \Psi_\alpha)_{\L^2_\mu} = 1+\abs{\alpha}.
\end{equation*}

\subsection{Sobolev spaces with respect to Gaussian measure}
For any function $u\in \mathrm{H}^1_\mu(\R^d)$ we can use \cref{lem:simultaneous} to expand it in terms of the Hermite basis \cref{eq:hermite},
\begin{equation*}
  u = \sum_{\abs{\alpha}\geq 0} c^\alpha(u) \Psi_\alpha, \quad c^\alpha(u) = (u, \Psi_\alpha)_{\L^2_\mu}.
\end{equation*}
The expansion converges in $\mathrm{H}^1_\mu(\R^d)$ with
\begin{equation*}
  \|u \|^2_{\mathrm{H}^1_\mu(\R^d)} = \norm{u}^2_{\L^2_\mu(\R^d)} + \norm{\nabla_v u}^2_{\L^2_\mu(\R^d)} = \sum_{\abs{\alpha}\geq 0} (1+\abs{\alpha})|c^\alpha(u)|^2.
\end{equation*}
For $u\in \mathrm{H}^k_\mu(\R^d)$ we also have
\begin{align} \label{eq:fractional_norm}
  \|u\|^2_{\mathrm{H}^{k}_\mu(\R^d)}  = \sum_{\alpha} (1+\abs{\alpha})^{k} |(u,\Psi_\alpha)_{\L^2_\mu}|^2 .
\end{align}

\section{The associated hyperbolic system}\label{sec:hsys}
In this section we consider the ansatz \cref{eq:ansatz} and derive a hyperbolic system for the coefficients $c^\alpha$. Specifically, for $m\geq 1$ fixed, we let
\begin{equation*}
  c_m^{\alpha-\mathbf{e}_i} := 0\mbox{ if }\alpha_i = 0,\ c_m^{\alpha+\mathbf{e}_i} := 0\mbox{ if }|\alpha+\mathbf{e}_i| > m,
\end{equation*}
and applying $\partial_t-\mathcal{L}$ to $u_m$ from \cref{eq:ansatz} we get
\begin{align}\label{eq:um:expansion}
  (\partial_t - \mathcal{L} )u_m(t,x,v) =& \sum_{\abs{\alpha}=0}^m \bigl( \partial_t c_m^\alpha(t,x) \Psi_\alpha(v) +  \abs{\alpha} c_m^\alpha(t,x) \Psi_\alpha(v)\bigr)\notag\\
  &- \sum_{\abs{\alpha}=0}^m \bigl(\sum_{i=1}^d \sqrt{\alpha_i}\partial^*_{x_i} c_m^\alpha(t,x) \Psi_{\alpha-\mathbf{e}_i}(v)\bigr)\notag\\
  &+\sum_{\abs{\alpha}=0}^m \bigl(\sum_{i=1}^d\sqrt{\alpha_i+1}\partial_{x_i} c_m^\alpha(t,x) \Psi_{\alpha+\mathbf{e}_i}(v)\bigr).
\end{align}
Expand the right hand side $f$ of \cref{ivp1} in the Hermite basis
\begin{equation*}
  f(x,v) = \sum_{|\alpha|=0}^\infty f^\alpha(x) \Psi_\alpha(v),
\end{equation*}
where $f^\alpha(x) := (f(x,\cdot),\Psi_\alpha)_{\L^2_\mu}$ for all $\alpha$ such that $\abs{\alpha}\geq 0$.
Then, setting the right hand side of \cref{eq:um:expansion} to $\sum_{|\alpha|=0}^m f^\alpha \Psi_\alpha$ in the space spanned by $\{\Psi_{\alpha}\}_{|\alpha|= 0}^m$ we get the hyperbolic system
\begin{multline}\label{eq:hyperbolic_intro+}
  f^{\alpha}(t,x)= \ \partial_t c_m^\alpha(t,x)  +  \abs{\alpha} c_m^{\alpha}(t,x) \\
  - \sum_{i=1}^d \sqrt{\alpha_i+1} \partial^*_{x_i} c_m^{\alpha+\mathbf{e}_i}(t,x)+ \sum_{i=1}^d\sqrt{\alpha_i}\partial_{x_i} c_m^{\alpha-\mathbf{e}_i}(t,x),
\end{multline}
where $0\leq \abs{\alpha}\leq m$,   and  $\partial_{x_i}^*$ is defined in \cref{eq:formaldual}.
This is the truncated hyperbolic system that appeared in \cite{brinkmanBrownianMotionField1956,riskenFokkerplanckEquationMethods1996}.
Assuming that we have a solution to \cref{eq:hyperbolic_intro+}, and inserting it into \cref{eq:um:expansion} we get
\begin{align}\label{eq:um:expansion_err}
  (\partial_t - \mathcal{L} )u_m(t,x,v) = \sum_{|\alpha| = 0}^m f^\alpha \Psi_\alpha + \sum_{\abs{\alpha} = m} \sum_{i=1}^d \sqrt{\alpha_i+1} \partial_{x_i} c_m^\alpha \Psi_{\alpha+\mathbf{e}_i} .
\end{align}
Thus, in order to show that \cref{eq:ansatz} is a reasonable ansatz, we need to establish existence and regularity for solutions to \cref{eq:hyperbolic_intro+}.

To write \cref{eq:hyperbolic_intro+} in a more compact form, we first order all multi-indices $0 \leq |\alpha| \leq m$ into an array $\{\alpha^i\}$ such that  $|\alpha^i| \leq |\alpha^j|$ for all $i \leq j$. Let $\mathbf{c}_m$ denote the column vector $\B{c}_m :=(c_m^{\alpha^1},\ldots,c_m^{\alpha^n})$ where
\begin{equation*}
  n = \#\lbrace \alpha \textrm{ multi-index s.t. } 0\leq \abs{\alpha}\leq m \rbrace.
\end{equation*}
Using this notation we prove the following auxiliary lemma.
\begin{lemma} \label{lem:skew}
  The system in \cref{eq:hyperbolic_intro+} can be written as
  \begin{align*}
    \partial_t \mathbf{c}_m +  A \mathbf{c}_m  - \sum_{i=1}^d B_i \partial_{x_i}^\ast \mathbf{c}_m + \sum_{i=1}^d B_i^T \partial_{x_i} \mathbf{c}_m = \B{f}_m,
  \end{align*}
  where $\B{f}_m = (f^{\alpha^1},\dots,f^{\alpha^n})$,  $f^{\alpha^j} := (f,\Psi_{\alpha^j})_{\L^2_\mu}$,  $A = \diag(\{|\alpha^j|\}_{j=0}^1)$,  and each $B_i$ is constant $n \times n$ upper triangular matrices with zeros on the diagonal.
\end{lemma}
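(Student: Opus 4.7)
The plan is to translate the scalar system \cref{eq:hyperbolic_intro+} into matrix form entry-by-entry using the prescribed ordering of multi-indices. Fix the array $\{\alpha^j\}_{j=1}^n$ with $|\alpha^i|\leq|\alpha^j|$ whenever $i\leq j$. The first term, $\partial_t c_m^\alpha$, obviously produces $\partial_t \mathbf{c}_m$. The zeroth-order term $|\alpha|c_m^\alpha$ only couples index $\alpha^j$ to itself with coefficient $|\alpha^j|$, so it is encoded by the diagonal matrix $A=\diag(|\alpha^1|,\dots,|\alpha^n|)$. Finally, I would define, for each $i=1,\dots,d$, the matrix $B_i$ by
\begin{equation*}
  (B_i)_{jk} := \sqrt{\alpha^j_i+1}\,\text{ if }\alpha^k=\alpha^j+\mathbf{e}_i,\qquad (B_i)_{jk}:=0\text{ otherwise.}
\end{equation*}

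The verification then proceeds by a direct check on each row. Reading row $j$ of the proposed matrix equation, the contribution of $-\sum_i B_i\partial_{x_i}^\ast\mathbf{c}_m$ is $-\sum_i\sqrt{\alpha^j_i+1}\,\partial_{x_i}^\ast c_m^{\alpha^j+\mathbf{e}_i}$, which matches the third term in \cref{eq:hyperbolic_intro+} (with the convention $c_m^{\alpha+\mathbf{e}_i}=0$ when $|\alpha+\mathbf{e}_i|>m$, encoded in $B_i$ by the fact that then no such $\alpha^k$ exists in the array). For the transpose: $(B_i^T)_{jk}=(B_i)_{kj}$ is nonzero precisely when $\alpha^j=\alpha^k+\mathbf{e}_i$, i.e.\ $\alpha^k=\alpha^j-\mathbf{e}_i$, and equals $\sqrt{\alpha^k_i+1}=\sqrt{\alpha^j_i}$. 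Hence row $j$ of $\sum_i B_i^T\partial_{x_i}\mathbf{c}_m$ reads $\sum_i \sqrt{\alpha^j_i}\,\partial_{x_i} c_m^{\alpha^j-\mathbf{e}_i}$, matching the fourth term in \cref{eq:hyperbolic_intro+} (with the convention that the entry vanishes when $\alpha^j_i=0$, since then no $\alpha^k=\alpha^j-\mathbf{e}_i$ exists in the array). Collecting these and defining $\mathbf{f}_m=(f^{\alpha^1},\ldots,f^{\alpha^n})^T$ reproduces \cref{eq:hyperbolic_intro+} exactly.

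It remains to argue that each $B_i$ is upper triangular with zeros on the diagonal. A nonzero entry $(B_i)_{jk}$ requires $\alpha^k=\alpha^j+\mathbf{e}_i$, so $|\alpha^k|=|\alpha^j|+1>|\alpha^j|$. By the chosen ordering this forces $k>j$, so $B_i$ is strictly upper triangular; in particular $(B_i)_{jj}=0$. The $x$-independence of the entries is manifest from the definition, which involves only the integers $\alpha^j_i$.

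There is no real obstacle here: the content of the statement is bookkeeping, and the only mild subtlety is the careful handling of the boundary conventions $c_m^{\alpha-\mathbf{e}_i}=0$ when $\alpha_i=0$ and $c_m^{\alpha+\mathbf{e}_i}=0$ when $|\alpha+\mathbf{e}_i|>m$, which is automatic from the definition of $B_i$ on the finite array $\{\alpha^j\}_{j=1}^n$. Thus the lemma follows by inspection.
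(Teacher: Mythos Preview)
Your proof is correct and follows essentially the same approach as the paper: both define $B_i$ so that its $(j,k)$ entry is $\sqrt{\alpha^j_i+1}$ precisely when $\alpha^k=\alpha^j+\mathbf{e}_i$, verify row-by-row that this reproduces \cref{eq:hyperbolic_intro+}, and deduce strict upper triangularity from $|\alpha^k|=|\alpha^j|+1$ together with the ordering. Your handling of the boundary conventions and the transpose computation is slightly more explicit than the paper's, but the argument is the same.
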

\begin{proof}
  Fix $i \in \{1,\ldots,d\}$. By the definition of $B_i$ we have, for $1 \leq j \leq n$,
  \begin{equation*}
    \{B_i \partial_{x_i}^\ast \mathbf{c}_m\}_{j} = \sqrt{\alpha_i^j+1}\  \partial^*_{x_i} c_m^{\alpha^j+\mathbf{e}_i},
  \end{equation*}
  if $\alpha^j_i < m$, otherwise $\{B_i \partial_{x_i}^\ast \mathbf{c}_m\}_{j}=0$. Assume that $\alpha^j_i < m$ and let $j < l \leq n$ be the index such that $\alpha^j+\mathbf{e}_i = \alpha^l$. Then the  above reads $\{B_i\}_{jl} =  \sqrt{\alpha_i^l}$. Thus $B_i$ is upper triangular with zeros on the diagonal. Similarly, it's easy to check $\{B_i^T \partial_{x_i} \B{c}_m\}_j  = \sqrt{\alpha^j_i}\partial_{x_i} c_m^{\alpha^j-\B{e}_i}$,
  where $B_i^T$ is the transpose of $B_i$.
\end{proof}

Using that $\partial_{x_i}^\ast$ is the adjoint of $\partial_{x_i}$ in $\L^2_\eta(\R^d)$ we see that
\begin{equation*}
  (B_i \partial_{x_i}^\ast \mathbf{c}_m, \mathbf{c}_m )_{\L^2_\eta} = (B_i \mathbf{c}_m, \partial_{x_i} \mathbf{c}_m )_{\L^2_\eta} = (B_i^T \partial_{x_i} \mathbf{c}_m, \mathbf{c}_m)_{\L^2_\eta}.
\end{equation*}
Therefore
\begin{align}\label{eq:skew}
  \bigl ( A \mathbf{c}_m  - \sum_{i=1}^d B_i \partial_{x_i}^\ast \mathbf{c}_m + \sum_{i=1}^d B_i^T \partial_{x_i} \mathbf{c}_m, \mathbf{c}_m \bigr )_{\L^2_\eta} = \bigl ( A \mathbf{c}_m, \mathbf{c}_m \bigr )_{\L^2_\eta}.
\end{align}

\begin{remark}
  Note that the matrix $A$ in \cref{lem:skew} appears from the representation of the Ornstein-Uhlenbeck operator \cref{eq:ornuhl} in the spectral expansion, while $B_i$ and $B_i^T$ correspond to $\nabla_v$ and $\nabla_v^*$.
  It can therefore be seen, see \cref{lem:Em} below, that   $B_i\partial_{x_i}^\ast \B{c}_m$ and $B_i^T \partial_{x_i} \B{c}_m$ are mixed second order terms, with one derivative with respect to $v$ and one derivative with respect to  $x_i$.
\end{remark}

\begin{remark}
  In the case $d=1$ the system in \cref{eq:hyperbolic_intro+} equals
  \begin{align*}
    \begin{array}{lllll}
      \partial_t c_m^0 &  & - \sqrt{1} \partial_x^* c_m^1 & & = f^0, \\
      \partial_t c_m^1 & +  c_m^1 & - \sqrt{2} \partial_x^* c_m^2 & + \sqrt{1} \partial_x c_m^0 & = f^1,\\
      \vdots & \vdots & \vdots & \vdots & \vdots \\
      \partial_t c_m^{m-1} & +  (m-1) c_m^{m-1} & - \sqrt{m}\partial_x^* c_m^m & +  \sqrt{m-1} \partial_x c_m^{m-2} & = f^{m-1},\\
      \partial_t c_m^m & +  m c_m^m &  & +  \sqrt{m} \partial_x c_m^{m-1} & = f^m,
    \end{array}
  \end{align*}
  and the matrices $A$ and $B$ appearing in \cref{lem:skew} are given as
  \begin{align*}
    A = \begin{pmatrix}
    0 &  &  &  &  \\
    & {1} &  &  &  \\
    &  & {2} &  &  \\
    &  &  & \ddots &  \\
    &  &  &  & {m}
    \end{pmatrix},
    \quad
    B = \begin{pmatrix}
    0 & \sqrt{1} & 0 &  &  \\
    & 0 & \sqrt{2} &  &  \\
    &  & 0 & \ddots &  \\
    &  &  & \ddots & \sqrt{m} \\
    &  &  &  & 0
    \end{pmatrix}.
  \end{align*}
\end{remark}

\subsection{Well-posedness for the hyperbolic system}
From \cref{lem:skew} we see that if we define
\begin{equation*}
  H  :=  A  -  \sum_{i=1}^d B_i \partial_{x_i}^\ast  + \sum_{i=1}^d B_i^T \partial_{x_i},
\end{equation*}
then the initial value problem corresponding to \cref{eq:hyperbolic_intro+} is
\begin{align}\label{ivp2}
  \begin{cases}
    \partial_t \mathbf{c}_m + H \mathbf{c}_m =  \B{f}_m & \text{ in } (0,T)\times\T^d,\\
    \hphantom{\partial_t \mathbf{c}_m + H } \mathbf{c}_m = \mathbf{g}_m & \text{ on } \{t=0\}\times\T^d,
  \end{cases}
\end{align}
where
\begin{equation*}
  \mbox{$\mathbf{g}_m = (g_m^1,\dots, g_m^n)$ with $g_m^j = (g,\Psi_{\alpha^j})_{\L^2_\mu}$,}
\end{equation*}
and
\begin{equation*}
  \mbox{$\B{f}_m = (f^{\alpha^1}, \dots,f^{\alpha^n})$ with $f^{\alpha^j} = (f, \Psi_{\alpha^j})_{\L^2_\mu}$ for all $1\leq j\leq n$}.
\end{equation*}

\begin{definition} \label{def:brinkman_weak} 
  We say that a vector-valued function
  $\B{c}_m \in \L^2((0,T);\mathrm{H}^1_\eta(\T^d;\R^n))$ is a weak solution to \cref{ivp2} if $\partial_t \B{c}_m \in \L^2((0,T);\mathrm{H}^1_\eta(\T^d;\R^n))$,
  $\mathbf{c}_m(0) = \mathbf{g}_m$ and for a.e. $0 \leq t \leq T$ and all test functions
  $\varphi \in \mathrm{H}^1_\eta(\T^d;\R^n)$, it holds
  \begin{equation*}
    (\partial_t \mathbf{c}_m(t,\cdot) + H \mathbf{c}_m(t,\cdot), \varphi(\cdot))_{\L^2_\eta} = 0.
  \end{equation*}
\end{definition}

To prove existence of weak solutions to \cref{ivp2} we will use the method of vanishing viscosity, i.e.~we consider, for $\varepsilon>0$ small, the problem
\begin{align}\label{ivp3}
  \begin{cases}
    \partial_t \mathbf{c}^\varepsilon_m + \varepsilon \nabla_x^\ast \nabla_x \mathbf{c}_m^\varepsilon + H \mathbf{c}_m^\varepsilon= \B{f}^\varepsilon_m &\text{ in }  (0,T)\times\T^d, \\
    \hphantom{\partial_t \mathbf{c}^\varepsilon_m + \varepsilon \nabla_x^\ast \nabla_x \mathbf{c}_m^\varepsilon + H}\mathbf{c}^\varepsilon_m = \mathbf{g}^\varepsilon_m & \text{ on } \{t=0\}\times\T^d,
  \end{cases}
\end{align}
where
\begin{equation*}
  \mbox{$\mathbf{g}^\varepsilon_m := \phi_\varepsilon * \mathbf{g}_m$ and $\B{f}_m^\varepsilon := \phi_\varepsilon * \B{f}_m$},
\end{equation*}
where $\phi_\varepsilon(x) = \varepsilon^{-n} \phi(x/\varepsilon)$, and $\phi$ is a standard mollifier on $\T^d$. Note that the convolution is applied componentwise on the vector valued functions.
\begin{proposition}\label{thm:hsys:approx}
  Let $k\geq 0$ be a fixed integer and assume that
  \begin{equation}\label{eq:assumphsys}
    U(x)\in C^\infty(\T^d),\ \B{g}_m\in\mathrm{H}^{k+1}_\eta(\T^d),\ \B{f}_m\in \mathrm{H}^{k+1}_{t,x}((0,T)\times\T^d).
  \end{equation}
  Then there exists, for each $\varepsilon>0$, a unique weak solution $\mathbf{c}_m^\varepsilon$ to \cref{ivp3} such that
  \begin{equation}\label{regga}
    \partial_t^{i} \B{c}_m^\varepsilon \in \L^2((0,T);\mathrm{H}^{2k+4-2i}_\eta(\T^d)), \text{ for every } i=0,\dots,k+2.
  \end{equation}
\end{proposition}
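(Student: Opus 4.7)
The plan is to construct the solution via a Faedo--Galerkin approximation in space, using a smooth orthonormal basis $\{w_j\}_{j\geq 1}$ of $\L^2_\eta(\T^d)$ with $w_j\in C^\infty(\T^d)$, for instance the eigenbasis of $\nabla_x^\ast\nabla_x$ on $\T^d$ (which is available since this operator is self-adjoint and has compact resolvent on the compact manifold $\T^d$ with smooth weight). Setting
$$\B{c}_m^{\eps,N}(t,x)=\sum_{j=1}^{N}d_j^N(t)\,w_j(x),$$
applied componentwise, and projecting \eqref{ivp3} onto $\mathrm{span}\{w_1,\dots,w_N\}$ yields a finite linear ODE system for $\{d_j^N\}$ with smooth coefficients, uniquely solvable by Picard--Lindel\"of on $[0,T]$.

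The key energy identity is obtained by testing the projected equation against $\B{c}_m^{\eps,N}$ in $\L^2_\eta$ and invoking the crucial cancellation \eqref{eq:skew}, which gives $(H\B{c},\B{c})_{\L^2_\eta}=(A\B{c},\B{c})_{\L^2_\eta}\geq 0$. This produces
$$\tfrac12\tfrac{d}{dt}\|\B{c}_m^{\eps,N}\|_{\L^2_\eta}^2+\eps\|\nabla_x\B{c}_m^{\eps,N}\|_{\L^2_\eta}^2+(A\B{c}_m^{\eps,N},\B{c}_m^{\eps,N})_{\L^2_\eta}=(\B{f}_m^\eps,\B{c}_m^{\eps,N})_{\L^2_\eta},$$
and Gr\"onwall yields uniform bounds in $L^\infty_t\L^2_\eta\cap\L^2((0,T);\mathrm{H}^1_\eta)$. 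A standard weak-compactness argument extracts a subsequential limit $\B{c}_m^\eps$ satisfying \cref{def:brinkman_weak}; the initial condition is realized because the limit admits a continuous-in-time representative into $\L^2_\eta$. Uniqueness follows by applying the same basic $\L^2_\eta$ energy estimate to the difference of two solutions, which satisfies the homogeneous equation and vanishing initial data.

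For the higher regularity \eqref{regga}, the strategy is a standard parabolic bootstrap based on two ingredients. First, spatial regularity: apply $\nabla_x^\alpha$ for $|\alpha|\leq 2k+4$ to the equation and test against $\nabla_x^\alpha\B{c}_m^\eps$ in $\L^2_\eta$. The commutators $[\nabla_x^\ast\nabla_x,\nabla_x^\alpha]$ and $[H,\nabla_x^\alpha]$ are differential operators of order at most $|\alpha|+1$ with smooth coefficients involving derivatives of $U$ up to order $|\alpha|+1$, hence bounded on $\T^d$; after integration by parts one half-derivative is absorbed into the viscous term $\eps\|\nabla_x^{\alpha+1}\B{c}_m^\eps\|_{\L^2_\eta}^2$ via Young's inequality, and the remaining lower-order terms are handled by induction on $|\alpha|$. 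Second, time regularity: differentiate the equation $i$ times in $t$ and note that the smoothness of $\B{f}_m^\eps$ and $\B{g}_m^\eps$ in $x$ (from the mollification) combined with the hypothesis $\B{f}_m\in\mathrm{H}^{k+1}_{t,x}$ gives $\partial_t^i\B{f}_m^\eps\in\L^2((0,T);\mathrm{H}^{2k+4-2i}_\eta)$ for $0\leq i\leq k+2$; equivalently, one may simply read off $\partial_t\B{c}_m^\eps$ from the equation as $-\eps\nabla_x^\ast\nabla_x\B{c}_m^\eps-H\B{c}_m^\eps+\B{f}_m^\eps$ and observe that each time derivative costs two spatial derivatives. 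Iterating gives the stated regularity.

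The main technical obstacle is the careful bookkeeping of the commutators with the weighted adjoint $\nabla_x^\ast=-\nabla_x+\nabla_x U$ and with the first-order matrix operator $H$ at each induction step, ensuring that every commutator genuinely drops order and that the resulting constants depend only on $\|\nabla_x^j U\|_\infty$ for $j\leq k+2$ on the compact torus. Once these routine but delicate commutator calculations are in place, the iteration closes cleanly for every finite $k\geq 0$.
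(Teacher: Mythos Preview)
Your proposal is correct, but the paper takes a different and shorter route. Rather than building a Galerkin scheme, the authors observe that on $\T^d$ the measure $\ud\eta=e^{-U}\ud x$ is comparable to Lebesgue measure, so the problem \eqref{ivp3} is equivalent to a standard linear parabolic system with smooth coefficients. They then freeze the lower-order terms: for a given $\B{w}\in L^\infty(0,T;\mathrm{H}^1)$ they solve the constant-coefficient heat system $\partial_t\B{c}_m-\eps\Delta_x\B{c}_m=\B{f}_m^\eps-\eps\nabla_xU\cdot\nabla_x\B{w}-H\B{w}$ and conclude by a contraction fixed-point argument (Evans, \S7.3.2.b). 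The regularity \eqref{regga} is not bootstrapped by hand but simply cited from classical parabolic regularity theory (Evans, \S7.1.3), using that $\B{g}_m^\eps$ and $\B{f}_m^\eps$ are smooth in $x$ after mollification and that $\B{f}_m$ carries $k+1$ time derivatives.

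Your Galerkin approach has the advantage of being self-contained and of exposing the same energy structure (via \eqref{eq:skew}) that the paper later exploits in Proposition~\ref{thm:energy}; it also makes the $\eps$-dependence of the estimates visible from the start. The paper's approach is terser because it outsources both existence and regularity to textbook results. One small imprecision in your write-up: the claim $\partial_t^i\B{f}_m^\eps\in\L^2((0,T);\mathrm{H}^{2k+4-2i}_\eta)$ fails for $i=k+2$, since $\B{f}_m\in\mathrm{H}^{k+1}_{t,x}$ supplies only $k+1$ time derivatives; but your alternative route---reading $\partial_t^{k+2}\B{c}_m^\eps$ directly from the equation using $\partial_t^{k+1}\B{c}_m^\eps\in\L^2(\mathrm{H}^2_\eta)$ and $\partial_t^{k+1}\B{f}_m^\eps\in\L^2(\L^2_\eta)$---is correct and closes the iteration.
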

\begin{proof} Note that the measure $\ud \eta = e^{-U(x)} \ud x$ is comparable to the Lebesgue measure on $\T^d$, and therefore we can and will in the following  use function spaces defined with respect to  $\ud x$ instead of $\ud \eta$. Introducing
  \begin{equation*}
    X := \L^\infty (0,T; \mathrm{H}^1(\T^d; \R^{n})),
  \end{equation*}
  we consider for $\mathbf{w}\in X$ given, the linear system
  \begin{align*}
    \begin{cases}
      \partial_t \mathbf{c}_m - \varepsilon \Delta_{x} \mathbf{c}_m = \B{f}_m^\varepsilon- \varepsilon \nabla_x U\cdot\nabla_x\mathbf{w}- H \mathbf{w} & \text{ in } (0,T)\times\T^d, \\
      \hphantom{\partial_t \mathbf{c}_m - \varepsilon \Delta_{x}} \mathbf{c}_m = \B{g}_m^\varepsilon& \text{ on } \{t=0\}\times\T^d.
    \end{cases}
  \end{align*}
  Given our information on the right hand side of this system, and on $\B{g}_m^\varepsilon$, we can conclude that this system has a unique solution $\mathbf{c}_m$ such that $\mathbf{c}_m\in \L^2((0,T);\allowbreak\mathrm{H}^2(\T^d;\allowbreak\R^n))$, $\partial_t \mathbf{c}_m\in \L^2((0,T);\allowbreak\L^2(\T^d;\allowbreak\R^n))$. Therefore, it follows from a standard fixed point argument, see \cite[Sec 7.3.2.b]{evansPartialDifferentialEquations2010a},  that there exists a unique  solution $\mathbf{c}_m$ to \cref{ivp3} satisfying $\mathbf{c}_m\in \L^2((0,T); \mathrm{H}^1(\T^d;\R^n))$ and $\partial_t \mathbf{c}_m \in \L^2((0,T);\mathrm H^{-1}(\T^d;\R^n))$.
  The regularity result stated in \cref{regga} is a consequence of  parabolic regularity theory as we have smooth initial data and regular coefficients, see for instance \cite[Sec 7.1.3]{evansPartialDifferentialEquations2010a}.
\end{proof}

Having established the existence of solutions to \cref{ivp3} we next prove uniform energy estimates w.r.t.~$\varepsilon$ in order to complete the method of vanishing viscosity.

\begin{proposition}\label{thm:energy}
  Let $k\geq 0$ be a fixed integer and assume \cref{eq:assumphsys}. Let $\mathbf{c}_m^\varepsilon$ be the unique weak solution to \cref{ivp3} obtained in \cref{thm:hsys:approx}. Given an integer $k\geq 0$, there exists a constant $C = C(d,T,m,U,k)>0$ such that
  \begin{equation}\label{thm:energy:estimate}
    \norm{\B{c}_m^\varepsilon}^2_{\mathrm H^{k+1}_{t,x}((0,T)\times \T^d)} \leq C \bigl(\norm{\B{g}_m}^2_{\mathrm H^{k+1}_\eta(\T^d)} + \norm{\B{f}_m}^2_{\mathrm H^{k+1}_{t,x}((0,T)\times \T^d)}\bigr).
  \end{equation}
\end{proposition}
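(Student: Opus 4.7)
The plan is to prove the estimate by induction on the total order $l = i+|\alpha|$ with $0 \leq l \leq k+1$: apply $\partial_t^i \partial_x^\alpha$ to the viscous equation \cref{ivp3}, test against $\partial_t^i \partial_x^\alpha \mathbf{c}_m^\eps$ in $\L^2_\eta(\T^d)$, and exploit the skew-symmetry identity \cref{eq:skew} to extract positivity. For the base case $l=0$, testing \cref{ivp3} directly with $\mathbf{c}_m^\eps$ reduces, via \cref{eq:skew}, the $H$-pairing to $(A\mathbf{c}_m^\eps, \mathbf{c}_m^\eps)_{\L^2_\eta} \geq 0$, the viscous term contributes the non-negative quantity $\eps\|\nabla_x \mathbf{c}_m^\eps\|^2_{\L^2_\eta}$, and a standard Gronwall argument together with the mollifier bound $\|\mathbf{g}_m^\eps\|_{\L^2_\eta}\leq\|\mathbf{g}_m\|_{\L^2_\eta}$ closes the step.

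For the inductive step, set $\phi := \partial_t^i \partial_x^\alpha \mathbf{c}_m^\eps$. Since $A, B_i, B_i^T$ are constant matrices and the operators $\partial_t, \partial_{x_i}$ commute pairwise, $\phi$ solves
\begin{align*}
  \partial_t \phi + \eps \nabla_x^\ast \nabla_x \phi + H\phi = \partial_t^i \partial_x^\alpha \mathbf{f}_m^\eps - [\partial_x^\alpha, H]\partial_t^i \mathbf{c}_m^\eps - \eps\,[\partial_x^\alpha, \nabla_x^\ast \nabla_x]\partial_t^i \mathbf{c}_m^\eps.
\end{align*}
A Leibniz expansion, using $H = A + \sum_i (B_i+B_i^T)\partial_{x_i} - \sum_i B_i(\partial_{x_i}U)$ and $\nabla_x^\ast \nabla_x = -\Delta + \nabla_x U \cdot \nabla_x$, identifies $[\partial_x^\alpha, H]$ as a spatial differential operator of order at most $|\alpha|-1$ and $[\partial_x^\alpha, \nabla_x^\ast \nabla_x]$ of order at most $|\alpha|$, both with coefficients polynomial in the derivatives of $U$ up to order $|\alpha|+1$. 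Testing against $\phi$ in $\L^2_\eta$ and invoking \cref{eq:skew} yields
\begin{align*}
  \tfrac{1}{2}\tfrac{d}{dt}\|\phi\|^2_{\L^2_\eta} + \eps\|\nabla_x \phi\|^2_{\L^2_\eta} + (A\phi, \phi)_{\L^2_\eta} = (\mathrm{RHS}, \phi)_{\L^2_\eta},
\end{align*}
and after discarding the non-negative terms on the left and applying Cauchy-Schwarz and Young, the right-hand side is bounded by a multiple of $\|\phi\|^2_{\L^2_\eta}$, plus $\|\partial_t^i \partial_x^\alpha \mathbf{f}_m^\eps\|^2_{\L^2_\eta}$, plus an $\mathrm{H}^{|\alpha|-1}_\eta$-norm of $\partial_t^i \mathbf{c}_m^\eps$ (already controlled at induction level $l-1$), plus $\eps^2$ times an $\mathrm{H}^{|\alpha|}_\eta$-norm of $\partial_t^i \mathbf{c}_m^\eps$ whose top-order part produces $\eps^2$ times sums of the other $\|\phi\|^2$ at level $l$ and is absorbable for $\eps$ sufficiently small.

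The remaining ingredient is a uniform-in-$\eps$ bound on the initial datum $\phi(0) = \partial_t^i \partial_x^\alpha \mathbf{c}_m^\eps(0)$. Iterating the equation as $\partial_t \mathbf{c}_m^\eps = \mathbf{f}_m^\eps + L\mathbf{c}_m^\eps$ with $L := -\eps\nabla_x^\ast\nabla_x - H$ gives
\begin{align*}
  \partial_t^i \mathbf{c}_m^\eps(0) = L^i \mathbf{g}_m^\eps + \sum_{j=0}^{i-1} L^j \partial_t^{i-1-j}\mathbf{f}_m^\eps(0),
\end{align*}
and $L^i$ decomposes into terms of the schematic form $\eps^a D_x^{i+a}$ with $0 \leq a \leq i$. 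Applying $\partial_x^\alpha$ then produces operators of order $|\alpha|+i+a$ with an $\eps^a$ prefactor; the mollifier estimate $\|\partial_x^\beta u_\eps\|_{\L^2_\eta} \leq C\,\eps^{-(|\beta|-k-1)_+}\|u\|_{\mathrm{H}^{k+1}_\eta}$ makes the $\eps^a$ exactly cancel the $\eps^{-(|\alpha|+i+a-k-1)_+}$ Sobolev loss whenever $|\alpha|+i \leq k+1$. Hence $\phi(0)$ is bounded uniformly in $\eps$ by $C(\|\mathbf{g}_m\|_{\mathrm{H}^{k+1}_\eta} + \|\mathbf{f}_m\|_{\mathrm{H}^{k+1}_{t,x}})$, the time traces $\partial_t^j \mathbf{f}_m^\eps(0,\cdot)$ with $j \leq i-1 \leq k$ being controlled by the standard time-trace theorem.

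Applying Gronwall in $t$ and summing over all $(i,\alpha)$ with $i+|\alpha| = l$ closes the induction up to $l = k+1$ and yields the stated estimate with $C = C(d,T,m,U,k)$. The principal technical obstacle is the careful bookkeeping of $\eps$-powers: in the energy step one must verify that the top-order piece of the viscous commutator carries an $\eps^2$ and is therefore absorbable, while in the initial-data analysis the $\eps^a$ gained from each $L$-factor must match the $\eps^{-\cdot}$ Sobolev loss from the mollifier at exactly the right exponent.
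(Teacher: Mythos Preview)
Your proposal is correct and follows essentially the same strategy as the paper: differentiate the viscous system, exploit the skew-symmetry identity \cref{eq:skew} to reduce the $H$-pairing to the nonnegative $(A\phi,\phi)$, control commutators with $U$-dependent coefficients, and compensate the loss of regularity in the initial datum $\partial_t^i\mathbf c_m^\eps(0)$ through the mollifier gain.

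The organisation differs slightly. The paper inducts on the regularity parameter $k$ itself: at each step it first treats all spatial derivatives up to order $k+1$ by iterating the equation for $\mathbf w^{(\theta)}=\nabla_x^\theta\mathbf c_m^\eps$ with inductively defined right-hand sides $\tilde{\mathbf f}^{(\theta)}$, and then obtains all time derivatives by applying the induction hypothesis for $k-1$ to $\hat{\mathbf w}=\partial_t\mathbf c_m^\eps$, whose initial datum $\mathbf f_m^\eps(0)-H\mathbf g_m^\eps-\eps\nabla_x^\ast\nabla_x\mathbf g_m^\eps$ is controlled via the single mollifier estimate $\eps^2\|\nabla_x^\ast\nabla_x\mathbf g_m^\eps\|^2\lesssim\|\nabla_x\mathbf g_m\|^2$. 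Your version instead inducts on the total order $l=i+|\alpha|$ and handles mixed derivatives simultaneously; this forces you to expand $\partial_t^i\mathbf c_m^\eps(0)$ as $L^i\mathbf g_m^\eps+\dots$ and track the $\eps^a D_x^{i+a}$ structure explicitly, which is a bit heavier but yields the same cancellation. One small point: the $\eps^2\times(\text{level }l)$ contribution from the viscous commutator is not ``absorbed for $\eps$ small'' but simply bounded (since $\eps\le 1$) and carried through Gr\"onwall along with the other level-$l$ terms, exactly as you do in the final sentence; the paper treats it the same way.
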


\begin{proof}
  We will prove the proposition by induction with respect to $k$. Throughout this proof we use the convention that $a \lesssim b$ means that $a\leq Cb$ for some positive constant $C$ which may depend on $(d,T,m,U,k)$ but which is independent of $\varepsilon$. In the following we will use that the commutator $[\partial_{x_j}, \partial_{x_i}^\ast]$ equals multiplication by $\partial_{x_j}\partial_{x_i} U$.

  \smallskip

  \noindent \emph{Step 1: }
  In this step we establish \cref{thm:energy:estimate} in the case $k=0$ assuming $\B{g}_m\in \mathrm H^1_\eta(\T^d)$ and $\B{f}_m\in \mathrm H^1_{t,x}((0,T)\times\T^d)$.  By \cref{eq:skew} and the self-adjointness of $\nabla_x^*\nabla_x$ in $\L^2_\eta(\T^d)$,
  \begin{align}\label{eq:hsys:main}
    \begin{split}
      \partial_t \frac{1}{2} \norm{\B{c}_m^\varepsilon}^2_{\L^2_\eta} =& - \norm{A^{1/2}\B{c}_{m}^\varepsilon}^2_{\L^2_\eta} - \varepsilon \norm{\nabla_x \B{c}_m^\varepsilon}^2_{\L^2_\eta} + (\B{c}_m^\varepsilon, \B{f}_m^\varepsilon)_{\L^2_\eta} \\
      \leq & (\B{c}_m^\varepsilon, \B{f}_m^\varepsilon)_{\L^2_\eta} \leq \norm{\B{c}_m^\varepsilon}^2_{\L^2_\eta} + \norm{\B{f}_m^\varepsilon}^2_{\L^2_\eta}.
    \end{split}
  \end{align}
  Applying Grönwall's inequality we get
  \begin{equation}\label{ineq:hsys:main}
    \max_{0\leq t\leq T} \norm{\B{c}_m^\varepsilon}^2_{\L^2_\eta} \lesssim  \norm{\B{g}_m^\varepsilon}^2_{\L^2_\eta} + \norm{\B{f}_m^\varepsilon}^2_{\L^2((0,T);\L^2_\eta)}.
  \end{equation}
  Using \cref{thm:hsys:approx} we see that we can now differentiate \cref{ivp3} w.r.t.~$x$. Given a multi-index $\theta = (\theta_1, \dots,\theta_d)$, we set $\nabla_x^\theta := \partial_{x_{1}}^{\theta_1}\cdots\partial_{x_{d}}^{\theta_d}$,  and we let $\B{w}^{(\theta)} :=\nabla_x^\theta \B{c}_m^\varepsilon$ and similarly for $(\B{g}_m^\varepsilon)^{(\theta)}$. We also let
  \begin{equation*}
    \norm{\nabla_x^l \B{c}_m^\varepsilon}^2_{\L^2_\eta}:= \sum_{\abs{\theta}=l} \norm{\B{w}^{(\theta)}}^2_{\L^2_\eta},
  \end{equation*}
  for all integers $l\geq 0$. Using this notation we have, in particular, that $\B{w}^{(\B{e}_j)}=\partial_{x_j}\B{c}_m^\varepsilon$, and
  differentiating the equation in \cref{ivp3} w.r.t.~$x_j$, for $1\leq j\leq d$, we deduce
  \begin{align} \label{eq:hsys:dx}
    \begin{cases}
      \partial_t \B{w}^{(\B{e}_j)} + H \B{w}^{(\B{e}_j)}  + \varepsilon \nabla_{x}^\ast \nabla_{x} \B{w}^{(\B{e}_j)}
      = \tilde{\B{f}}^{(\B{e}_j)} & \text{ in } (0,T)\times \T^d ,\\
      \hphantom{\partial_t \B{w}^{(\B{e}_j)} + H \B{w}^{(\B{e}_j)}  + \varepsilon \nabla_{x}^\ast \nabla_{x}} \B{w}^{(\B{e}_j)} = \partial_{x_j} \B{g}_m^\varepsilon &\text{ on } \{t=0\}\times\T^d.
    \end{cases}
  \end{align}
  Here
  \begin{equation*}
    \tilde{\B{f}}^{(\B{e}_j)} = \sum_{i=1}^d [\partial_{x_j}, \partial_{x_i}^\ast] \bigl (B_i \B{c}_m^\varepsilon - \varepsilon \B{w}^{(\B{e}_i)} \bigr ) + \partial_{x_j} \B{f}_m^\varepsilon,
  \end{equation*}
  and it follows that
  \begin{equation*}
    \norm{{\tilde{\B{f}}}^{(\B{e}_j)}}_{\L^2_\eta}^2 \lesssim \norm{\B{c}_m^\varepsilon}_{\L^2_\eta}^2 + \norm{\nabla_x \B{c}_m^\varepsilon}_{\L^2_\eta}^2 + \norm{\partial_{x_j}\B{f}_m^\varepsilon}_{\L^2_\eta}^2.
  \end{equation*}
  Arguing again as in \cref{eq:hsys:main} and applying Gr\"onwall's inequality we obtain
  \begin{equation}\label{ineq:hsys:dx}
    \max_{0\leq t\leq T} \norm{\nabla_x \B{c}_m^\varepsilon}^2_{\L^2_\eta} \lesssim \norm{\B{g}_m^\varepsilon}^2_{\mathrm{H}^1_\eta} + \norm{\B{f}_m^\varepsilon}^2_{\L^2((0,T);\mathrm{H}^1_\eta)}.
  \end{equation}
  Next, letting $\hat{\B{w}} = \partial_t \B{c}_m^\varepsilon$ we see that
  \begin{align}\label{eq:hsys:dt}
    \begin{cases}
      \partial_t \hat{\B{w}} + H \hat{\B{w}}  + \varepsilon \nabla_{x}^\ast \nabla_{x} \hat{\B{w}}
      = \partial_t \B{f}_m^\varepsilon & \text{ in } (0,T)\times \T^d ,\\
      \hat{\B{w}} = \B{f}_m^\varepsilon(0) - H \B{g}_m^\varepsilon - \varepsilon\nabla_x^\ast \nabla_x \B{g}_m^\varepsilon & \text{ on } \{t=0\}\times\T^d.
    \end{cases}
  \end{align}
  Arguing again as in \cref{eq:hsys:main}, we obtain
  \begin{align}\label{ineq:hsys:dt:0}
    \begin{split}
      \norm{\hat{\B{w}}}^2_{\L^2_\eta} &\lesssim \norm{\hat{\B{w}}(0)}^2_{\L^2_\eta} + \norm{\partial_t \B{f}_m^\varepsilon}^2_{\L^2((0,T);\L^2_\eta)}\\
      & \lesssim \norm{\B{f}_m^\varepsilon(0)}^2_{\L^2_\eta} +\norm{ H \B{g}_m^\varepsilon}^2_{\L^2_\eta} + \varepsilon^2 \norm{\nabla_x^\ast \nabla_x \B{g}_m^\varepsilon}^2_{\L^2_\eta} + \norm{\partial_t \B{f}_m^\varepsilon}^2_{\L^2((0,T);\L^2_\eta)}.
    \end{split}
  \end{align}
  Since  $\B{g}_m^\varepsilon = \phi_\varepsilon * \B{g}_m$ and $\T^d$ is bounded, it holds
  \begin{equation}\label{ineq:hsys:dt:1}
    \norm{\nabla_x^\ast \nabla_x \B{g}_m^\varepsilon}^2_{\L^2_\eta} \leq \frac{C}{\varepsilon^2} \norm{ \nabla_x \B{g}_m}^2_{\L^2_\eta}.
  \end{equation}
  Also, by the fundamental theorem of calculus and Jensen's inequality, we have
  \begin{equation}\label{ineq:hsys:dt:2}
    \norm{\B{f}_m^\varepsilon(0)}^2_{\L^2_\eta} \lesssim \norm{\B{f}_m^\varepsilon}^2_{\L^2((0,T);\L^2_\eta)} + \norm{\partial_t \B{f}_m^\varepsilon}^2_{\L^2((0,T);\L^2_\eta)}.
  \end{equation}
  Furthermore, by the definition of $H$,
  \begin{equation}\label{ineq:hsys:dt:3}
    \norm{ H \B{g}_m^\varepsilon}^2_{\L^2_\eta} \lesssim \norm{\B{g}_m^\varepsilon}^2_{\mathrm{H}^1_\eta}.
  \end{equation}
  Combining \cref{ineq:hsys:dt:0,ineq:hsys:dt:1,ineq:hsys:dt:2,ineq:hsys:dt:3} we  see that
  \begin{equation}\label{ineq:hsys:dt}
    \norm{\partial_t \B{c}_m^\varepsilon} ^2_{\L^2_\eta} \lesssim \norm{\B{g}_m^\varepsilon}^2_{\mathrm{H}^1_\eta} + \norm{\B{f}_m^\varepsilon}^2_{\L^2((0,T);\L^2_\eta)} + \norm{\partial_t \B{f}_m^\varepsilon}^2_{\L^2((0,T);\L^2_\eta)}.
  \end{equation}
  Finally, we note that
  \begin{equation*}
    \norm{\B{g}_m^\varepsilon}^2_{\mathrm{H}^1_\eta} \leq \norm{\B{g}_m}^2_{\mathrm{H}^1_\eta} \textrm{ and } \norm{\B{f}_m^\varepsilon}^2_{\mathrm{H}^1_{t,x}} \leq \norm{\B{f}_m}^2_{\mathrm{H}^1_{t,x}}.
  \end{equation*}
  These two inequalities, combined with \cref{ineq:hsys:main,ineq:hsys:dx,ineq:hsys:dt}, completes the proof of \cref{thm:energy:estimate} in the case $k=0$.

  \smallskip
  \noindent \emph{Step 2: }
  In this step we assume that \cref{thm:energy:estimate} holds for some $k\geq 0$ and we want to prove
  the estimate for $k+1$ assuming that $\B{g}_m\in \mathrm{H}^{k+2}_{\eta}(\T^d)$ and $\B{f}_m\in\mathrm{H}^{k+2}_{t,x}((0,T)\times \T^d)$. As $\mathrm{H}^{k+2}_{\eta}(\T^d)\subset \mathrm{H}^{k+1}_{\eta}(\T^d)$ and $\mathrm{H}^{k+2}_{t,x}((0,T)\times \T^d)\subset\mathrm{H}^{k+1}_{t,x}((0,T)\times \T^d)$ it follows by the induction hypothesis that
  \begin{equation}\label{ineq:hsys:induction:main}
    \norm{\B{c}_m^\varepsilon}^2_{\mathrm H^{k+1}_{t,x}((0,T)\times \T^d)} \leq C \bigl(\norm{\B{g}_m}^2_{\mathrm H^{k+1}_\eta} + \norm{\B{f}_m}^2_{\mathrm H^{k+1}_{t,x}((0,T)\times \T^d)}\bigr).
  \end{equation}
  In a  way similar to \cref{eq:hsys:dt}, just noting that $\partial_t \B{f}^\varepsilon_m$ and $\hat{\B{w}}(0) $ now satisfy the assumption \cref{eq:assumphsys} for $k$, it follows that
  \begin{equation*}
    \sum_{i+j=0}^{k+1} \norm{{\partial_t^i \nabla_x^j \hat{\mathbf{w}}}}^2_{\L^2((0,T);\L^2_\eta)} \lesssim \norm{\hat{\B{w}}(0)}^2_{\mathrm{H}^{k+1}_\eta}+ \sum_{i+j=0}^{k+1} \norm{\partial_t^{i+1} \nabla_x^{j} \B{f}_m}^2_{\L^2((0,T);\L^2_\eta)},
  \end{equation*}
  i.e.
  \begin{align}\label{ineq:hsys:induction:dt}
    \sum_{i=1}^{k+2} \norm{\partial_t^i \B{c}_m^\varepsilon}^2_{\L^2((0,T);\mathrm{H}^{k+2-i}_\eta)} \lesssim \norm{\B{g}_m}^2_{\mathrm{H}^{k+2}_\eta}
    + \sum_{i=0}^{k+2} \norm{\partial_t^i \B{f}_m}^2_{\L^2((0,T);\mathrm{H}^{k+2-i}_\eta)}.
  \end{align}
  It remains to control $\norm{\nabla_x^{k+2} \B{c}_m^\varepsilon}_{\L^2_\eta}$. Recall the notations in \cref{eq:hsys:dx} and assume that $\theta$ is a multi-index such that $\abs{\theta} = k+1$. Then, for any $\theta+\B{e}_j$ with $1\leq j\leq d$ we have
  \begin{align*}
    \begin{cases}
      \partial_t \B{w}^{(\theta+\B{e}_j)}+ H \B{w}^{(\theta+\B{e}_j)} + \varepsilon \nabla_{x}^\ast \nabla_{x} \B{w}^{(\theta+\B{e}_j)}
      = \tilde{\B{f}}^{(\theta+\B{e}_j)} & \text{ in } (0,T)\times \T^d,\\
      \hphantom{\partial_t \B{w}^{(\theta+\B{e}_j)}+ H \B{w}^{(\theta+\B{e}_j)} + \varepsilon \nabla_{x}^\ast \nabla_{x}}  \B{w}^{(\theta+\B{e}_j)} = {(\B{g}_m^\varepsilon)}^{(\theta+\B{e}_j)} & \text{ on } \{t=0\}\times\T^d,
    \end{cases}
  \end{align*}
  where the functions $\{\tilde{\B{f}}^{(\theta+\B{e}_j)}\}$ are defined inductively as
  \begin{equation*}
    \tilde{\B{f}}^{(\theta+\B{e}_j)}= \sum_{i=1}^d [\partial_{x_j}, \partial_{x_i}^*] \bigl( B_i \B{w}^{(\theta)} - \varepsilon \B{w}^{(\theta+\B{e}_i)} \bigr) + \partial_{x_j} \tilde{\B{f}}^{(\theta)},
  \end{equation*}
  with $\tilde{\B{f}}^{(0)} = \B{f}_m^\varepsilon$. Proceeding similarly to the deduction in \cref{ineq:hsys:dx} we can conclude
  \begin{equation*}
    \norm{\nabla_x^{k+2} \B{c}_m^\varepsilon}^2_{\L^2_\eta} \lesssim \norm{\B{g}_m}^2_{\mathrm{H}^{k+2}_\eta(\T^d)} + \norm{\B{f}_m}^2_{\L^2((0,T); \mathrm{H}^{k+2}_\eta)((0,T)\times \T^d)}.
  \end{equation*}
  This inequality, together with \cref{ineq:hsys:induction:main,ineq:hsys:induction:dt}, completes the induction and hence the proof of the proposition.
\end{proof}

\begin{proposition} \label{prop:existence}
  Let $k\geq 0$ be a fixed integer and assume \cref{eq:assumphsys}.  Then there exists a unique weak solution in the sense of \cref{def:brinkman_weak}, $\mathbf{c}_m \in \mathrm{H}^{k+1}_{t,x}((0,T)\times \T^d)$, to the initial value problem \cref{ivp2}. Furthermore, there exists a constant $C = C(d,T,m,U,k)>0$ such that
  \begin{align*}
    \norm{\B{c}_m}^2_{\mathrm H^{k+1}_{t,x}((0,T)\times \T^d)} \leq C \bigl(\norm{\B{g}_m}^2_{\mathrm H^{k+1}_\eta(\T^d)} + \norm{\B{f}_m}^2_{\mathrm H^{k+1}_{t,x}((0,T)\times \T^d)}\bigr).
  \end{align*}
\end{proposition}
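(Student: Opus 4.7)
The plan is a standard vanishing viscosity argument combined with weak compactness, leveraging the uniform energy estimates already established in \cref{thm:energy}.

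\textbf{Step 1: Extract a weak limit.} By \cref{thm:hsys:approx}, for each $\eps > 0$ there exists a unique smooth solution $\mathbf{c}_m^\eps$ to the viscous problem \cref{ivp3}. By \cref{thm:energy}, the family $\{\mathbf{c}_m^\eps\}_{\eps > 0}$ is uniformly bounded in $\mathrm{H}^{k+1}_{t,x}((0,T)\times \T^d)$ with
\begin{equation*}
  \norm{\B{c}_m^\eps}^2_{\mathrm{H}^{k+1}_{t,x}} \leq C\bigl(\norm{\B{g}_m}^2_{\mathrm{H}^{k+1}_\eta} + \norm{\B{f}_m}^2_{\mathrm{H}^{k+1}_{t,x}}\bigr),
\end{equation*}
where $C$ is independent of $\eps$. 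Since $\mathrm{H}^{k+1}_{t,x}$ is a reflexive Hilbert space, the Banach--Alaoglu theorem yields a subsequence $\eps_j \downarrow 0$ and a limit $\mathbf{c}_m \in \mathrm{H}^{k+1}_{t,x}$ such that $\mathbf{c}_m^{\eps_j} \rightharpoonup \mathbf{c}_m$. The desired energy estimate for $\mathbf{c}_m$ follows from weak lower semicontinuity of the norm.

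\textbf{Step 2: Pass to the limit in the weak formulation.} I would test \cref{ivp3} against an arbitrary $\varphi \in \mathrm{H}^1_\eta(\T^d;\R^n)$, so that for a.e.\ $t$,
\begin{equation*}
  (\partial_t \mathbf{c}_m^{\eps_j} + H\mathbf{c}_m^{\eps_j}, \varphi)_{\L^2_\eta} + \eps_j (\nabla_x \mathbf{c}_m^{\eps_j}, \nabla_x \varphi)_{\L^2_\eta} = (\B{f}_m^{\eps_j}, \varphi)_{\L^2_\eta}.
\end{equation*}
The first and third inner products pass to the limit by weak convergence of $\partial_t \mathbf{c}_m^{\eps_j}$, $\mathbf{c}_m^{\eps_j}$, and strong convergence $\B{f}_m^{\eps_j} \to \B{f}_m$ from standard mollifier properties. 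The viscous term is $O(\eps_j)$ times a uniformly bounded quantity, hence vanishes. This shows $\mathbf{c}_m$ satisfies \cref{ivp2} in the sense of \cref{def:brinkman_weak}. For the initial condition, since $\mathbf{c}_m^{\eps_j}(0) = \B{g}_m^{\eps_j} \to \B{g}_m$ strongly in $\L^2_\eta$, and traces at $t = 0$ are continuous on $\{\B{c} : \B{c}, \partial_t \B{c} \in \L^2((0,T);\L^2_\eta)\}$, the limit satisfies $\mathbf{c}_m(0) = \B{g}_m$.

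\textbf{Step 3: Uniqueness.} Suppose $\mathbf{c}_m^{(1)}, \mathbf{c}_m^{(2)}$ are two weak solutions and set $\mathbf{d} := \mathbf{c}_m^{(1)} - \mathbf{c}_m^{(2)}$. Then $\mathbf{d}$ satisfies the homogeneous equation with zero initial data. Testing with $\mathbf{d}$ itself (justified by the regularity in \cref{def:brinkman_weak}) and using the key skew-symmetry identity \cref{eq:skew}, namely
\begin{equation*}
  (H\mathbf{d}, \mathbf{d})_{\L^2_\eta} = (A\mathbf{d}, \mathbf{d})_{\L^2_\eta} \geq 0,
\end{equation*}
we obtain $\tfrac{1}{2}\tfrac{d}{dt}\norm{\mathbf{d}}^2_{\L^2_\eta} \leq 0$, and since $\mathbf{d}(0) = 0$ we conclude $\mathbf{d} \equiv 0$ by Grönwall's inequality.

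The main obstacle is rigorously justifying the time-trace continuity and the passage to the limit in the term $(\partial_t \mathbf{c}_m^{\eps_j}, \varphi)_{\L^2_\eta}$, which requires careful bookkeeping of the spaces in which weak convergence holds; however, the uniform $\mathrm{H}^{k+1}_{t,x}$ bound with $k \geq 0$ provides more than enough regularity for both, so this is largely a routine verification given the uniform estimates of \cref{thm:energy}.
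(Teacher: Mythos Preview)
Your proposal is correct and follows essentially the same approach as the paper: the paper's proof is a one-sentence sketch that invokes the uniform-in-$\eps$ estimates of \cref{thm:energy} and refers to the vanishing viscosity argument in Evans, Chapter~7.3, while you have spelled out precisely that argument (weak compactness, passage to the limit, uniqueness via the skew-symmetry identity \cref{eq:skew}).
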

\begin{proof}
  Using the uniform in $\varepsilon$ estimates in \cref{thm:energy}, we can argue as in \cite[Chapter 7.3]{evansPartialDifferentialEquations2010a} to obtain the existence and uniqueness of a weak solution. The quantitative estimate follows readily.
\end{proof}

\section{Uniform energy estimates for the Galerkin approximations}\label{sec:uniformestimate}
In this section we prove a uniform energy estimate for the Galerkin approximations $\{u_m\}$ stated in \cref{eq:ansatz} assuming that $\B{c}_m$ solves \cref{ivp2}. Note that by the orthogonality of Hermite functions we have
\begin{equation*}
  \norm{u_m}^2_{\L^2((0,T);\mathrm{H}^k_\eta(\L^2_\mu))} = \sum_{\abs{\alpha}=0}^m \norm{c_m^\alpha}_{\L^2((0,T);\mathrm{H}^k_\eta)}^2.
\end{equation*}
Using this observation and \cref{prop:existence} we can conclude the validity of the following lemma.
\begin{lemma}
  Let $k\geq 0$ be a fixed integer and assume \cref{eq:assumphsys}. Let $u_m$ be of the form \cref{eq:ansatz} with $\B{c}_m$ solving \cref{ivp2}.
  Then
  \begin{equation*}
    \partial_t^i u_m \in\L^2\bigl(0,T; \mathrm{H}^{k+1-i}_\eta(\L^2_\mu) \bigr),\quad \textrm{ for all } i=0, \dots, k+1.
  \end{equation*}
  Furthermore, $v\mapsto \partial_t^i\nabla_x^j u_m(\cdot, \cdot, v)$ is smooth for all  non-negative integers $i$, $j$ such that $i+j\leq k+1$.
\end{lemma}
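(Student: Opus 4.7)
The plan is to exploit two structural features of the ansatz: the orthonormality of $\{\Psi_\alpha\}$ in $\L^2_\mu(\R^d)$, which converts $\L^2_\mu$-norms of $u_m$ into sums of scalar norms of the coefficients $c_m^\alpha$, and the fact that each $\Psi_\alpha$ is a polynomial in $v$, which makes $v$-smoothness automatic. All the analytic content is thus packaged into the existing regularity of $\B{c}_m$ provided by Proposition \ref{prop:existence}.

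\textbf{First.} Apply Proposition \ref{prop:existence} (which is applicable since the hypotheses \cref{eq:assumphsys} are exactly what is required) to obtain
\[
\B{c}_m \in \mathrm{H}^{k+1}_{t,x}((0,T)\times\T^d),
\]
together with the quantitative bound. In particular, for every $0 \le i \le k+1$,
\[
\partial_t^i \B{c}_m \in \L^2\!\bigl(0,T;\mathrm{H}^{k+1-i}_\eta(\T^d;\R^n)\bigr),
\]
and, more generally, $\partial_t^i \nabla_x^j c_m^\alpha \in \L^2((0,T)\times \T^d)$ whenever $i+j \le k+1$.

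\textbf{Second.} Use the observation made just before the statement of the lemma, namely that since $\{\Psi_\alpha\}_{|\alpha|=0}^m$ is orthonormal in $\L^2_\mu(\R^d)$,
\[
\|\partial_t^i u_m\|^2_{\L^2(0,T;\mathrm{H}^{k+1-i}_\eta(\L^2_\mu))} = \sum_{|\alpha|=0}^m \|\partial_t^i c_m^\alpha\|^2_{\L^2(0,T;\mathrm{H}^{k+1-i}_\eta)},
\]
which by the previous step is finite. This gives the first assertion of the lemma.

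\textbf{Third.} For the smoothness in $v$, recall from the construction in \cref{eq:hermite} that each $\Psi_\alpha$ is (up to normalization) a product of one-dimensional Hermite polynomials, hence a polynomial in $v$ of degree $|\alpha|$, and in particular $C^\infty(\R^d)$. Since
\[
\partial_t^i \nabla_x^j u_m(t,x,v) = \sum_{|\alpha|=0}^m \bigl(\partial_t^i \nabla_x^j c_m^\alpha(t,x)\bigr)\,\Psi_\alpha(v)
\]
is a finite linear combination of polynomials in $v$ with coefficients depending only on $(t,x)$, for a.e. fixed $(t,x)$ the map $v \mapsto \partial_t^i \nabla_x^j u_m(t,x,v)$ is a polynomial in $v$ and hence smooth. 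No step requires any genuine difficulty; the only mild subtlety is that $\partial_t^i \nabla_x^j c_m^\alpha(t,x)$ must be understood a.e.\ in $(t,x)$, but this is automatic from the $\mathrm{H}^{k+1}_{t,x}$-regularity established in the first step.
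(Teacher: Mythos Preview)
Your proposal is correct and follows exactly the approach indicated in the paper: the paper does not write out a separate proof but simply states that the lemma follows from the orthogonality identity $\|u_m\|^2_{\L^2((0,T);\mathrm{H}^k_\eta(\L^2_\mu))}=\sum_\alpha \|c_m^\alpha\|^2_{\L^2((0,T);\mathrm{H}^k_\eta)}$ together with \cref{prop:existence}, which is precisely what you unpack in your first two steps. Your third step, observing that each $\Psi_\alpha$ is a polynomial in $v$ and hence smooth, makes explicit the (implicit) reason behind the paper's smoothness claim.
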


Thus the derivation \cref{eq:um:expansion_err} is now established, i.e.
\begin{align}\label{eq:Lum}
  (\partial_t - \mathcal{L}) u_m = f_m + E_m,
\end{align}
where
\begin{align}\label{eq:Lum+}
  f_m = \sum_{\abs{\alpha}=0}^m f^\alpha \Psi_\alpha \text{ and } {E}_m = \sum_{\abs{\alpha} = m} \sum_{i=1}^d \sqrt{\alpha_i+1} \partial_{x_i} c_m^\alpha \Psi_{\alpha+\mathbf{e}_i} .
\end{align}

In order to prove \cref{Thm1} we need, as explained in \cref{sec:outline}, to provide regularity estimates for $u_m$ as well as estimates for the decay of the error $E_m$. This is the content of \cref{lem:Em,thm:regularity}.

\begin{lemma} \label{lem:Em}
  \hphantom{1em}
  \begin{enumerate}
    \item Let $u_m$ be of the form \cref{eq:ansatz}. Then
    \begin{equation*}
      \norm{v\cdot \nabla_x u_m(t,\cdot,\cdot)}_{\L^2_\eta(\L^2_\mu)} \leq C\norm{\nabla_x u_m(t,\cdot,\cdot)}_{\L^2_\eta(\mathrm{H}^1_\mu)},
    \end{equation*}
    for a constant $C$ which depends only on the dimension $d$.
    \item
    Let ${E}_m$ be as defined in \cref{eq:Lum+}. Then for $k \in \N_+$, we have
    \begin{equation}\label{eq:um:Em}
      \norm{{E}_m(t,\cdot,\cdot)}^2_{\L^2_\eta(\L^2_\mu)} \leq \frac{d}{(1+m)^k} \norm{\nabla_x u_m(t,\cdot,\cdot)}^2_{\L^2_\eta(\mathrm{H}^{k+1}_\mu)}.
    \end{equation}
  \end{enumerate}
\end{lemma}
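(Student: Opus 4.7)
\textbf{Plan for the proof of Lemma \ref{lem:Em}.}

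The whole proof is driven by the three recurrence relations in \cref{eq:Hermiterelations} together with the orthonormality of $\{\Psi_\alpha\}$ in $\L^2_\mu$ and the Hermite characterization \cref{eq:fractional_norm} of the $\mathrm{H}^k_\mu$-norm. Once the pointwise--in--$(t,x)$ estimates are established, integration over $x$ with respect to $\eta$ and time are trivial, so I will work at fixed $(t,x)$.

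\emph{Part (1).} I will expand
\begin{align*}
v_i\partial_{x_i}u_m \;=\; \sum_{|\alpha|=0}^{m} (\partial_{x_i}c_m^\alpha)\, v_i\Psi_\alpha \;=\; \sum_{|\alpha|=0}^{m} (\partial_{x_i}c_m^\alpha)\bigl(\sqrt{\alpha_i+1}\,\Psi_{\alpha+\B{e}_i}+\sqrt{\alpha_i}\,\Psi_{\alpha-\B{e}_i}\bigr),
\end{align*}
project onto each $\Psi_\beta$, and use $(a+b)^2\le 2(a^2+b^2)$ to get
\begin{align*}
\norm{v_i\partial_{x_i}u_m}_{\L^2_\mu}^2 \;\le\; 2\sum_\beta (\beta_i+1)\bigl|\partial_{x_i}c_m^{\beta+\B{e}_i}\bigr|^2 + 2\sum_\beta \beta_i\bigl|\partial_{x_i}c_m^{\beta-\B{e}_i}\bigr|^2.
\end{align*}
Re-indexing both sums with $\alpha=\beta\pm\B{e}_i$ yields $\norm{v_i\partial_{x_i}u_m}_{\L^2_\mu}^2\le 4\sum_\alpha (1+|\alpha|)|\partial_{x_i}c_m^\alpha|^2 = 4\norm{\partial_{x_i}u_m}_{\mathrm{H}^1_\mu}^2$. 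Summing over $i=1,\dots,d$ and taking square roots gives the claim with $C=2\sqrt{d}$.

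\emph{Part (2).} For $E_m=\sum_{|\alpha|=m}\sum_{i=1}^d\sqrt{\alpha_i+1}\,\partial_{x_i}c_m^\alpha\,\Psi_{\alpha+\B{e}_i}$ I will group the terms by the index $\gamma:=\alpha+\B{e}_i$, noting that $|\gamma|=m+1$ and that for a fixed $\gamma$ the admissible pairs are $i$ with $\gamma_i\ge 1$ and $\alpha=\gamma-\B{e}_i$, giving $\sqrt{\alpha_i+1}=\sqrt{\gamma_i}$. Orthonormality of $\{\Psi_\gamma\}$ therefore gives
\begin{align*}
\norm{E_m}_{\L^2_\mu}^2 \;=\; \sum_{|\gamma|=m+1}\Biggl|\sum_{i:\gamma_i\ge 1}\sqrt{\gamma_i}\,\partial_{x_i}c_m^{\gamma-\B{e}_i}\Biggr|^2 \;\le\; d\sum_{|\gamma|=m+1}\sum_{i=1}^d \gamma_i\bigl|\partial_{x_i}c_m^{\gamma-\B{e}_i}\bigr|^2,
\end{align*}
by Cauchy--Schwarz in the index $i$. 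Swapping the order of summation and re-indexing via $\alpha=\gamma-\B{e}_i$ (so $|\alpha|=m$ and $\gamma_i=\alpha_i+1\le 1+m$) gives
\begin{align*}
\norm{E_m}_{\L^2_\mu}^2 \;\le\; d\sum_{i=1}^d\sum_{|\alpha|=m}(\alpha_i+1)\bigl|\partial_{x_i}c_m^\alpha\bigr|^2 \;\le\; d(1+m)\sum_{i=1}^d\sum_{|\alpha|=m}\bigl|\partial_{x_i}c_m^\alpha\bigr|^2.
\end{align*}
Finally, write $1+m=(1+m)^{k+1}/(1+m)^k$ and extend the sum to all $|\alpha|\ge 0$ to recognise, via \cref{eq:fractional_norm},
\begin{align*}
\sum_{|\alpha|=m}(1+m)^{k+1}\bigl|\partial_{x_i}c_m^\alpha\bigr|^2 \;\le\; \sum_{\alpha}(1+|\alpha|)^{k+1}\bigl|\partial_{x_i}c_m^\alpha\bigr|^2 \;=\; \norm{\partial_{x_i}u_m}_{\mathrm{H}^{k+1}_\mu}^2.
\end{align*}
Combining the last two displays produces \cref{eq:um:Em}.

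\emph{Main obstacle.} There is no deep obstacle; the only delicate point is the double bookkeeping when passing from a sum indexed by $(\alpha,i)$ with $|\alpha|=m$ to a sum indexed by $\gamma$ with $|\gamma|=m+1$, since a given $\gamma$ can be hit by several pairs $(\alpha,i)$. This is exactly why the Cauchy--Schwarz step loses the constant $d$ (and no more), and why the factor $(\alpha_i+1)=\gamma_i$ appears rather than $(1+|\alpha|)$, so tracking it carefully is what gives the clean power $(1+m)^k$ on the right.
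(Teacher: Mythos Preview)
Your proof is correct and follows essentially the same approach as the paper: both parts rely on the recurrence relations \cref{eq:Hermiterelations}, orthonormality of $\{\Psi_\alpha\}$, and the spectral characterization \cref{eq:fractional_norm} of the $\mathrm{H}^k_\mu$--norm. In fact, your treatment of Part~(2) is slightly cleaner: the paper writes $\norm{E_m}^2_{\L^2_\eta(\L^2_\mu)} = \sum_{|\alpha|=m}(d+m)\norm{\nabla_x c_m^\alpha}^2_{\L^2_\eta}$ as an equality, which is not literally true since distinct pairs $(\alpha,i)$ can yield the same $\Psi_{\alpha+\mathbf{e}_i}$; your explicit Cauchy--Schwarz step in the index $i$ is precisely what is needed to make that passage rigorous, and it produces the same bound $d(1+m)\sum_{|\alpha|=m}\norm{\nabla_x c_m^\alpha}^2$.
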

\begin{proof}
  Using the relation $v_i \Psi_\alpha =  (\sqrt{\alpha_i+1}\Psi_{\alpha+\B{e}_i}+\sqrt{\alpha_i}\Psi_{\alpha-\B{e}_i})$, see \cref{eq:Hermiterelations}, we can conclude that there is a constant $C=C(d)$ such that
  \begin{align*}
    \|v \cdot \nabla_x u_m(t,\cdot,\cdot)\|_{\L^2_\eta(\L^2_\mu)}^2
    =&
    \Big \|\sum_{\abs{\alpha}=0}^m \sum_{i=1}^d \partial_{x_i} c_m^\alpha \Big( \sqrt{\alpha_i+1} \Psi_{\alpha+\mathbf{e}_i} + \sqrt{\alpha_i}\Psi_{\alpha-\mathbf{e}_i} \Big ) \Big \|_{\L^2_\eta(\L^2_\mu)}^2 \\
    \leq&
    C \Big \|\sum_{\abs{\alpha}=0}^m \sum_{i=1}^d \partial_{x_i} c_m^\alpha \sqrt{\alpha_i+1} \Psi_{\alpha+\mathbf{e}_i}\Big \|^2_{\L^2_\eta(\L^2_\mu)} \\
    &+
    C\Big \|\sum_{\abs{\alpha}=0}^m \sum_{i=1}^d \partial_{x_i} c_m^\alpha \sqrt{\alpha_i}\Psi_{\alpha-\mathbf{e}_i} \Big \|_{\L^2_\eta(\L^2_\mu)}^2
    \\
    \leq& C \sum_{\abs{\alpha}=0}^m (1+|\alpha|)\| \nabla_{x} c_m^\alpha\|_{\L^2_\eta}^2 \leq C\norm{\nabla_x u_m(t,\cdot,\cdot) }^2_{\L^2_\eta(\mathrm{H}^1_\mu)} .
  \end{align*}
  This proves the first statement in the lemma. To prove the second statement, using the orthogonality of $\Psi_\alpha$ we have
  \begin{equation*}
    \|{E}_m(t,\cdot,\cdot)\|_{\L^2_\eta(\L^2_\mu)}^2 =  \sum_{\abs{\alpha} = m}(d+m) \| \nabla_{x} c_m^\alpha(t,\cdot)\|_{\L^2_\eta}^2 \leq d \sum_{\abs{\alpha}=m} (1+m)\norm{\nabla_x c_m^\alpha(t,\cdot)}^2_{\L^2_\eta}.
  \end{equation*}
  We then observe that by definition \cref{eq:fractional_norm},
  \begin{align*}
    \sum_{\abs{\alpha}=m} (1+m) \norm{\nabla_x c_m^\alpha(t,\cdot)}^2_{\L^2_\eta}
    &=
    \frac{(1+m)^{k}}{(1+m)^k} \sum_{\abs{\alpha}=m} (1+m) \norm{\nabla_x c_m^\alpha(t,\cdot)}^2_{\L^2_\eta}
    \\
    &\leq \frac{1}{(1+m)^k}
    \norm{\nabla_x u_m(t,\cdot,\cdot)}^2_{\L^2_\eta(\mathrm{H}^{k+1}_\mu)}.
  \end{align*}
  Combining the two inequalities above we get the inequality in \cref{eq:um:Em}.
\end{proof}

To control the norms appearing on the right hand side in the inequalities in \cref{lem:Em}, we need to prove regularity in $v$ of $u_m$.
Previously we proved regularity in the $x$ variable using a standard bootstrap technique, we will now continue to prove the regularity in the velocity variable phrased in our spectral space.
Recall that from \cref{eq:fractional_norm} that the regularity in $v$ corresponds to the decay of $\norm{c_m^\alpha}_{\L^2_\eta}$ w.r.t.~$\abs{\alpha}$.
Furthermore, differentiation w.r.t.~$v$ can in the spectral space essentially be written as multiplication with powers of the matrix $A$.
When deriving the energy estimates for $(I+A)^s \B{c}_m$, the cancellation seen in \cref{eq:skew} will be destroyed, specifically the commutator $[B_i,(I+A)^s] \neq 0$ when $s > 0$. However, as we will see in the proof, the remainder term will be of lower order.

\begin{proposition} \label{thm:regularity}
  Let the assumptions in
  \cref{eq:assump:um1}
  hold for a fixed integer $k\geq0$
  and let $u_m$ be of the form \cref{eq:ansatz} with $\B{c}_m$ solving \cref{ivp2} with initial data $\B{g}_m$ and source term $\B{f}_m$.
  Then there exists a constant $C=C(d,T,\norm{\nabla_x^2 U}_\infty, \allowbreak \dots,\allowbreak \norm{\nabla_x^{k+1}U}_\infty,k)>0$, independent of $m$, such that
  \begin{equation*}
    \norm{u_m}^2_{\mathrm{H}^k_{t,x,v}((0,T)\times\T^d\times\R^d)} \leq C\bigl(\norm{g}^2_{\mathrm{H}^k_{x,v}(\T^d\times\R^d)} + \norm{f}^2_{\mathrm{H}^k_{t,x,v}((0,T)\times\T^d\times\R^d)}\bigr).
  \end{equation*}
\end{proposition}
\begin{proof}  Throughout this proof we use the convention that $a \lesssim b$ means that $a\leq Cb$ for some positive constant $C$ which may depend on $d$, $T$, $\norm{\nabla_x^2 U}_\infty, \allowbreak \dots,\allowbreak \norm{\nabla_x^{k+1}U}_\infty$, $k$ but which is independent of $m$.  To start the argument we first observe that
  \begin{align}\label{esta}
    \norm{ u_m }^2_{\mathrm{H}^k_{t,x,v}((0,T)\times \T^d\times\R^d)} =& \sum_{i+j+l = 0}^k \norm{\partial_t^i \nabla_x^j \nabla_v^{2l} u_m}^2_{\L^2((0,T);\L^2_\eta(\T^d;\L^2_\mu(\R^d)))}\notag\\
    =& \sum_{i=0}^k \norm{\partial_t^i u_m}^2_{\L^2((0,T);\mathrm{H}^{k-i}_{x,v}(\T^d\times\R^d))}.
  \end{align}
  Recall that  $g_m = \sum_{\abs{\alpha}=0}^m (g,\Psi_\alpha)_{\L^2_\mu}\Psi_\alpha$ and $f_m=\sum_{\abs{\alpha}=0}^m (f,\Psi_\alpha)_{\L^2_\mu}\Psi_\alpha$. Hence, given $k$ we have that $g_m$ and $f_m$ satisfy
  \cref{eq:assump:um1}.
  We first estimate the term in \cref{esta} which corresponds to
  $i=0$. We claim  that
  \begin{equation}\label{eq:um:claim1}
    \sup_{0\leq t\leq T}\norm{u_m(t,\cdot,\cdot)}^2_{\mathrm{H}^k_{x,v}} \lesssim \bigl(\norm{g_{m}}^2_{\mathrm{H}^k_{x,v}(\T^d\times\R^d)} + \norm{f_{m}}^2_{\mathrm{H}^k_{t,x,v}((0,T)\times\T^d\times\R^d)}\bigr)
  \end{equation}
  For efficiency we introduce, for $s,j$ non-negative integers,
  \begin{equation*}
    N(s,j, t):= \norm{\nabla_x^j u_m(t,\cdot,\cdot)}^2_{\L^2_\eta(\mathrm{H}^{2s}_\mu)} \text{ and } F(s,j,t) := \norm{\nabla_x^j f_{m}(t,\cdot,\cdot)}^2_{\L^2_\eta(\mathrm{H}^{2s}_\mu)}.
  \end{equation*}
  To prove the claim \cref{eq:um:claim1}, it suffices to prove that
  \begin{equation}\label{eq:um:sum}
    \partial_t \sum_{s+j\leq k } N(s,j,t) \lesssim \sum_{s+j\leq k} N(s,j,t) + \sum_{s+j\leq k} F(s,j,t).
  \end{equation}
  Indeed we just need to apply Gr\"onwall's inequality to \cref{eq:um:sum} to conclude \cref{eq:um:claim1}.

  To prove \cref{eq:um:sum} we note that
  \begin{equation*}
    N(s,j,t) = \sum_{\abs{\theta} = j}\norm{(I+A)^{s} \nabla_x^\theta \B{c}_m}^2_{\L^2_\eta},
  \end{equation*}
  a similar statement holds for $F$.
  Hence we need to go back to the equation in \cref{ivp2}. Similar to the proof of \cref{thm:energy}, we let  for a multi-index
  $\theta$,
  \begin{align}\label{eq:um:bj}
    \B{w}^{(\theta)} &= \nabla_{x}^\theta \B{c}_m,\quad \B{g}^{(\theta)} =  \nabla_{x}^{\theta} \B{g}_m,\quad \B{f}^{(\theta)} = \nabla_x^\theta \B{f}_{m},\notag \\
    \B{b}^{(\theta+\B{e}_j)} &= \sum_{i=1}^d [\partial_{x_j}, \partial_{x_i}^\ast] B_i \B{w}^{(\theta)} + \nabla_{x_j} \B{b}^{(\theta)},
  \end{align}
  and $\B{b}^{(0)} = 0$. In addition, for a non-negative integer $j$  we introduce
  \begin{equation*}
    \norm{(I+A)^{s} \B{w}^{(j)}}^2_{\L^2_\eta} := \sum_{\abs{\theta}=j} \norm{(I+A)^{s} \B{w}^{(\theta)}}^2_{\L^2_\eta},
  \end{equation*}
  and similarly for $\norm{(I+A)^{s} \B{b}^{(j)}}^2_{\L^2_\eta}$ and $\norm{(I+A)^{s} \B{f}^{(j)}}^2_{\L^2_\eta}$.
  Then $\B{w}^{(\theta+\B{e}_j)}$ solves the problem
  \begin{align*}
    \begin{cases}
      \partial_t \B{w}^{(\theta+\B{e}_j)} + H \B{w}^{(\theta+\B{e}_j)} = \B{f}^{(\theta+\B{e}_j)}+\B{b}^{(\theta+\B{e}_j)} & \text{in } (0,T)\times\T^d,\\
      \hphantom{\partial_t \B{w}^{(\theta+\B{e}_j)} + H }  \B{w}^{(\theta+\B{e}_j)}  = \B{g}^{(\theta+\B{e}_j)} & \text{on } \{t=0\}\times\T^d.
    \end{cases}
  \end{align*}
  We will use the following lemma.

  \begin{lemma}\label{lem:um:iteration}
    Let $k,u_m,\B{f}_m,\B{g}_m$ be as in \cref{thm:regularity},
    and assume that $s,j$ be nonnegative integers such that  $s\leq k$ and $j\leq k$. Then it holds
    \begin{align*}
      \partial_t \frac{1}{2} \norm{(I+A)^{s} \B{w}^{(j)}}^2_{\L^2_\eta} \lesssim & \norm{(I+A)^s \B{w}^{(j)}}^2_{\L^2_\eta}
      + \mathbb{I}_{[s > 0]}\norm{(I+A)^{s-1} \B{w}^{(j+1)}}^2_{\L^2_\eta}\notag \\
      &+ \norm{(I+A)^{s-1/2} \B{b}^{(j)}}^2_{\L^2_\eta} + \norm{(I+A)^{s} \B{f}^{(j)}}^2_{\L^2_\eta}.
    \end{align*}
  \end{lemma}
  Assuming  \cref{lem:um:iteration} we proceed with the proof of \cref{eq:um:sum} and of \cref{thm:regularity}. First we note that by \cref{eq:um:bj} and the definition of the matrices $B_i$ we have by an iteration that for $s,j \geq 0$, it holds
  \begin{equation}\label{eq:b:bound}
    \norm{(I+A)^{s-1/2} \B{b}^{(j)}} \lesssim \sum_{r=0}^{j-1} N(s,r,t).
  \end{equation}
  Now by \cref{lem:um:iteration,eq:b:bound} we have for $s,j \geq 0$ that
  \begin{align}\label{eq:um:master}
    \partial_t N(s,j,t) \lesssim  N(s,j,t) + \mathbb{I}_{[s>0]}N(s-1,j+1,t)
    + \sum_{r=0}^{j-1} N(s, r,t) + F(s,j,t).
  \end{align}
  Summing the inequalities in \cref{eq:um:master} for $s+j\leq k$  we obtain after some reindexing inequality \cref{eq:um:sum}, hence the claim \cref{eq:um:claim1} is proved.

  Finally, by differentiating w.r.t.~$t$, for all $i\geq 1$, we see that the term $\partial^i_t u_m$ is of the form \cref{eq:ansatz} but with $\B{c}_m$ replaced by $\partial^i_t \B{c}_m$. It's also easy to check that $\partial^i_t \B{c}_m$ satisfies \cref{ivp2} with initial condition $\partial^i_t \B{c}_m(0) = \partial_t^{i-1} \B{f}_m-H\partial_t^{i-1}\B{c}_m(0)$, $\partial_t^0 \B{c}_m(0)=\B{g}_m(0)$, and with source term $\partial_t^i \B{f}_m$.
  Therefore the initial data $\partial_t^i u_m(0)$ and the source term $\partial_t^i f_m$ satsify the assumptions in
  \cref{eq:assump:um1}
  for order $k-i$. Finally, by an induction on $0\leq i\leq k$, and by recalling the inequality
  \begin{equation*}
    \norm{g_m}^2_{\mathrm{H}^k_{x,v}}+\norm{f_m}^2_{\mathrm{H}^k_{t,x,v}}\lesssim \norm{g}^2_{\mathrm{H}^k_{x,v}}+\norm{f}^2_{\mathrm{H}^k_{t,x,v}},
  \end{equation*}
  the proof of the proposition is complete.
\end{proof}

\subsection{Proof of \cref{lem:um:iteration}} 
Using \cref{prop:existence} and a calculation we see that the formula
\begin{align}\label{eq:um:energy}
  \partial_t \frac{1}{2} \norm{(I+A)^{s} \B{w}^{(\theta)}}^2_{\L^2_\eta}
  = \bigl( (I+A)^{2s}\B{w}^{(\theta)}, -H \B{w}^{(\theta)} + \B{f}^{(\theta)}+\B{b}^{(\theta)}  \bigr)_{\L^2_\eta},
\end{align}
is valid for any $s\geq0$ and $\abs{\theta} = j\geq 0$. We claim that for $\delta,\varepsilon\in(0,1)$ there exists $C_1=C_1(s)$ such that $C_1(0) = 0$ and
\begin{align}\label{eq:um:energyestimate}
  \partial_t \frac{1}{2} \norm{(I+A)^{s} \B{w}^{(\theta)}}^2_{\L^2_\eta} \leq& -\norm{A^{1/2}(I+A)^s \B{w}^{(\theta)}}^2_{\L^2_\eta}\notag \\
  &+ \bigl( C_1 \delta + \varepsilon \bigr)\norm{(I+A)^{s+1/2} \B{w}^{(\theta)}}^2_{\L^2_\eta}+ \norm{(I+A)^{s} \B{w}^{(\theta)}}^2_{\L^2_\eta}\notag\\
  &+\frac{C_1 }{\delta} \sum_{i=1}^d \norm{(I+A)^{s-1} \B{w}^{(\theta+\B{e}_i)}}^2_{\L^2_\eta}\notag \\
  &+ \frac{1}{\varepsilon} \norm{ (I+A)^{s-1/2} \B{b}^{(\theta)}}^2_{\L^2_\eta} + \norm{ (I+A)^{s} \B{f}^{(\theta)}}^2_{\L^2_\eta}.
\end{align}
Furthermore, we claim that if we choose $\delta=\delta(s)$ and $\varepsilon=\varepsilon(s)$ small enough and so that $C_1\delta + \varepsilon < 1/2$, then
\begin{align}\label{eq:um:claim}
  &-\norm{A^{1/2}(I+A)^s \B{w}^{(\theta)}}^2_{\L^2_\eta} + \bigl( C_1 \delta + \varepsilon \bigr)\norm{(I+A)^{s+1/2} \B{w}^{(\theta)}}^2_{\L^2_\eta}\notag\\
  &\leq \frac{1}{2} \norm{(I+A)^s \B{w}^{(\theta)}}^2_{\L^2_\eta}.
\end{align}
Indeed, by reformulating the above inequality using multi-indices we  find that the coefficient of $\norm{(w^{(\theta)})^\alpha}^2_{\L^2_\eta}$ in
 \cref{eq:um:claim} equals
\begin{equation*}
  - \abs{\alpha} (1+\abs{\alpha})^{2s} + (C_2 \delta  + \varepsilon) (1+\abs{\alpha})^{2s+1}.
\end{equation*}
Letting $l:=\abs{\alpha}$, we have
\begin{equation*}
  - l(1+l)^{2s} + \frac{1}{2} (1+l)^{2s+1} \leq \frac{1}{2} (1+l)^{2s}
\end{equation*}
which proves \cref{eq:um:claim}. The lemma is then proved by substituting \cref{eq:um:claim} into \cref{eq:um:energyestimate} and applying Grönwall's inequality. The claim in \cref{eq:um:energyestimate} remains to be proven.

To prove the claim \cref{eq:um:energyestimate} we note that
\begin{equation*}
  \bigl( (I+A)^{2s}\B{w}^{(\theta)}, - H \B{w}^{(\theta)} \bigr)_{\L^2_\eta}=-\text{I}+\text{II},
\end{equation*}
where
\begin{align*}
  \text{I}:=\sum_{i=1}^d
  \norm{A^{1/2}(I+A)^s \B{w}^{(\theta)}}^2_{\L^2_\eta},\qquad
  \text{II}:=\sum_{i=1}^d \bigl( \partial_{x_i} \B{w}^{(\theta)}, M_i \B{w}^{(\theta)} \bigr)_{\L^2_\eta}
\end{align*}
and $M_i:= \bigl[ (I+A)^{2s} , B_i \bigr]$.
Furthermore, letting
\begin{align*}
  \text{III} := \bigl((I+A)^{2s} \B{w}^{(\theta)}, \B{b}^{(\theta)} \bigr)_{\L^2_\eta},\qquad
  \text{IV} := \bigl((I+A)^{2s} \B{w}^{(\theta)}, \B{f}^{(\theta)} \bigr)_{\L^2_\eta},
\end{align*}
we see that we can rewrite  \cref{eq:um:energy} as
\begin{equation*}
  \partial_t \frac{1}{2} \norm{(I+A)^{s} \B{w}^{(\theta)}}^2_{\L^2_\eta} = -\text{I}+\text{II}+\text{III}+\text{IV}.
\end{equation*}
We need to estimate the terms II, III and IV. To estimate the term II we see, by expanding it in terms of multi-indices, that
\begin{align}\label{eq:um:II}
  \text{II} =  \sum_{i=1}^d \sum_{\abs{\alpha}=0}^m \sqrt{\alpha_i} \bigl( (1+\abs{\alpha})^{2s} - \abs{\alpha}^{2s}\bigr) \bigl( w^{(\theta)}_\alpha , \partial_{x_i} w^{(\theta)}_\alpha \bigr)_{\L^2_\eta}.
\end{align}
Observe that the coefficient in \cref{eq:um:II}, $\sqrt{\alpha_i} \bigl( (1+\abs{\alpha})^{2s} - \abs{\alpha}^{2s}\bigr)$, is of order $\abs{\alpha}^{2s-\frac12} = \abs{\alpha}^{s-1}\abs{\alpha}^{s+\frac12}$. We claim that
\begin{align}\label{ineq:um:ii}
  \text{II} \leq \frac{C_1}{\delta} \sum_{i=1}^d \norm{(I+A)^{s-1} \B{w}^{(\theta+\B{e}_i)}}^2_{\L^2_\eta} +  C_1 \delta  \norm{(I+A)^{s+\frac12} \B{w}^{(\theta)}}^{2}_{\L^2_\eta},
\end{align}
where $C_1 = C_1(s)$, $C_1(0) = 0$ and $\delta > 0$ is a degree of freedom. Indeed, for all $l\in \N$ we have
\begin{equation*}
  \abs{(1+l)^{2s}-l^{2s}} \leq C_1 (1+l)^{2s-1},
\end{equation*}
for a constant $C_1 = C_1(s)$, satisfying $C_1(0) = 0$. Applying this to \cref{eq:um:II} we have for $i=1,\dots,d$ that
\begin{equation*}
  \abs{\sqrt{\alpha_i} \bigl( (1+\abs{\alpha})^{2s} - \abs{\alpha}^{2s}\bigr)}\leq \sqrt{1+\abs{\alpha}}\bigl( (1+\abs{\alpha})^{2s} - \abs{\alpha}^{2s}\bigr) \leq C_1 (1+\abs{\alpha})^{2s-\frac12}.
\end{equation*}
Note that $2s-\frac12 = (s-1) + (s+ \frac12)$, by Young's inequality with $\delta > 0$ we have
\begin{equation*}
  \text{II} \leq \frac{C_1}{\delta} \sum_{i=1}^d \sum_{\abs{\alpha}=0}^m \norm{(1+\abs{\alpha})^{s-1} \partial_{x_i} w_\alpha^{(\theta)}}^2_{\L^2_\eta} + C_1 \delta \sum_{i=1}^d \sum_{\abs{\alpha}=0}^m  \norm{(1+\abs{\alpha})^{s+\frac12} w_\alpha^{(\theta)}}^2_{\L^2_\eta},
\end{equation*}
which proves \cref{ineq:um:ii}. To estimate the term III we proceed analogously, observing that $(I+A)^{2s} = (I+A)^{s+\frac12}(I+A)^{s-\frac12}$, and using Young's inequality, we obtain, for any $\varepsilon > 0$,
\begin{align}\label{ineq:um:iii}
  \text{III} \leq \varepsilon \norm{(I+A)^{s+\frac12} \B{w}^{(\theta)}}^2_{\L^2_\eta} + \frac{1}{\varepsilon} \norm{ (I+A)^{s-\frac12} \B{b}^{(\theta)}}^2_{\L^2_\eta}.
\end{align}
For term IV we have
\begin{align}\label{ineq:um:iv}
  \text{IV} \leq \norm{(I+A)^{s} \B{w}^{(\theta)}}^2_{\L^2_\eta} +  \norm{ (I+A)^{s} \B{f}^{(\theta)}}^2_{\L^2_\eta}.
\end{align}
Combining \cref{ineq:um:ii,ineq:um:iii,ineq:um:iv} we get the claim in \cref{eq:um:energyestimate}.

\section{Proof of \cref{Thm1} and \cref{cor:l2initial}}\label{sec:bddresult}
To start the proof of  \cref{Thm1} we first recall from \cref{eq:Lum} that $u_m$ satisfies
\begin{equation*}
  (\partial_t - \mathcal{L}) u_m = f_m+{E}_m.
\end{equation*}
Furthermore, using \cref{thm:regularity} we see there is a subsequence, still denoted by $\{u_m\}$, that converges weakly to a function $u\in\mathrm{H}^{k}_{t,x,v}((0,T)\times\T^d\times\R^d)\subset\mathrm{H}^1_{\mathrm{kin}}((0,T)\times\T^d\times\R^d)$.
Using all this information, it follows by standard arguments, see \cite[Sec 7.2.2.c]{evansPartialDifferentialEquations2010a} for instance, that $u$ is a weak solution to the equation $(\partial_t -\mathcal{L})u = f$,  $u(0, x, v) = g(x,v)$ in the sense of \cref{def:weak}. The regularity estimates claimed in \cref{Thm1} follow easily from \cref{thm:regularity}. By linearity of the equation, these estimates also imply the uniqueness of $u$.

To prove the error estimates in \cref{Thm1} we introduce
\begin{equation*}
  \hat{u}_m := \sum_{\abs{\alpha}=0}^m \hat{c}_m^\alpha \Psi_\alpha \textrm{ with } \hat{c}_m^\alpha = (u, \Psi_\alpha)_{\L^2_\mu},
\end{equation*}
and we decompose the approximation error as
\begin{equation*}
  u - u_m= (u - \hat{u}_m) + (\hat{u}_m - u_m),
\end{equation*}
where $u_m$ is our approximating solution \cref{eq:ansatz} solving \cref{ivp2}. Defining $\hat{\mathbf{c}}_m:=(\hat{c}_m^{\alpha^1}, \dots,\allowbreak \hat{c}_m^{\alpha^{n(m)}})$, and substituting $\hat{\mathbf{c}}_m -\mathbf{c}_m$ into the equation \cref{ivp2},  we get
\begin{equation*}
  \partial_t (\hat{\mathbf{c}}_m -\mathbf{c}_m) + H (\hat{\mathbf{c}}_m -\mathbf{c}_m) = \hat{\mathbf{E}}_m,
\end{equation*}
where $\hat{\mathbf{E}}_m = (0, \dots,0, \sum_{\abs{\alpha}=m}\sum_{i=1}^{d}\sqrt{\alpha_i +1} \partial_{x_i}^* \hat{c}_m^{\alpha+\mathbf{e}_i})$.
Defining
\begin{equation*}
  \hat{E}_m = \sum_{\abs{\alpha}=m}\sum_{i=1}^{d}\sqrt{\alpha_i +1} \partial_{x_i}^* \hat{c}_m^{\alpha+\mathbf{e}_i} \Psi_{\alpha},
\end{equation*}
it then follows from \cref{thm:regularity}, and the fact that $\hat{u}_m(0)-u_m(0) = 0$, that 
\begin{equation*}
  \norm{\hat{u}_m - u_m}^2_{\mathrm{H}^l_{t,x,v}}\leq C \norm{\hat{E}_m}^2_{\mathrm{H}^l_{t,x,v}}, 
\end{equation*} 
for all $l\geq 0$. Then, by a slight modification to the proof of \cref{lem:Em} we deduce that
\begin{align}\label{eq:emhat}
  \norm{\hat{E}_m}^2_{\mathrm{H}^l_{t,x,v}} \leq C\frac{1}{(1+m)^{2s+1}} \norm{\hat{u}_{m+1}}^2_{\mathrm{H}^{l+2+s}_{t,x,v}} \leq C\frac{1}{(1+m)^{2s+1}} \norm{u}^2_{\mathrm{H}^{l+2+s}_{t,x,v}},
\end{align}
for some constant $C=C(d,T,\norm{\nabla_x^2 U}_\infty,\dots, \norm{\nabla_x^{k+1}U}_\infty,k) \geq 1$. Finally, for $0\leq l\leq k-2$ we have
\begin{align*}
  \norm{u-u_m}^2_{\mathrm{H}^l_{t,x,v}} \leq& \norm{u-\hat{u}_m}^2_{\mathrm{H}^l_{t,x,v}} + \norm{\hat{u}_m -u_m}^2_{\mathrm{H}^l_{t,x,v}} \nonumber \\
  \leq& \left (\frac{1}{(1+m)^{k-l}} \right )^2 \norm{u}^2_{\mathrm{H}^k_{t,x,v}} + C \left (\frac{1}{(1+m)^{(k-l-2)+\frac{1}{2}}} \right )^2\norm{u}^2_{\mathrm{H}^k_{t,x,v}}\\
  \leq& C \left (\frac{1}{(1+m)^{(k-l-2)+\frac{1}{2}}} \right )^2 \left(\norm{g}^2_{\mathrm{H}^k_{x,v}} + \norm{f}^2_{\mathrm{H}^k_{t,x,v}}\right),
\end{align*}
which completes the proof of \cref{Thm1}.

\begin{remark}\label{rmk:sharp}
  The exponent $(k-l-2)+\frac{1}{2}$ appearing in \cref{Thm1} is sharp in the following sense. Fix $m$, and assume, without loss of generality,  that $d=1$. It is easy to verify that $\frac{1}{t}\norm{u}^2_{\mathrm{H}^k_{t,x,v}}\to \norm{u(0)}^2_{\mathrm{H}^k_{x,v}}$ as $t\to 0$.
  Choose the initial data $g(x, v) = g_{m+1}(x) \psi_{m+1}(v)$ where $g_{m+1}(x)$ is an arbitrary function that is sufficiently regular. Then 
  \begin{equation}\label{eq:emhatsharp}
    \norm{\sqrt{m+1}\partial_x^* g_{m+1} \psi_m }^2_{\mathrm{H}^l_{t,x,v}} \approx \norm{\frac{g}{\sqrt{1+m}}}^2_{\mathrm{H}^{l+2}_{t,x,v}} \approx \bigg (\frac{1}{(1+m)^{s+\frac{1}{2}}} \bigg )^2 \norm{g}^2_{\mathrm{H}^{l+2+s}_{x,v}}
  \end{equation}
  where $a \approx b$ means there exist some constant $C$ that $C^{-1}a\leq b \leq Ca$. Now, dividing \cref{eq:emhat} by $t$ and letting $t\to 0$, using also \cref{eq:emhatsharp}, we see the exponent $(k-l-2)+\frac{1}{2}$ is sharp.
\end{remark}

\subsection{Proof of \cref{cor:l2initial}}
Let $\{g_\varepsilon(x,v)\}$ and $\{f_\varepsilon(t,x,v)\}$, $\varepsilon > 0$, be sequences of smooth functions such that $g_\varepsilon \to g$ in $\L^2_\eta(\L^2_\mu)$ and $f_\varepsilon\to f$ in $\L^2((0,T);\L^2_\eta(\L^2_\mu))$ as $\varepsilon \to 0$.
Then by \cref{Thm1}, for each $\varepsilon > 0$ there exists a unique weak solution $u_\varepsilon$ in the sense of \cref{def:weak} to the equation $(\partial_t-\mathcal{L}) u_\varepsilon =  f_\varepsilon$ with initial data given by $g_\varepsilon$. It follows that
\begin{align}\label{energy}
  \partial_t \frac{1}{2} \norm{u_\varepsilon}^2_{\L^2_\eta(\L^2_\mu)} \leq - \norm{\nabla_v u_\varepsilon}^2_{\L^2_\eta(\L^2_\mu)} + \frac{1}{2}\norm{u_\varepsilon}^2_{\L^2_\eta(\L^2_\mu)} + \frac{1}{2}\norm{f_\varepsilon}^2_{\L^2_\eta(\L^2_\mu)}.
\end{align}
Using Grönwall's inequality we have
\begin{align*}
  \max_{0\leq t\leq T} \|u_\varepsilon\|^2_{\L^2_\eta(\L^2_\mu)}
  \lesssim \norm{g}^2_{\L^2_\eta(\L^2_\mu)}+ \norm{f}^2_{\L^2((0,T);\L^2_\eta(\L^2_\mu))}.
\end{align*}
Furthermore,  integrating \cref{energy} against $t$ we get
\begin{multline*}
  \frac{1}{2}\|u_\varepsilon(T)\|^2_{\L^2_\eta(\L^2_\mu)}
  -
  \frac{1}{2}\|u_\varepsilon(0)\|^2_{\L^2_\eta(\L^2_\mu)}
  +
  \|\nabla_v u_\varepsilon\|^2_{\L^2((0,T);\L^2_\eta(\L^2_\mu))}
  \\
  \leq
  \frac{1}{2} \bigl( \norm{u_\varepsilon}^2_{\L^2((0,T);\L^2_\eta(\L^2_\mu))}
  +
  \norm{f_\varepsilon}^2_{\L^2((0,T);\L^2_\eta(\L^2_\mu))}\bigr).
\end{multline*}
Hence
\begin{equation*}
  \|\nabla_v u_\varepsilon\|^2_{\L^2((0,T);\L^2_\eta(\L^2_\mu))} \leq C(T) \bigl( \|g\|^2_{\L^2_\eta(\L^2_\mu)}+ \norm{f}^2_{\L^2((0,T);\L^2_\eta(\L^2_\mu))}\bigr).
\end{equation*}
Next, we rewrite the equation \cref{eq:kfpconj}
\begin{equation*}
  \partial_t u_\varepsilon +v\cdot \nabla_x u_\varepsilon -\nabla_x U\cdot \nabla_v u_\varepsilon = \nabla^*_v \nabla_v u_\varepsilon + f.
\end{equation*}
From the characterization of $\mathrm{H}^{-1}_\mu$, the proof is the same as in the Lebesgue setting, see for instance \cite[Sec 5.9.1]{evansPartialDifferentialEquations2010a}, it holds
\begin{multline*}
  \|\partial_t u_\varepsilon +v\cdot \nabla_x u_\varepsilon -\nabla_x U\cdot \nabla_v u_\varepsilon\|_{\L^2((0,T);\L^2_\eta(\mathrm{H}^{-1}_\mu))}
  \\
  \lesssim
  \|\nabla_v u_\varepsilon\|_{\L^2((0,T);\L^2_\eta(\L^2_\mu))}
  +
  \norm{f}_{\L^2((0,T);\L^2_\eta(\L^2_\mu))}.
\end{multline*}
Hence there exists a $u\in \mathrm{H}^1_{\mathrm{kin}}$ such that after passing to a subsequence $u_\varepsilon \rightharpoonup u$ in $\L^2_\eta(\mathrm{H}^1_\mu)$ with $\partial_t u_\varepsilon + v\cdot\nabla_x u_\varepsilon \rightharpoonup \partial_t u+ v\cdot\nabla_x u$ in $\L^2_\eta(\mathrm{H}^{-1}_\mu)$.
Uniqueness follows easily from the estimate in \cref{energy} and linearity.

\section{Proof of \cref{Thm2}}\label{sec:global}
In this section we consider $D=\R^d$ and we prove \cref{Thm2} assuming \cref{assump:global:potential} and \cref{assump:global:potential+}. To start the construction of the family of smooth periodic approximations $\{U_R\}$ of $U$, let $Q_R$ denote the cube with side length $2R$ centered at the origin. Let $\chi(r): \R \to \R$ be smooth cutoff function such that $\chi = 1$ on $[0,1]$, $0\leq \chi \leq 1$ on $(1,2)$ and $\chi = 0$ on $[2,\infty)$. For $R>0$, we introduce
\begin{equation*}
  \varphi_R(x) := \chi(\abs{x}/{2R}).
\end{equation*}
It follows that $\varphi_R = 1$ on $B_{R/2}$, $0\leq \varphi_R\leq 1$ on $B_R\setminus B_{R/2}$ and $\varphi_R = 0$ when $\abs{x} \geq R$. Using $\varphi_R$ we define
\begin{align}\label{def:ur}
  U_R (x) := U(x)\varphi_R(x), \text{ for } x\in Q_R,
\end{align}
and we extend $U_R$ to be a periodic function of period $2R$ on $\R^d$. In the following lemma we collect some elementary properties of the construction and $U_R$.
\begin{lemma}\label{lem:global:tech}
  Let $U(x)$ satisfy the assumption \cref{assump:global:potential} and let $U_R$ be given by \cref{def:ur}. Then $U_R(x)\leq U(x)$ on $\R^d$.
  In addition, there exists an $R_0 > 1$ such that for $R\geq R_0$, then
  \begin{equation*}
    \abs{\nabla_x^k U_R(x)}\leq C_k
  \end{equation*}
  on $\R^d$ for all $k\geq 2$ and for some constants $\{C_k\}$ which are independent of $R$. Furthermore, there exists a constant $C_1$ independent of $R$ such that for $R \geq R_0$
  \begin{equation*}
    \abs{\nabla_x (U-U_R)(x)}\leq C_1 \abs{\nabla_x U(x)}
  \end{equation*}
  on $\R^d \setminus B_{R_0}$.
\end{lemma}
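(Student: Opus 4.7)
Fix $R_1>0$ such that $\supp P\subset B_{R_1}$ and choose $R_0>1$ large enough that $R_0\geq 2R_1$ and $aR_0^2\geq 2\|P\|_\infty$. All three conclusions will hold for $R\geq R_0$, including the first (which in fact can fail for very small $R$ when $P$ is nonzero). For $R\geq R_0$, the fundamental cube $Q_R$ splits into three regions whose properties underlie every estimate: on $B_{R/2}$, $\varphi_R\equiv 1$ and $\supp P\subset B_{R/2}$, so $U_R=U$; on $B_R\setminus B_{R/2}$, $P\equiv 0$ and $U(x)=a|x|^2\geq a(R/2)^2>0$; on $Q_R\setminus B_R$, $\varphi_R\equiv 0$ so $U_R\equiv 0$, while $U=a|x|^2\geq aR^2$.

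\textbf{Comparison $U_R\leq U$.} On $Q_R$ the decomposition yields the inequality directly (equality on $B_{R/2}$, and $0\leq U_R=U\varphi_R\leq U$ elsewhere since $U\geq 0$ and $\varphi_R\in[0,1]$). For $x\notin Q_R$, I write $x=y+2Rk$ with $y\in Q_R$ and $k\in\Z^d\setminus\{0\}$; by periodicity $U_R(x)=U(y)\varphi_R(y)$, and $|x|\geq R\geq R_1$ forces $P(x)=0$. A direct computation using $|x|^2-|y|^2=4R^2|k|^2+4Ry\cdot k$ gives
\begin{align*}
  U(x)-U_R(x)=a|y|^2\bigl(1-\varphi_R(y)\bigr)+4aR^2|k|^2+4aRy\cdot k-P(y)\varphi_R(y).
\end{align*}
The first term is nonnegative; the diagonal plus cross terms satisfy $4aR^2|k|^2+4aRy\cdot k\geq 4aR|k|\bigl(R|k|-|y|\bigr)\geq 0$ whenever $\varphi_R(y)\neq 0$ (since then $|y|\leq R$). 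Finally, $P(y)\varphi_R(y)\neq 0$ only when $|y|\leq R_1\leq R/2$, and in that regime the combination of diagonal and cross terms is at least $4aR(R-R_1)\geq 2aR^2\geq 2\|P\|_\infty$, which absorbs $|P(y)\varphi_R(y)|\leq\|P\|_\infty$.

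\textbf{Derivative bounds.} For $|\nabla^k U_R|\leq C_k$ with $k\geq 2$, I apply Leibniz to $U_R=U\varphi_R$. Since $\nabla^j U$ is bounded for $j\geq 2$, $|\nabla U|=O(R)$ and $|U|=O(R^2)$ on $\supp\varphi_R\subset B_R$, and $|\nabla^{k-j}\varphi_R|\leq CR^{-(k-j)}$, each summand is $O(R^{2-k})$, which is $O(1)$ precisely for $k\geq 2$. Moreover, $\nabla^j P$ with $j\geq 1$ is supported in $B_{R_1}\subset B_{R/2}$ where all derivatives of $\varphi_R$ vanish, so mixed $P$--$\varphi_R$ cross terms drop out. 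For $|\nabla(U-U_R)|\leq C_1|\nabla U|$ on $\R^d\setminus B_{R_0}$, I use $P\equiv 0$ and $|\nabla U|=2a|x|$ on this set, and again split by region: $\nabla(U-U_R)=0$ on $B_{R/2}$; on $B_R\setminus B_{R/2}$, $\nabla(U-U_R)=2ax(1-\varphi_R)-a|x|^2\nabla\varphi_R$, giving $|\nabla(U-U_R)|\leq(2+C)a|x|$ via $|x|\leq R$ and $|\nabla\varphi_R|\leq C/R$; $\nabla(U-U_R)=\nabla U$ on $Q_R\setminus B_R$; and on $\R^d\setminus Q_R$, where $|x|>R$ and a direct Leibniz bound yields $|\nabla U_R|=O(R)$, one has $|\nabla(U-U_R)|/|\nabla U|\leq 1+O(R)/(2aR)=O(1)$.

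\textbf{Main obstacle.} The delicate point is the comparison $U_R\leq U$ off the fundamental cube, especially for $|k|=1$: the periodic extension can deposit a copy of the peak of $U_R$ (of size $\sim aR^2$) at a point $x$ where $U$ itself has only just grown to $\sim aR^2$, and the cross term $4Ry\cdot k$ can be as negative as $-4R^2$, leaving no naive buffer for a possibly negative $P(y)$. The saving observation is that $P(y)\neq 0$ forces $|y|\leq R_1$, which tightens the diagonal-plus-cross combination to $\geq 4aR(R-R_1)\geq 2aR^2$; the thresholds $R_0\geq 2R_1$ and $aR_0^2\geq 2\|P\|_\infty$ are tuned precisely to make this margin strictly positive.
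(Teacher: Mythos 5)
Your proof is correct and follows the same basic strategy as the paper (Leibniz rule, the scaling $|\nabla^{j}\varphi_R|\lesssim R^{-j}$, and the quadratic growth of $U$ outside $\supp P$), but it is substantially more complete on one point: the comparison $U_R\leq U$ \emph{off} the fundamental cube. The paper dismisses this as obvious from $|\varphi_R|\leq 1$, which only covers $Q_R$ (and even there implicitly uses $U\geq 0$ where $\varphi_R<1$); for the periodic copies one genuinely needs the expansion $a|x|^2=a|y|^2+4aR^2|k|^2+4aRy\cdot k$ and the observation that $P(y)\neq 0$ forces $|y|\leq R_1$, exactly as you do. Your remark that the inequality $U_R\leq U$ can fail for small $R$ is also accurate, so restricting all three conclusions to $R\geq R_0$ is the right reading of the lemma (and is all that is ever used). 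Likewise, your separate treatment of $\R^d\setminus Q_R$ in the gradient comparison fills in a case the paper's one-line Leibniz computation does not explicitly address.
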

\begin{proof} First, obviously $U_R(x)\leq U(x)$ since $\abs{\varphi_R}\leq 1$. Secondly,
  \begin{equation*}
    \nabla_x U_R(x) = \nabla_x U(x) \varphi_R(x) + \nabla_x \varphi_R(x) U(x).
  \end{equation*}
  Let $R_0$ be so large that $\mathrm{supp}(P) \subset B_{R_0/4}$ and consider $R\geq R_0$. Then $U(x) = a\abs{x}^2$ in
  $\R^d\setminus B_{R_0/2}$, and consequently
  \begin{equation*}
    \abs{\nabla_x U_R(x)} \leq \abs{\nabla_x U(x)} + C\frac{\abs{U(x)}}{R}\leq C_1 \abs{\nabla_xU(x)},
  \end{equation*}
  on $\R^d \setminus B_{R_0}$, where $C_1$ is  a constant which is independent of $R$. The proof for  higher-order derivatives is analogous.
\end{proof}

\begin{lemma}\label{lem:global:energy}
  Let $U$ satisfy the assumption in \cref{assump:global:potential}, and let $U_R$ and $R_0$ be constructed as above and in \cref{lem:global:tech}, respectively. Consider $R\geq R_0$, and let $\ud \eta_{R} := e^{-U_R(x)}\ud x$. Assume that $u=u(x)$ is function which is periodic with period $2R$ on $\R^d$ and satisfies $u\in \L^2_{\eta_R}(Q_R)$. Then $u\in \L^2_\eta(\R^d)$ and
  \begin{equation*}
    \norm{u}_{\L^2_\eta(\R^d)} \leq C\norm{u}_{\L^2_{\eta_R}(Q_R)},
  \end{equation*}
  where $C$ is a constant independent	of $R$.
\end{lemma}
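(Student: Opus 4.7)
The plan is to exploit the $2R$-periodicity of $u$ to rewrite the integral over $\R^d$ as one over the fundamental cell $Q_R$, and then establish a pointwise comparison between the resulting weight and $e^{-U_R}$. Tiling $\R^d$ by translates $Q_R + 2Rk$, $k \in \Z^d$, and applying the periodicity of $u^2$ together with the change of variables $x = y + 2Rk$ in each piece, one obtains
\begin{align*}
\int_{\R^d} u^2(x)\,e^{-U(x)}\,dx = \int_{Q_R} u^2(y)\,W(y)\,dy, \qquad W(y) := \sum_{k \in \Z^d} e^{-U(y+2Rk)}.
\end{align*}
The lemma thus reduces to proving the pointwise bound $W(y) \leq C\,e^{-U_R(y)}$ on $Q_R$, with $C$ independent of $R \geq R_0$ (possibly enlarging $R_0$).

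I would split $W(y)$ into the $k = 0$ term and the tail $k \ne 0$. Since $U_R = U\varphi_R$ with $\varphi_R \in [0,1]$ and $U(x) = a|x|^2 + P(x) \geq -\|P\|_\infty$, one has $U_R(y) - U(y) = -U(y)(1-\varphi_R(y)) \leq \|P\|_\infty$, giving $e^{-U(y)} \leq e^{\|P\|_\infty}\,e^{-U_R(y)}$. For the tail, a coordinate-wise argument using $y \in Q_R = [-R,R]^d$ shows that $|y_i + 2Rk_i| \geq R|k_i|$ whenever $k_i \ne 0$, hence $|y+2Rk|^2 \geq R^2|k|^2$. Enlarging $R_0$ so that $\supp(P) \subset B_{R_0}$, one has $P(y+2Rk) = 0$ for every $k \ne 0$ and $R \geq R_0$, so $U(y+2Rk) = a|y+2Rk|^2 \geq aR^2|k|^2$, and therefore
\begin{align*}
\sum_{k \ne 0} e^{-U(y+2Rk)} \leq \sum_{k \ne 0} e^{-aR^2|k|^2} \leq C_d\,e^{-aR^2}
\end{align*}
for $R$ large, using the product structure $\sum_{k \in \Z^d} e^{-aR^2|k|^2} = \bigl(\sum_{k \in \Z} e^{-aR^2 k^2}\bigr)^d$. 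On the other hand, since $\varphi_R$ is supported in $B_R$ and $|\varphi_R| \leq 1$, we have $|U_R(y)| \leq |U(y)| \leq aR^2 + \|P\|_\infty$ on $Q_R$, so $e^{-U_R(y)} \geq e^{-aR^2-\|P\|_\infty}$. Combining yields $\sum_{k \ne 0} e^{-U(y+2Rk)} \leq C_d\,e^{\|P\|_\infty}\,e^{-U_R(y)}$, and adding the $k=0$ contribution gives $W(y) \leq C\,e^{-U_R(y)}$ with $C = C(d,a,\|P\|_\infty,R_0)$.

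The main obstacle is that both the tail decay $e^{-aR^2}$ and the worst-case growth $e^{aR^2}$ of $e^{U_R(y)}$ live on the same exponential scale $aR^2$: these two scales must cancel exactly in order to leave a constant independent of $R$. This balance is dictated by the choice that $\varphi_R$ be supported in $B_R$ — precisely the scale at which the nearest non-trivial lattice translate $y + 2Rk$ sits. If the cutoff were taken at a larger radius, the worst-case $e^{U_R(y)}$ would overwhelm the Gaussian tail, and the constant would no longer be uniform in $R$.
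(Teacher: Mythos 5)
Your proof is correct and follows essentially the same route as the paper's: unfold the integral over the lattice translates of $Q_R$, bound the $k=0$ term via $U_R\le U+\|P\|_\infty$, and beat the worst-case growth $e^{U_R}\le e^{aR^2+\|P\|_\infty}$ on the cell with the Gaussian tail decay $e^{-aR^2|k|^2}$ of the shifted terms. Your version is if anything slightly more explicit than the paper's about why the two $e^{\pm aR^2}$ scales cancel exactly.
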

\begin{proof} Note that
  \begin{align*}
    \int_{\R^d} \abs{u(x)}^2 e^{-U(x)} \, \ud x =& \sum_{z\in\Z^d} \int_{Q_R} \abs{u(x)}^2 \frac{e^{-U(x+2Rz)}}{e^{-U_R(x)}} e^{-U_R(x)} \ud x \\
    \leq & \norm{u}^2_{\L^2_{\eta_R}(Q_R)}\cdot \sum_{z\in \Z^d} \sup_{x\in Q_R} e^{-(U(x+2Rz)-U_R(x))}.
  \end{align*}
  Using this, and
  \begin{align*}
    \sum_{z\in \Z^d} \sup_{x\in Q_r} e^{-(U(x+2Rz)-U_R(x))} \leq& 1 + \sum_{\substack{z\in \Z^d, \\z\neq 0.}} \sup_{x\in Q_R} e^{-a\abs{x+2Rz}^2+U_R(x)},
  \end{align*}
  we see that it suffices to estimate the sum
  \begin{align*}
    \sum_{\substack{z\in \Z^d, \\z\neq 0.}} \sup_{x\in Q_R} e^{-a\abs{x+2Rz}^2+U_R(x)},
  \end{align*}
  for $R\geq R_0$. Using the radial symmetry and monotonicity of $U(x)$ outside $B_R$ and \cref{assump:global:potential}, we deduce
  \begin{align}\label{eq:global:expsum}
    \notag \sum_{\substack{z\in \Z^d, \\z\neq 0}} \sup_{x\in Q_R} e^{-a\abs{x+2Rz}^2+U_R(x)}
    &\leq
    \sum_{k=1}^\infty \sum_{\substack{z\in \Z^d, \\ k \leq |z| < k+1}} \sup_{x\in Q_R} e^{-a\abs{x+2Rz}^2+U_R(x)}
    \\
    &\leq
    C(P) \sum_{k=1}^\infty k^{d-1} e^{-4aR^2 (k^2-1)}.
  \end{align}
  It is now easily seen that the sum on the right hand side in \cref{eq:global:expsum} is bounded independent of $R$ as long as we choose $R\geq R_0$.
\end{proof}

In the following we enlarge, if necessary,  $R_0$ so that $\mathrm{supp}(g(x,v))\Subset B_{R_0}(0)\times\R^d$ and $\mathrm{supp}(f(t,x,v))\Subset [0,T]\times B_{R_0}(0)\times\R^d$.

Let $\mathcal{L}_R$ denote the operator $\mathcal{L}$ but with $U$ replaced by $U_R$.
By the construction outlined we are thus led, for $R\geq R_0$, to the following family of problems with periodic boundary conditions
\begin{align}\label{ivp4}
  \begin{cases}
    (\partial_t - \mathcal{L}_R)u_R = f & \text{ in } (0,T)\times Q_R\times\R^d,\\
    \hfill u_R = g & \text{ on } \{t=0\}\times Q_R\times \R^d.
  \end{cases}
\end{align}

\begin{corollary}\label{cor:global:urexist}
  Let $U$, $f$, $g$ satisfy  \cref{assump:global:potential} and \cref{assump:global:potential+},  and let $R_0$ be as above. Then, given $R\geq R_0$ there exists a unique smooth weak solution $u_R$ to the problem \cref{ivp4}. Furthermore,
  \begin{multline*}
    \norm{u_R }^2_{\L^2((0,T);\L^2_{\eta_R}(Q_R; \mathrm{H}^{4}_\mu(\R^d)))} + \norm{\nabla_x u_R }^2_{\L^2((0,T);\L^2_{\eta_R}(Q_R;\mathrm{H}^{2}_\mu(\R^d)))}\\
    \leq C\big(\norm{g}^2_{\mathrm{H}^2_{x,v}(Q_R \times \R^d)} + \norm{f}^2_{\mathrm{H}^2_{t,x,v}((0,T)\times Q_R \times \R^d)}\big),
  \end{multline*}
  for a constant $C=C(d,T,\norm{\nabla_x^2 U}_\infty, \norm{\nabla_x^3 U}_\infty,R_0)>0$, independent of $R$.
\end{corollary}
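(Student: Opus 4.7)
The plan is to invoke \cref{Thm1} for the periodic problem \cref{ivp4}, with $Q_R$ viewed as the torus of side $2R$ and the potential taken to be $U_R$, and then to show that the resulting constants can be made independent of $R$ by appealing to the uniform control of the derivatives of $U_R$ supplied by \cref{lem:global:tech}. Concretely, for $R\geq R_0$ the function $U_R$ is smooth and $2R$-periodic, so \cref{Thm1} applies verbatim with $\T^d$ replaced by $Q_R$ and $\eta$ replaced by $\eta_R$; no change in the proof is needed.

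Choose $k=2$ in \cref{Thm1}. Since $g\in C_0^\infty(\R^{2d})$ and $f\in C_0^\infty([0,T]\times\R^{2d})$, the hypotheses \cref{eq:assump:um1} hold trivially for every integer $k\geq 2$. Applying \cref{Thm1} produces a unique weak solution $u_R\in \mathrm{H}^2_{t,x,v}((0,T)\times Q_R\times\R^d)$ together with the bound
\begin{align*}
\norm{u_R}^2_{\mathrm{H}^2_{t,x,v}} \leq C_R \bigl( \norm{g}^2_{\mathrm{H}^2_{x,v}} + \norm{f}^2_{\mathrm{H}^2_{t,x,v}} \bigr),
\end{align*}
with $C_R$ depending a priori on $d,T,\norm{\nabla_x^2 U_R}_\infty,\norm{\nabla_x^3 U_R}_\infty$. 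The pivotal point is that \cref{lem:global:tech} yields $\norm{\nabla_x^j U_R}_\infty \leq C_j$ uniformly in $R\geq R_0$ for every $j\geq 2$, so $C_R$ can be replaced by a constant $C=C(d,T,\norm{\nabla_x^2 U}_\infty,\norm{\nabla_x^3 U}_\infty,R_0)$ independent of $R$.

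To extract the specific norms appearing in the statement, I would use the spectral equivalence $\norm{u}^2_{\mathrm{H}^{2l}_\mu}\asymp \sum_{j=0}^{l} \norm{\nabla_v^{2j} u}^2_{\L^2_\mu}$, which follows directly from \cref{eq:fractional_norm} together with the recurrence relations \cref{eq:Hermiterelations}. Combined with the definition of $\mathrm{H}^2_{t,x,v}$, the indices $(i,j,l)=(0,0,0),(0,0,1),(0,0,2)$ produce the bound on $u_R$ in $\L^2(\L^2_{\eta_R}(Q_R;\mathrm{H}^4_\mu))$, and the indices $(0,1,0),(0,1,1)$ produce the bound on $\nabla_x u_R$ in $\L^2(\L^2_{\eta_R}(Q_R;\mathrm{H}^2_\mu))$. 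Smoothness of $u_R$ I would obtain by iterating the argument: applying \cref{Thm1} for every $k\geq 2$ gives $u_R\in \mathrm{H}^k_{t,x,v}$ for all $k$, and on any fixed compact subset of $(0,T)\times Q_R\times\R^d$ the weights $e^{-U_R(x)}$ and $e^{-\abs{v}^2/2}$ are bounded above and below by positive constants, so the estimates translate into local $H^k$-bounds in the classical sense for every $k$, whence $u_R\in C^\infty$ by standard Sobolev embedding. Uniqueness follows at once from linearity and an energy estimate for the difference of two candidate solutions, which is contained in \cref{Thm1}.

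The principal obstacle is the $R$-independence of the constant: this is not visible from \cref{Thm1} alone, whose constants involve $L^\infty$-norms of derivatives of the potential on a torus that here changes with $R$. It is precisely the structural assumption $U=a\abs{x}^2+P(x)$ in \cref{assump:global:potential} that makes \cref{lem:global:tech} available and rescues the estimate; once that is in place, the corollary is essentially a direct consequence of the periodic result.
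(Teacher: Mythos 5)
Your proposal is correct and follows essentially the same route as the paper: apply \cref{Thm1} (and \cref{thm:regularity}) to the $2R$-periodic problem with potential $U_R$ after periodically extending the compactly supported data $f,g$, observe that the constant depends on $R$ only through $\norm{\nabla_x^2 U_R}_\infty$ and $\norm{\nabla_x^3 U_R}_\infty$, and then invoke \cref{lem:global:tech} to make it $R$-independent. Your additional remarks on extracting the specific $\mathrm{H}^4_\mu$ and $\mathrm{H}^2_\mu$ norms from the $\mathrm{H}^2_{t,x,v}$ bound, and on smoothness via iteration in $k$ plus Sobolev embedding, are details the paper treats implicitly but are consistent with its argument.
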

\begin{proof}
  First of all, by the compact support of $g$ and $f$ we can extend them periodically to all of $\R^{2d}$ and $[0,T] \times \R^{2d}$ respectively, thus applying \cref{Thm1,thm:regularity} we get
  \begin{multline*}
   \norm{u_R }^2_{\L^2((0,T);\L^2_{\eta_R}(Q_R; \mathrm{H}^{4}_\mu(\R^d)))} + \norm{\nabla_x u_R }^2_{\L^2((0,T);\L^2_{\eta_R}(Q_R;\mathrm{H}^{2}_\mu(\R^d)))}\\
   \leq C'\big(\norm{g}^2_{\mathrm{H}^2_{x,v}(Q_R \times \R^d)} + \norm{f}^2_{\mathrm{H}^2_{t,x,v}((0,T)\times Q_R \times \R^d)}\big),
  \end{multline*}
  for a constant $C'=C'(d,T,\norm{\nabla_x^2 U_R}_\infty, \norm{\nabla_x^3 U_R}_\infty)>0$. Retracing our estimates in \cref{thm:regularity}, we remark that $C'$ only depends on $R$ through the norms $\norm{\nabla_x^2 U}_\infty$, $\norm{\nabla_x^3 U}_\infty$. An application of \cref{lem:global:tech} now completes the proof.
\end{proof}

Consequently, combining \cref{lem:global:energy} and  \cref{cor:global:urexist} we see that
\begin{multline}\label{eq:ur:globalbnorm}
  \norm{u_R}^2_{\L^2((0,T);\L^2_\eta(\R^d;\mathrm{H}^4_\mu(\R^d)))} + \norm{\nabla_x u_R}^2_{\L^2((0,T);\L^2_\eta(\R^d;\mathrm{H}^2_\mu(\R^d)))}\\
  \leq C \big(\norm{g}^2_{\mathrm{H}^2_{x,v}(\R^{d}\times\R^d)} + \norm{f}^2_{\mathrm{H}^2_{t,x,v}((0,T)\times\R^{d}\times\R^d)}  \big),
\end{multline}
for a constant $C$ which is independent of $R$. In particular, the family $\{u_R\}$ that is uniformly bounded in the $\mathrm{H}^2_{t,x,v}((0,T)\times\R^d\times\R^d)$ norm and $u_R$ satisfies the equation
\begin{equation*}
  (\partial_t - \mathcal{L}) u_R = \nabla_x(U-U_R)\cdot\nabla_v u_R+ f.
\end{equation*}
Hence,  passing to a subsequence $u_R\rightharpoonup u\in \mathrm{H}^2_{t,x,v}((0,T)\times\R^{d}\times\R^d)$.
Furthermore, since we assume that $f$ and $g$ are smooth, by \cref{thm:regularity} we have $u\in \mathrm{H}^k_{t,x,v}((0,T)\times\R^d\times\R^d)$ for all $k\geq 2$.
In particular,  by the Sobolev embedding theorem we have that  $u$ is smooth. However, this does not necessarily imply that $u$ is in the space $\mathrm{H}^1_\mathrm{kin}((0,T)\times\R^d\times\R^d)$ as the term $\nabla_x U\cdot\nabla_v u$ cannot readily be controlled on $(0,T)\times\R^d\times\R^d$ by the $\mathrm{H}^2_{t,x,v}((0,T)\times\R^d\times\R^d)$ norm.
Therefore, proceeding as in the proof of \cref{cor:l2initial} and \cref{Thm1}, we see that  to conclude $u_R\rightharpoonup u$ in $\mathrm{H}^1_\mathrm{kin}((0,T)\times\R^d\times\R^d)$, and that $u$ is a weak solution to \cref{ivp1}, we have to prove that
\begin{align}\label{claimsu-}
  &\mbox{$\nabla_x(U-U_R)\cdot\nabla_v u_R \rightharpoonup 0$ in $\L^2((0,T);\L^2_\eta(\R^d;\L^2_\mu(\R^d)))$}.
\end{align}
Note, by construction, since $R \geq R_0$ we have that $\supp(P) \subset B_R(0)$. As such, if $(U-U_R)(x)\neq 0$, it holds
\begin{equation*}
  (U-U_R)(x)=a(1-\varphi_R(x))|x|^2.
\end{equation*}
In particular,
\begin{equation*}
  \nabla_x (U-U_R)(x)=2a(1-\varphi_R(x))x-a\nabla_x\varphi_R(x)|x|^2,
\end{equation*}
and obviously, given $x\in\R^d$, we have that
$\nabla_x (U-U_R)(x)\to 0$ pointwise as $R\to\infty$.
Using this, the fact that functions in $\L^2((0,T);\L^2_\eta(\R^d;\L^2_\mu(\R^d)))$ with compact support are dense in $\L^2((0,T);\L^2_\eta(\R^d;\L^2_\mu(\R^d)))$, and the global bound of
$\nabla_v u_R$ in \cref{eq:ur:globalbnorm}, we see that to prove \cref{claimsu-} it suffices to prove that
\begin{align}\label{claimsu}
  &\norm{\nabla_x(U-U_R)\cdot\nabla_v u_R}_{\L^2((0,T);\L^2_\eta(\R^d;\L^2_\mu(\R^d)))}
\end{align}
is uniformly bounded as a function of $R$ if $R\geq R_0$.

To prove the claim in \cref{claimsu} we introduce the measure
\begin{equation*}
  \ud\zeta = Z_\zeta^{-1}\ud\eta\ud\mu,
\end{equation*}
where $Z_\zeta$ is a normalizing factor making $\zeta$ a probability measure on $\R^d\times\R^d$. To complete the argument we will use the following lemma.
\begin{proposition}\label{prop:global:entropy}
  Let $\zeta$ be given as above. Assume that $f(x, v) \in \mathrm{H}^1_\zeta(\R^{2d})$, and that $g(x,v)$ is a function such that
  \begin{equation*}
    \iint_{\R^{2d}} e^{g^2} \, \ud\zeta <\infty.
  \end{equation*}
  Then there exists a constant $C= C(\zeta)$ such that
  \begin{equation*}
    \iint f^2 g^2 \, \ud\zeta \leq \bigl( C \bigl (\|\nabla_x f\|^2_{\L^2_\zeta}+\|\nabla_v f\|^2_{\L^2_\zeta}\bigr ) + 2\|f\|^2_{\L^2_\zeta}\bigr)\iint e^{g^2}\, \ud\zeta.
  \end{equation*}
\end{proposition}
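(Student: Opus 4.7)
The plan is to derive the claimed inequality by combining two classical tools for the probability measure $\zeta$: a logarithmic Sobolev inequality (LSI) on $\R^{2d}$ and the variational (entropic) principle linking entropy to exponential moments.

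First I would verify that $\zeta$ satisfies a LSI on $\R^{2d}$. The velocity factor $\mu$ (after normalization) is the standard Gaussian, so Gross's inequality gives a LSI with an explicit constant. For the spatial factor, the assumption \cref{assump:global:potential} writes $U(x)=a|x|^2+P(x)$ with $P\in C_0^\infty(\R^d)$, so $e^{-U}$ is a bounded (i.e.\ $\sup|P|<\infty$) perturbation of the Gaussian density $e^{-a|x|^2}$; by the Holley--Stroock perturbation principle, the normalization of $\eta$ satisfies a LSI, and by tensorization the product measure $\zeta$ satisfies
\begin{equation*}
\mathrm{Ent}_\zeta(f^2)\ \leq\ C_{\mathrm{LS}}\bigl(\|\nabla_x f\|_{\L^2_\zeta}^2+\|\nabla_v f\|_{\L^2_\zeta}^2\bigr)
\end{equation*}
for all $f\in \mathrm{H}^1_\zeta(\R^{2d})$, where $\mathrm{Ent}_\zeta(h):=\int h\log h\,d\zeta-(\int h\,d\zeta)\log\int h\,d\zeta$.

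Next I would invoke the Donsker--Varadhan variational formula: for any $h\geq 0$ and any measurable $\psi$ with $\int e^\psi\,d\zeta<\infty$,
\begin{equation*}
\iint h\psi\,d\zeta\ \leq\ \mathrm{Ent}_\zeta(h)+\Bigl(\iint h\,d\zeta\Bigr)\log\iint e^{\psi}\,d\zeta,
\end{equation*}
which follows by writing $\psi=(\psi-\log\int e^\psi\,d\zeta)+\log\int e^\psi\,d\zeta$ and applying the normalized Gibbs principle $\int h\varphi\,d\zeta\leq\mathrm{Ent}_\zeta(h)$ valid whenever $\int e^\varphi\,d\zeta\leq 1$. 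Taking $h=f^2$ and $\psi=g^2$ and combining with the LSI produces
\begin{equation*}
\iint f^2 g^2\,d\zeta\ \leq\ C_{\mathrm{LS}}\bigl(\|\nabla_x f\|_{\L^2_\zeta}^2+\|\nabla_v f\|_{\L^2_\zeta}^2\bigr)+\|f\|_{\L^2_\zeta}^2\,\log\iint e^{g^2}\,d\zeta.
\end{equation*}

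Finally, set $M:=\iint e^{g^2}\,d\zeta$. Since $g^2\geq 0$ and $\zeta$ is a probability measure, $M\geq 1$, so $\log M\leq M-1\leq M$ and likewise $1\leq M$. Therefore both terms on the right are bounded by $M$ times the bracket $C_{\mathrm{LS}}(\|\nabla_x f\|^2+\|\nabla_v f\|^2)+\|f\|^2$, which, after relabeling $C:=C_{\mathrm{LS}}$ and absorbing the factor $1<2$ in front of $\|f\|^2$, yields the stated inequality.

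The main obstacle is really Step 1: establishing the joint LSI for $\zeta$. The Gaussian piece is immediate, but the perturbation step requires that $P$ be bounded (the Holley--Stroock hypothesis), which is guaranteed by the compact support assumption in \cref{assump:global:potential}; absent this, one would have to invoke more delicate curvature-dimension or Bakry--\'Emery arguments. Once the LSI is in hand, the remaining steps are entirely algebraic and use only the definition of entropy and the elementary inequality $\log M\leq M$.
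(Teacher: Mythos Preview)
Your proof is correct and follows essentially the same approach as the paper: establish a logarithmic Sobolev inequality for $\zeta$ and combine it with an entropic duality bound. The only cosmetic difference is that the paper first normalizes $\|f\|_{\L^2_\zeta}=1$ and uses the pointwise Young-type inequality $ab\le a\log a + a + e^b$ (then integrates), whereas you invoke the integrated Donsker--Varadhan formula directly; your explicit justification of the LSI via Holley--Stroock and tensorization is more detailed than the paper's bare citation, but the mathematical content is the same.
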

\begin{proof} Note that by homogeneity it suffices to prove the stated inequality assuming that $\|f\|_{\L^2_\zeta}=1$. Given   $a,b\geq 0$, the following inequality is valid, see \cite{ledouxConcentrationMeasureLogarithmic1999a},
  \begin{equation*}
    a b \leq a \log a + a + e^b.
  \end{equation*}
  Using this inequality we see that
  \begin{align*}
    \iint f^2 g^2 \ud\zeta
    &\leq \iint f^2\log(f^2) \ud\zeta + 1 + \iint e^{g^2} \ud\zeta\\
    &\leq \bigl(2\iint f^2\log(f) \ud\zeta + 2\bigr)\iint e^{g^2}\, \ud\zeta,
  \end{align*}
  where we have used the fact that $e^{g^2}\geq 1$ and that $\zeta$ is a probability measure. Finally, applying the logarithmic Sobolev inequality for $\zeta$, see \cite{bakryAnalysisGeometryMarkov2014a}, we see that
  \begin{equation*}
    \iint f^2\log(f) \ud\zeta\leq  C \bigl (\|\nabla_x f\|^2_{\L^2_\zeta}+\|\nabla_v f\|^2_{\L^2_\zeta}\bigr ),
  \end{equation*}
  which completes the proof.
\end{proof}

Now, given $\delta>0$, we have
\begin{equation*}
  \norm{\nabla_x (U-U_R)\cdot\nabla_v u_R}^2_{\L^2_\eta(\R^d;\L^2_\mu(\R^d))} = Z_\zeta \iint_{\R^{2d}} \frac{\abs{\nabla_v u_R}^2}{\delta} \delta \abs{\nabla_x (U-U_R)}^2  \ud\zeta.
\end{equation*}
Letting
\begin{equation*}
  I_{R,\delta}:=\iint_{\R^{2d}} e^{\delta \abs{\nabla_x (U-U_R)}^2} \ud\zeta,
\end{equation*}
and applying \cref{prop:global:entropy} we obtain for a constant $C=C(\zeta) > 1$ that
\begin{align*}
  &
  \norm{\nabla_x (U-U_R)\cdot\nabla_v u_R}^2_{\L^2((0,T);\L^2_\eta(\R^d;\L^2_\mu(\R^d)))}
  \\
  &
  \leq
  \frac{CZ_\zeta}{\delta}\bigl ( \|\nabla_x \nabla_v u_R\|^2_{\L^2((0,T);\L^2_\zeta(\R^{2d}))}
  +
  \|\nabla_v \nabla_v u_R\|^2_{\L^2((0,T);\L^2_\zeta(\R^{2d}))}\bigr )I_{R,\delta}
  \\
  &
  +
  2\frac{Z_\zeta}{\delta}\|\nabla_v u_R\|^2_{\L^2((0,T);\L^2_\zeta(\R^{2d}))}I_{R,\delta}
  .
\end{align*}
Thus by the above and \cref{eq:ur:globalbnorm} we have for a constant $C > 1$ independent of $R$, that
\begin{multline*}
  \norm{\nabla_x (U-U_R)\cdot\nabla_v u_R}^2_{\L^2((0,T);\L^2_\eta(\R^d;\L^2_\mu(\R^d)))}
  \\
  \leq
  C \frac{I_{R,\delta}}{\delta}
  \left (\norm{g}^2_{\mathrm{H}^2_{x,v}(\R^d \times \R^d)} + \norm{f}^2_{\mathrm{H}^2_{t,x,v}((0,T) \times \R^d \times \R^d)} \right ).
\end{multline*}
Furthermore, by \cref{lem:global:tech} we also have $\abs{\nabla_x (U-U_R)}^2 \leq C \abs{\nabla_x U}^2$ in $\R^d \setminus B_{R_0}$ for a constant $C$ independent of $R$. Thus, choosing $\delta$  small and independent of $R$, we can conclude that there exists a constant $C$ independent of $R$ such that
\begin{equation*}
  I_{R,\delta} = \iint_{\R^{2d}} e^{\delta \abs{\nabla_x (U-U_R)}^2} \ud\zeta \leq C.
\end{equation*}
This completes the proof of \cref{claimsu} and \cref{Thm2}.

\bibliography{main}

\end{document}